\newtheorem{thm}{Theorem}[section]
\newcommand{\A}{\mathcal{A}}
\begin{document}
\title[Operator $*$-correspondences]{Operator $*$-correspondences in analysis and geometry}

\author{David Blecher}
\address{Department of Mathematics, 
University of Houston, 
Houston, 
TX 77204-3008,
Texas}
\email{dblecher@math.uh.edu}

\author{Jens Kaad}
\address{Department of Mathematics and Computer Science, 
The University of Southern Denmark, 
Campusvej  55, 
DK-5230 Odense M,
Denmark}
\email{jenskaad@hotmail.com}

\author{Bram Mesland}
\address{Institut f\"ur Analysis, 
Leibniz Universit\"at Hannover, 
Welfengarten 1, 
30167 Hannover, 
Germany}
\email{mesland@math.uni-hannover.de}

\subjclass[2010]{46L07, 58B34}
\keywords{Operator algebras, Operator modules, Involutions, Inner products, Representations, $*$-derivations, Hermitian connections, Pimsner algebras}

\begin{abstract} An operator $*$-algebra is a non-selfadjoint operator algebra with completely isometric involution. We show that any operator $*$-algebra admits a faithful representation on a Hilbert space in such a way that the involution coincides with the operator adjoint up to conjugation by a symmetry. We introduce operator $*$-correspondences as a general class of inner product modules over operator $*$-algebras and prove a similar representation theorem for them. From this we derive the existence of linking operator $*$-algebras for operator $*$-correspondences. We illustrate the relevance of this class of inner product modules by providing numerous examples arising from noncommutative geometry.
\end{abstract}

\maketitle
\tableofcontents
\section*{Introduction}

There is a large literature on (possibly nonselfadjoint) algebras of
operators on a Hilbert space--{\em operator algebras}, and on
modules over such algebras. It has been known for a long time
that to effectively treat many aspects of these objects  one needs
to view them as {\em operator spaces}, that is, treat them as
{\em matrix normed} objects (see e.g.\ \cite{Arvsub,Pis:IOT,BlMe:OMO}). In (non)commutative geometry dense subalgebras of $C^{*}$-algebras, such as differentiable functions on a manifold, arise naturally. Such algebras commonly carry a finer topology that is compatible with the ambient $C^{*}$-algebra, \cite{Con:NCG}.

In recent years, considering such dense $*$-subalgebras as non-selfadjoint operator algebras has offered technical advantages, as it allows one to deal with finer topologies relevant to geometry whilst retaining a close link to Hilbert space representation theory and thus a kinship with $C^{*}$-algebras.  As such, operator algebras carrying a completely isometric involution have made recurrent appearances in the study of the unbounded Kasparov product. These are the {\em operator $*$-algebras}. Moreover, the extra flexibility present in the category of operator modules (compared to Hilbert $C^*$-modules) has allowed for recent advances in representation theory, \cite{CrHi:PCO}.

Here we will introduce and study what seems to be the most relevant class of inner product modules
over such algebras: the {\em operator $*$-correspondences}. We will provide many interesting examples of these from noncommutative differential
geometry.    We will also give abstract characterizations of operator $*$-algebras and operator $*$-correspondences that reflect what is seen 
in some of our examples, and use this to study e.g.\ the linking operator $*$-algebra of an operator $*$-correspondence.
Further motivation for studying operator $*$-correspondences will be given below.

An operator $*$-algebra is an operator algebra with an
involution $\dagger$ making it a
$*$-algebra with
$\Vert [a_{ji}^\dagger ] \Vert = \Vert [a_{ij} ] \Vert$.
Their importance in noncommutative differential
geometry was first discovered by the third author in
his Ph.\ D.\ thesis (see \cite{Mes}),
who was soon joined by the second author with Lesch
\cite{KaLe:Flow}.  
Many 
examples of operator $*$-algebras occur naturally
in noncommutative differential
geometry, as we shall see.   In passing we remark  that in fact when one looks for them
elsewhere one sees that examples of operator $*$-algebras  are fairly common in general operator algebra theory,
but this seems hitherto to have not been noticed.

It is natural to investigate for operator $*$-algebras the appropriate analogue of Hilbert $C^*$-modules, and their
bimodule variant, which  these days are often called {\em $C^*$-correspondences}. The theory of $C^{*}$-correspondences has a central place in $C^{*}$-algebra theory, and goes back to Paschke and Rieffel, \cite{Pas:IPM,Rie:IRA}. Remarkably, a countably generated Hilbert $C^{*}$-module $X$ over a $C^{*}$-algebra $B$ is projective in the strong sense of Kasparov's stabilization theorem \cite{Kas1}. The close relationship between the existence of an inner product on $X$ with values in $B$ and direct summands of an appropriately defined free module has been the basis of the first proposals for generalizing Hilbert $C^*$-modules to the setting of modules over a
nonselfadjoint operator algebra $\C B$, due to the first author (see \cite{Ble:Hilb, Ble:Gen}).

In the case where the nonselfadjoint operator algebra $\C B$ has a contractive approximate identity, the existence of an asymptotic factorization of the module through free finite rank modules $\C B^n$ has played a significant role \cite{Ble:Gen, BMP}, see for example Theorem 3.1 in \cite{Ble:Hilb} or Definition 3.1 in \cite{Ble:Gen} (although assumptions (3) and (4) in that definition were removed in later work of the first author and his coauthors). The existence of such a factorization is equivalent to the existence of a contractive approximate identity in the appropriate abstract operator algebra of so-called ``compact'' operators, plus that the latter algebra acts nondegenerately on the module. For such ``rigged modules'' the abstract operator algebra of compact operators is completely isometrically isomorphic to the corresponding represented algebra of module endomorphisms. 

In \cite{Mes}, the rigged modules of \cite{Ble:Gen} were considered in the context of spectral triples, and inner products with values in a relevant operator $*$-algebra were shown to arise naturally. Various analogues and intrinsic definitions in the general operator $*$-algebra setting have been proposed starting in \cite{KaLe:Flow}, and continuing in several papers of the second two authors and their coauthors. 

In important examples arising in noncommutative differential geometry the aforementioned factorizations through free modules need not exist. The paper \cite{Kaa:SBB} shows that, for operator $*$-algebras associated to Riemannian manifolds, the existence of a factorization of the identity operator is equivalent to a notion of \emph{bounded geometry}. Many natural examples arising in geometry do not possess such bounded geometry.

One such example is the Hopf fibration, which was studied in detail in \cite{BMS}. This study leads to the definition of a class of modules proposed in \cite{MesRen}. Here, factorization of the identity through free modules is no longer present, and the natural representation of the compact operators as module endomorphisms is not completely isometric. However, the closure of the algebra of compact operators inside the module endomorphisms does possess a bounded approximate identity. This setting is broad enough to obtain an existence proof of general unbounded Kasparov products. 

The papers \cite{Kaad:Abs, Kaad:Diff, Kaa:MEU} omit the use of bounded approximate identities altogether, and discuss its consequences for the stabilization theorem, Morita equivalence and twisted unbounded Kasparov products.

In the present paper, we proceed in full generality, not requiring the existence of approximate identities in the definition
of our {\em operator $*$-correspondences}. This has some significant ramifications. In particular we no longer
necessarily have an completely isometric inclusion of the algebra of ``compacts''  inside the completely bounded maps on the module,
nor the usual connection mentioned above with free or finitely generated projective modules. This is somewhat related to the fact that on our inner product modules the inner product does not control the norm, contrary to the Hilbert $C^*$-module case. Another technical obstacle present in the general study of operator spaces (and hence also for operator $*$-correspondences) is that the algebra of completely bounded endomorphisms of an operator space is {\em not} in general an operator algebra (see \cite[Theorem 3.4]{Ble:CBC}). In the paper at hand, we overcome this difficulty by employing the Wittstock-Stinespring representation theorem for completely bounded multilinear maps as developed by Christensen, Effros and Sinclair and by Paulsen and Smith, \cite{ChSi:RCM,ChEfSi:CBM,PaSm:MTO}.

One of the main advantages of our general operator $*$-correspondences is that all of the examples mentioned above fit into the framework of the present paper. Operator $*$-correspondences are less restrictive than Hilbert $C^*$-modules, and yet comprise a categorical framework that is amenable to the same kind of analysis that one does with Hilbert $C^*$-modules, such as direct sums and tensor products. Indeed this is a main reason why one wants to use the matrix norms implicit in our operator $*$-correspondences instead of an equivalent Banach space norm. In a contemporaneous paper of the second author \cite{Kaa:MEU} a notion of Morita equivalence of operator $*$-correspondences in the context of the unbounded Kasparov product is discussed. Several more examples are presented there.

\subsection*{Structure of the paper} In Section 2  we characterize operator $*$-algebras, showing that they arise exactly as those closed subalgebras of $B(H)$ that are $*$-invariant up to conjugation by a symmetry. We point out connections with their generated $C^*$-algebras
such as the $C^*$-envelope.  We also give several examples of
these $*$-algebras, among which those arising from spectral triples,
group $C^*$-algebras, and differentiable manifolds.

  In Section 3 we discuss {\em operator $*$-modules}, define their
``compact operators'', and introduce operator $*$-correspondences.

Section 4 contains a detailed discussion of several examples of operator $*$-correspondences from (non)commutative differential geometry. In particular we show how  operator $*$-correspondences arise
as ``minimal domains'' of hermitian connections. 
For example, differentiable sections of hermitian vector bundles over a Riemannian manifold
with a hermitian connection may be given the structure of an operator $*$-correspondence. Other examples come from fiber bundles
equipped with a Riemannian metric in the fiber direction and a given ``vertical connection'',
$C^1$-versions of crossed products of a Riemannian manifold by a discrete group
related to the construction of the Baum-Connes assembly map, hermitian connections on Hilbert $C^{*}$-modules,
and a special case of the latter involving a differentiable version of the Pimsner algebra associated to a self-Morita equivalence bimodule. All of these examples of operator $*$-correspondences admit a closed non-inner product preserving embedding into an appropriate Hilbert $C^{*}$-module. The inner product coincides with that in the ambient Hilbert $C^{*}$-module \emph{up to conjugation by a symmetry}.
 
In Section 5, we formalize the latter observation and relate it to our characterization result for operator $*$-algebras. For operator $*$-correspondences, we introduce a notion of {\em standard form representation} relative to pairs of standard form representations of the
acting operator $*$-algebras.  We prove that operator $*$-correspondences always admit such a standard form representation, explaining the particular form of the examples in Section 4. Our final and main result relates the standard form of operator $*$-algebras and operator $*$-correspondences directly. We show that, up to completely bounded isomorphism, operator $*$-correspondences are the inner product bimodules of operators on a Hilbert space where the inner product agrees with the pairing $(T, S) \mapsto T^* S$
up to conjugation of $T^*$ by a symmetry. This in turn can be used to construct a linking operator $*$-algebra, from which it follows that operator $*$-correspondences can be alternatively characterized as appropriate ``corners'' of operator $*$-algebras. This is a reprise of the fact that Hilbert $C^*$-modules and $C^*$-correspondences may be viewed as ``corners'' of an appropriate $C^*$-algebra.

\subsection*{Acknowledgements}
We are grateful to the Mathematisches Forschungsinstitut Oberwolfach (MFO) for hosting the miniworkshop on \emph{Operator spaces and Noncommutative Geometry in Interaction} in February 2016. The basic ideas for this paper originate from this mini-workshop. 

The first author was partially supported by the National Science Foundation (NSF).

The second author was partially supported by the Villum Foundation (grant 7423).

The third author was partially supported by Simons Foundation grant 346300, the Polish Government MNiSW 2015-2019 matching fund and enjoyed the hospitality of the Instytut Matematyczny PAN (IMPAN), Warsaw, Poland.

The second and third author gratefully acknowledge the Syddansk Universitet, Odense, Denmark and the Leibniz Universit\"{a}t Hannover, Germany for their financial support in facilitating this collaboration.

This work benefited from various conversations with Magnus Goffeng and Ryszard Nest.

\section{Operator $*$-algebras}

\subsection{Definitions}

Let $H$ be a Hilbert space and let $V \su B(H)$ be a closed subspace of the bounded operators on $H$ (equipped with the operator norm). The $(n \ti n)$-matrices with entries in $V$, $M_n(V)$, can then be identified with a closed subspace of the bounded operators on $H^{\op n}$, the $n$-fold Hilbert space direct sum of $H$ with itself. In this way $M_n(V) \su B(H^{\op n})$ becomes a normed vector space for all $n \in \nn$. The characterizing properties of this countable family of normed vector spaces are contained in the following abstract definition of an operator space:

\begin{dfn}
A vector space $\C X$ over $\cc$ is an \emph{operator space} when it is equipped with a norm $\| \cd \|_{\C X} : M_n(\C X) \to [0,\infty)$ for all $n\in \nn$ such that the following holds:
\begin{enumerate}
\item $\C X \cong M_1(\C X)$ is complete in the norm $\| \cd \|_{\C X} : M_1(\C X) \to [0,\infty)$;
\item For each $n \in \nn$, $\xi \in M_n(\C X)$ and $\la,\mu \in M_n(\cc)$ we have the inequality
\[
\| \la \cd \xi \cd \mu \|_{\C X} \leq \| \la \|_{\cc} \cd \| \xi \|_{\C X} \cd \| \mu \|_{\cc},
\]
where $(\la \cd \xi \cd \mu)_{ij} = \sum_{k,l = 1}^n \la_{ik} \cd \xi_{kl} \cd \mu_{lj}$ for all $i,j \in \{1,\ldots,n\}$ and where the norm $\| \cd \|_{\cc}$ on $M_n(\cc)$ is the unique $C^*$-algebra norm.
\item For each $n,k \in \nn$, $\xi \in M_n(\C X)$ and $\eta \in M_k(\C X)$ we have that
\[
\| \xi \op \eta \|_{\C X} = \max\{ \| \xi \|_{\C X} , \| \eta \|_{\C X} \},
\]
where $\xi \op \eta \in M_{n + k}(\C X)$ refers to the direct sum of matrices.
\end{enumerate}
A linear map $\phi : \C X \to \C Y$ between two operator spaces is \emph{completely bounded} when there exists a constant $C \geq 0$ such that
\[
\| \phi(\xi) \|_{\C Y} \leq C \cd \| \xi \|_{\C X},
\]
for all $\xi \in M_n(\C X)$ and all $n\in \nn$. The \emph{completely bounded norm} of a completely bounded map $\phi : \C X \to \C Y$ is defined by
\[
\| \phi \|_{\T{cb}} := \inf\big\{ C \in [0,\infty) \mid \| \phi(\xi) \|_{\C Y} \leq C \cd \| \xi \|_{\C X} \T{ for all } \xi \in M_n(\C X) \, , \, \, n \in \nn \big\} \, .
\]
We will also refer to $\|\cdot\|_{\T{cb}}$ as the \emph{cb-norm}.
\end{dfn}

It is a fundamental theorem due to Ruan that any abstract operator space $\C X$ is completely isometrically isomorphic to a closed subspace of bounded operators on a Hilbert space $H$, \cite[Theorem 3.1]{Rua:SCA}. Thus, there exist a closed subspace $V \su B(H)$ together with an isometry $\phi : \C X \to V$ which induces an isometry $\phi : M_n(\C X) \to M_n(V)$ for all $n \in \nn$.

\begin{dfn}\label{d:opealg}
An operator space $\C A$ is an \emph{operator algebra} when it comes equipped with a multiplication $\cd : \C A \ti \C A \to \C A$ such that
\begin{enumerate}
\item $\C A$ becomes an algebra over $\cc$;
\item The following estimate holds
\[
\| x \cd y \|_{\C A} \leq \| x \|_{\C A} \cd \|y\|_{\C A} \q \T{for all } n \in \nn \T{ and } x,y \in M_n(\C A),
\]
where $(x \cd y)_{ij} = \sum_{k = 1}^n x_{ik} \cd y_{kj}$ for all $i,j \in \{1,\ldots,n\}$.
\end{enumerate}
We say that two operator algebras $\C A$ and $\C B$ are \emph{cb-isomorphic} when there exists a completely bounded and invertible algebra homomorphism $\phi : \C A \to \C B$ such that $\phi^{-1} : \C B \to \C A$ is completely bounded as well. In this case we say that $\phi : \C A \to \C B$ is a \emph{cb-isomorphism}.
\end{dfn}

The above abstract definition corresponds, up to cb-isomorphism, to concrete objects. Any closed subalgebra $\C B$ of $B(H)$ can be given the structure of an operator algebra, where the algebra structure is induced from $B(H)$ and where the matrix norms $\| \cd \|_{\C B} : M_n(\C B) \to [0,\infty)$, $n \in \nn$, come from the operator norm on $B(H^{\op n})$. It then follows by \cite[Theorem 2.2]{Ble:CBC} that any operator algebra $\C A$ is cb-isomorphic to a closed subalgebra $\C B \su B(H)$ for some Hilbert space $H$. 

This result can be strengthened in the case where the operator algebra $\C A$ has a (two-sided) contractive approximate identity. Indeed, in this case there exists a Hilbert space $H$ and a completely \emph{isometric} algebra homomorphism $\C A \to B(H)$, see \cite[Theorem 3.1]{BRS:COA} for the unital case and \cite[Theorem 2.2]{Rua:CNO} for the approximately unital case.

\begin{remark}
We are in this text deviating slightly from the standard terminology, see for example \cite[Section 2.1]{BlMe:OMO}. By an \emph{operator algebra} we will always understand an operator algebra in the sense of Definition \ref{d:opealg}. In particular, we will \emph{not assume} the existence of a completely isometric homomorphism $\C A \to B(H)$. We will also \emph{not assume} the existence of contractive approximate identities (not even bounded approximate identities). 
\end{remark}

The following definition of an ``operator algebra with involution'' can be found in \cite{Mes}. The terminology is taken from \cite{KaLe:Flow}.

\begin{dfn}
An operator algebra $\C A$ is an \emph{operator $*$-algebra} when it comes equipped with an involution $\da : \C A \to \C A$ such that
\begin{enumerate}
\item $\C A$ becomes a $*$-algebra;
\item The involution is a complete isometry, thus
\[
\| x^\da \|_{\C A} = \| x \|_{\C A} \q \T{for all } n \in \nn \T{ and } x \in M_n(\C A),
\]
where $(x^\da)_{ij} = (x_{ji})^\da$ for all $i,j \in \{1,\ldots,n\}$.
\end{enumerate}
We say that two operator $*$-algebras $\C A$ and $\C B$ are \emph{cb-isomorphic} when there exists a cb-isomorphism $\phi : \C A \to \C B$ of the underlying operator algebras such that $\phi(a^\da) = \phi(a)^\da$ for all $a \in \C A$.
%
\end{dfn}

Any $C^*$-algebra $A$ can be given the structure of an operator $*$-algebra with matrix norms $\| \cd \|_A : M_n(A) \to [0,\infty)$, $n \in \nn$, given by the unique $C^*$-norms on $M_n(A)$, $n \in \nn$. It might be tempting to believe that these are the only examples of operator $*$-algebras. To the contrary, the preliminary collection of examples given below shows that the $C^{*}$-algebras form a proper subclass of the operator $*$-algebras. Operator $*$-algebras can be used to capture refined analytic content arising from interesting geometric situations. More examples will be given in Section \ref{s:opecor}.

\subsection{Examples}

\begin{example}\label{Ex: An}[Analytic elements] Operator $*$-algebras arise in the context of algebras of analytic elements inside a $C^{*}$-algebra $A$. Let $\tau:\mathbb{R}\to \mbox{Aut} (A)$ be a 
strongly continuous $1$-parameter group of $*$-automorphisms of $A$. Consider the closed strip
\[
I:= \big\{ z \in \cc \mid \T{Im}(z) \in [-1,1] \big\} \, ,
\]
and denote by $I^{\circ}$ its interior. We define a $*$-subalgebra $\mathcal{A}\subseteq A$ as follows
\[
a \in \C A \lrar \Big( \exists \T{ continuous } f_a : I \to A \T{ s.t. } f_a : I^\circ \to A \T{ analytic and } f_a(t) = \tau_t(a) \, \, \forall t \in \rr \Big) .
\]
We define the unbounded operators $\tau_+ : \C A \to A$ and $\tau_- : \C A \to A$ by $\tau_{\pm}(a) := f_a(\pm i)$. By the uniqueness of analytic extensions, this definition is unambiguous and it follows moreover that $\tau_{\pm}$ are algebra homomorphisms with $\tau_{\pm}(a^{*})=\tau_{\mp}(a)^{*}$ for all $a \in \C A$.  Moreover, by the Phragm\'en-Lindel\"of theorem we have that 
\begin{equation}\label{eq:phraineq}
\| a \| \leq \sup_{z \in I}\| f_a(z) \| \leq \max \{ \| \tau_+(a) \|, \| \tau_-(a) \| \} \q \forall a \in \C A \, .
\end{equation}
In particular, we have an injective algebra homomorphism
\[ 
\pi:\mathcal{A}\to M_{2}(A),\quad a\mapsto \begin{pmatrix} \tau_{+}(a) & 0 \\ 0 & \tau_{-}(a)\end{pmatrix} \, ,
\]
which is moreover closed as an unbounded operator from $A$ to $M_2(A)$. Indeed, to see that $\pi$ is closed we suppose that we have a sequence $\{ a_n\}$ in $\C A$ such that $a_n \to a$ and $\pi(a_n) \to b$ for some $a \in A$ and $b \in M_2(A)$. From the last inequality of \eqref{eq:phraineq} we then see that the sequence $\{ f_{a_n} \}$ of functions from $I \to A$ converges uniformly to a function $f_a : I \to A$. Clearly, $f_a : I \to A$ is continuous and the restriction $f_a : I^\circ \to A$ is analytic. This shows that $a \in \C A$ and that $\pi(a) = b$ and thus that $\pi$ is closed. Hence $\pi(\mathcal{A})$ is a closed subalgebra of $M_{2}(A)$. The identity
\[
\pi(a^{*})=\begin{pmatrix} \tau_{+}(a^{*}) & 0 \\ 0 & \tau_{-}(a^{*})\end{pmatrix}=\begin{pmatrix} \tau_{-}(a)^{*} & 0 \\ 0 & \tau_{+}(a)^{*}\end{pmatrix} \, ,
\]
then shows that the  $*$-algebra $\mathcal{A}$ becomes an operator $*$-algebra when equipped with the matrix norms
\[
\| a \|_{\C A} := \| \pi(a) \|_{M_2(A)}, \q a \in M_n(\C A) \, .
\]

\end{example}

\begin{example}\label{Ex:der}[General closed derivation]
Let $C$ be a $C^*$-algebra and let $\C B_\de \su C$ be a $*$-subalgebra which comes equipped with a closed derivation $\de : \C B_\de \to C$ with $\de(b^*) = -\de(b)^*$ for all $b \in \C B_\de$. We refer to such a derivation as a closed $*$-derivation. Define the injective algebra homomorphism
\[
\rho : \C B_\de \to M_2(C), \q b \mapsto \ma{cc}{b & 0 \\ \de(b) & b}, 
\]
and remark that
\[
\rho(b^*) = \ma{cc}{0 & i \\ -i & 0} \rho(b)^* \ma{cc}{0 & i \\ -i & 0}.
\]
It follows from this relation that $\C B_\de$ becomes an operator $*$-algebra when equipped with the $*$-algebra structure inherited from $C$ and with the matrix norms defined via $\rho : \C B_\de \to M_2(C)$. This operator $*$-algebra structure is used in the papers \cite{Kaad:Abs,Kaad:Diff} for a range of constructions in noncommutative geometry. The identities
\[\begin{pmatrix} 0 & 0 \\ \delta(a) & 0 \end{pmatrix}=\begin{pmatrix} 0 & 0 \\ 0 & 1 \end{pmatrix}\begin{pmatrix} a & 0 \\ \delta(a) & a \end{pmatrix}\begin{pmatrix} 1 & 0 \\ 0 & 0 \end{pmatrix},\quad \begin{pmatrix} a & 0 \\ 0 & 0 \end{pmatrix}=\begin{pmatrix} 1& 0 \\ 0 & 0 \end{pmatrix}\begin{pmatrix} a & 0 \\ \delta(a) & a \end{pmatrix}\begin{pmatrix} 1 & 0 \\ 0 & 0 \end{pmatrix},\]
show that the Banach $*$-algebra norm on $\C B_\de$ is equivalent to the norm \begin{equation}\label{eq: Banachstar}\|b\|_\de:=\|b\|+\|\delta(b)\|.\end{equation} This norm has been well studied, see for example \cite{BlCu:DNS, Sak:ODS, BrRo:UDC, Con:NCG, Hilsumbordism}.
\end{example}

\begin{example}\label{ex: spec}[Spectral triples]
Noncommutative geometry provides an ample supply of closed $*$-derivations. Recall that an \emph{ungraded (or odd) spectral triple} $(\mathscr{A},H, D)$ consists of a Hilbert space $H$ and a $*$-subalgebra $\mathscr{A}\su B(H)$, together with a selfadjoint unbounded operator $D:\mbox{Dom}(D)\to H$ such that
\begin{enumerate}
\item $a\cd \mbox{Dom}(D)\su \mbox{Dom}(D),$ and $[D,a]:\T{Dom}(D)\to H$ extends to a bounded operator on $H$;
\item $a \cd (i + D)^{-1} : H \to H$ is a compact operator,
\end{enumerate}
for all $a\in\mathscr{A}$, \cite{Con:GSV}. Denote by $A$ the $C^{*}$-algebra obtained as the closure of $\mathscr{A}$ in the operator norm on $B(H)$. For the discussion below, only condition (1) is needed. The map 
\[
\delta:\mathscr{A}\to B(H), \quad a\mapsto \overline{[D,a]}\in B(H),
\]
is a closable $*$-derivation, where $\overline{[D,a]}$ denotes the bounded extension to $H$ of $[D,a] : \T{Dom}(D) \to H$. As in Example \ref{Ex:der} we obtain an operator $*$-algebra $\C A_\de$ by passing to the closure of $\de : \sA \to B(H)$. We refer to this operator $*$-algebra as the {\it minimal} operator $*$-algebra associated to $(\sA,H,D)$.

Alternatively, we may define the $*$-subalgebra $\Lip(\delta) \su A$ as follows: 
\[
\begin{split}
& a \in \Lip(\delta) \lrar  \\
& \q \Big( a \cd \mbox{Dom}(D)\su \mbox{Dom}(D) \T{ and } [D,a]:\T{Dom}(D)\to H \T{ extends to a bounded operator on } H \Big).
\end{split}
\]
The map
\[
\de : \Lip(\de) \to B(H), \quad a \mapsto \overline{[D,a]}\in B(H),
\]
is then a closed $*$-derivation and we thus obtain an operator $*$-algebra structure on $\Lip(\de)$ (as in Example \ref{Ex:der}). We refer to this operator $*$-algebra as the {\it maximal} operator $*$-algebra or the {\it Lipschitz algebra} associated to $(\sA,H,D)$. We remark that $\C A_\de \su \Lip(\de)$ but that equality need \emph{not} hold. Moreover $\Lip(\de)$ need not sit densely inside $B(H)$. 

In the paper \cite{Hilsumbordism}, M. Hilsum introduces the Banach $*$-algebra $\Lip(\de)$ associated to a symmetric operator, using the norm $\|\cdot \|_{\delta}$ defined in Equation \eqref{eq: Banachstar}. Its structure as an operator $*$-algebra has been first used in \cite{Mes} and later in \cite{Kaad:Diff, Kaa:MEU, KaLe:Flow, BMS, MesRen}, in the context of the unbounded Kasparov product.

\end{example}

\begin{example}\label{ex: conbun}[Differentiable manifolds]  Let $M$ be a Riemannian manifold (without boundary), $C^{\infty}_{c}(M)$ the $*$-algebra of smooth compactly supported functions on $M$ and $\Ga_c^\infty(T^*M)$ the $C_c^\infty(M)$-module of smooth compactly supported sections of the cotangent bundle $T^{*}M \to M$. Define the Hilbert $C^*$-module $C_0(M) \op \Ga_0(T^* M)$ over $C_0( M)$ as the completion of $C^\infty_c(M)$-module $C_c^\infty( M) \op \Ga_c^\infty(T^*  M)$ with respect to the inner product 
\[
\binn{\ma{c}{f_0 \\ \om_0 }, \ma{c}{f_1 \\ \om_1}  } := \ov{f_0} \cd f_1 + \inn{\om_0,\om_1}_{T^*  M},
\]
where the pairing $\inn{\cd,\cd}_{T^* M}$ of smooth one-forms comes from the Riemannian metric on $M$. We let $\mbox{End}^*_{C_0(M)}( C_0(M) \op \Ga_0(T^*M))$ denote the $C^*$-algebra of bounded adjointable endomorphisms of the Hilbert $C^*$-module $C_0(M) \op \Ga_0(T^* M)$. The norm on this $C^*$-algebra is the operator norm which we denote by $\| \cd \|_\infty$.

Define an injective algebra homomorphism $\rho : C_c^\infty(M) \to \mbox{End}^{*}_{C_{0}(M)}( C_0(M) \op \Ga_0(T^*M))$ by
\[
\rho(f) \ma{c}{g \\ \om} := \ma{c}{f \cd g \\ df \cd g + f \cd \om}, \q g \in C_0(\C M) \,  , \, \, \om \in \Ga_0(T^* M), 
\]
where $df \in \Ga_c^\infty(T^* \C M)$ denotes the exterior derivative of the smooth compactly supported function $f : \C M \to \cc$.

We define the operator $*$-algebra $C^1_0(M)$ as the completion of $C_c^\infty(M)$ with respect to the matrix norms  
\[
\| f \|_1 := \max\big\{ \| \rho(f) \|_\infty, \, \| \rho(f^\da) \|_\infty \big\}, 
\q f \in M_n(C_c^\infty(M)),
\]
where the involution $\da$ on $C_c^\infty(M)$ (and hence also on $C_0^1( M)$) comes from the point-wise complex conjugation of smooth functions. Notice that $C_0^1(M)$ consists exactly of the continuously differentiable functions on $M$ vanishing at infinity and with exterior derivative vanishing at infinity too. For more details on the operator $*$-algebra $C_0^1( M)$ we refer to \cite[Section 2.3]{KaLe:Flow} and \cite[Proposition 3.4]{Kaa:SBB}. 
\end{example}

\begin{example}\label{ex: group}[Group $C^*$-algebras]
Let $G$ be a discrete countable group and let $\ell^2(G)$ denote the Hilbert space of square summable sequences indexed by $G$. For each $\ga \in G$ we have a unitary operator
\[
U_\ga : \ell^2(G) \to \ell^2(G), \q U_\ga( \de_\tau) := \de_{\ga \cd \tau},
\]
and we recall that the reduced group $C^*$-algebra of $G$, $C_r^*(G)$, is defined as the smallest $C^*$-subalgebra of $B( \ell^2(G))$ such that
\[
U_\ga \in C_r^*(G) \q \T{for all } \ga \in G \, . 
\]

In this context, we record the following two examples of operator $*$-algebras:

\begin{enumerate}
\item Let $d \in \nn$ and let $G := \zz^d$. We define the operator algebra $C_r^{\T{hol}}(\zz^d)$ as the smallest closed subalgebra of the reduced group $C^*$-algebra, $C_r^*(\zz^d)$, such that 
\[
U_\ga \in C_r^{\T{hol}}(\zz^d) \q \T{for all } \ga \in \big( \nn \cup \{0\} \big)^d \, .
\]
We define the selfadjoint unitary operator
\[
W : \ell^2(\zz^d) \to \ell^2(\zz^d), \q W( \de_{ (n_1,\ldots,n_d)}) := \de_{ (-n_1,\ldots,-n_d)} \, .
\]
We then have that
\[
W U_\ga^* W = U_\ga \q \T{for all } \ga \in \big( \nn \cup \{0\} \big)^d \, ,
\]
where the $*$ refers to the involution in $C_r^*(\zz^d)$. We thus have a well-defined completely isometric involution
\[
\da : C_r^{\T{hol}}(\zz^d) \to C_r^{\T{hol}}(\zz^d), \q x^\da := W x^* W,
\]
and it follows that $C_r^{\T{hol}}(\zz^d)$ is an operator $*$-algebra.

We remark that $C_r^*(\zz^d)$ is isomorphic as a $C^*$-algebra to the continuous functions on the $d$-torus, $C(\mathbb{T}^d)$, via the isomorphism $U_{(n_1,\ldots,n_d)} \mapsto z_1^{n_1} \clc z_d^{n_d}$, where $z_j : \mathbb{T}^d \to \mathbb{T} \su \cc$ denotes the projection onto the $j^{\T{th}}$ factor of the cartesian product. Under this isomorphism $C_r^{\T{hol}}(\zz^d)$ corresponds to the continuous functions $f : \mathbb{D}^d \to \cc$ on the closed poly-disc that are holomorphic on the open poly-disc $( \mathbb{D}^\ci )^d$. In this picture, the involution is given by $f^{\dag}(z)=\overline{f(\overline{z})}$.
\item Let $d \in \nn$ and let $G := \mathbb{F}_d$ be the free group on $d$ generators $v_1,\ldots,v_d \in \mathbb{F}_d$. We define the operator algebra $C_r^{\T{hol}}(\mathbb{F}_d)$ as the smallest closed subalgebra of the reduced group $C^*$-algebra, $C_r^*(\mathbb{F}_d)$, such that
\[
U_{v_j} \in C_r^{\T{hol}}(\mathbb{F}_d) \q \T{for all } j \in \{1,\ldots,d\} \, .
\]
By the universal property of $\mathbb{F}_{d}$, the map
\[
\si:\{v_1,\cdots, v_d\}\to \mathbb{F}_d,\q \si( v_j) := v_j^{-1},
\]
uniquely extends to a group homomorphism $\si:\mathbb{F}_d\to \mathbb{F}_d$. Clearly, $\sigma$ is an automorphism satisfying $\sigma^{2}=\mbox{id}_{\mathbb{F}_{d}}$.

We emphasize that $\si$ does \emph{not} agree with the inverse operation $\ga \mapsto \ga^{-1}$. 
 
 Let $W : \ell^2(\mathbb{F}_d) \to \ell^2(\mathbb{F}_d)$, $W(\de_\ga) := \de_{\si(\ga)}$, denote the associated selfadjoint unitary operator. We then have that
\[
W U_{v_j}^* W \de_{\tau} = W U_{v_j^{-1}} \de_{\si(\tau)} = \de_{\si\big( v_j^{-1} \cd \si(\tau) \big)}  = \de_{v_j \cd \tau} \, ,
\]
for all $\tau \in \mathbb{F}_d$ and $j \in \{1,\ldots,d\}$. But this implies that $W U_{v_j}^* W = U_{v_j}$ and hence that
\[
\da : C_r^{\T{hol}}(\mathbb{F}_d) \to C_r^{\T{hol}}(\mathbb{F}_d), \q x^\da := W x^* W,
\]
is a well-defined completely isometric involution on $C_r^{\T{hol}}(\mathbb{F}_d)$. We conclude that $C_r^{\T{hol}}(\mathbb{F}_d)$ is an operator $*$-algebra.
\end{enumerate}

\end{example} 

\subsection{The standard form of operator $*$-algebras}

We now show that a large class of operator $*$-algebras can be realized in a completely isometric way as concrete operator algebras in which the involution agrees with the involution in the ambient $C^{*}$-algebra up to conjugation by a symmetry (a selfadjoint unitary). First we make the following general observation.
\medskip
\begin{prop}\label{p:invo}
Let $\A$ be an operator $*$-algebra. Then there exist a Hilbert space $H$, a closed subalgebra $\C B \su B(H)$ and a symmetry $u \in B(H)$ together with a completely bounded isomorphism of operator algebras $\pi : \C A \to \C B$ such that
\[
\pi(a)^* = u \cd \pi(a^\da) \cd u \q \T{for all } a \in \C A,
\]
as an identity in $B(H)$.
\end{prop}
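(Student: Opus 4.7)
The plan is to exhibit the desired representation by a direct "doubling" construction. By the cited theorem of Blecher (\cite[Theorem 2.2]{Ble:CBC}) which the authors have just recalled, there exists a completely bounded isomorphism $\rho : \mathcal{A} \to \rho(\mathcal{A}) \subseteq B(H_0)$ onto a closed subalgebra of $B(H_0)$ for some Hilbert space $H_0$. The key idea is that from $\rho$ one can build a second homomorphism encoding the involution, and sew the two together so that conjugation by the coordinate flip produces the $*$-operation of $B(H)$.

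Concretely, I define $\sigma : \mathcal{A} \to B(H_0)$ by $\sigma(a) := \rho(a^\dagger)^*$. Using that $\dagger$ is an anti-multiplicative involution, one checks $\sigma(ab) = \rho(b^\dagger a^\dagger)^* = \rho(a^\dagger)^* \rho(b^\dagger)^* = \sigma(a) \sigma(b)$, so $\sigma$ is an algebra homomorphism. Since $\dagger$ is a complete isometry on $\mathcal{A}$ and the operator adjoint is a complete isometry $M_n(B(H_0)) \to M_n(B(H_0))$ after a transpose at the matrix level, $\sigma$ inherits complete boundedness from $\rho$; in fact $\|\sigma\|_{\mathrm{cb}} \leq \|\rho\|_{\mathrm{cb}}$. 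With $H := H_0 \oplus H_0$, I then set
\[
\pi : \mathcal{A} \to B(H), \qquad \pi(a) := \begin{pmatrix} \rho(a) & 0 \\ 0 & \sigma(a) \end{pmatrix},
\qquad u := \begin{pmatrix} 0 & I_{H_0} \\ I_{H_0} & 0 \end{pmatrix}.
\]
Evidently $u \in B(H)$ is a symmetry, and a direct diagonal computation gives $u \cdot \pi(a^\dagger) \cdot u = \mathrm{diag}(\sigma(a^\dagger), \rho(a^\dagger)) = \mathrm{diag}(\rho(a)^*, \sigma(a)^*) = \pi(a)^*$, which is the required identity.

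It remains to verify that $\pi$ is a completely bounded isomorphism onto a closed subalgebra of $B(H)$. Complete boundedness of $\pi$ is immediate from that of $\rho$ and $\sigma$, since block-diagonal assembly does not increase matrix norms. For the inverse, observe that the first diagonal entry of $\pi(a)$ recovers $\rho(a)$, from which $a$ is recovered via the completely bounded $\rho^{-1}$; thus $\pi$ is completely bounded below. Closedness of the image $\pi(\mathcal{A})$ follows because if $\pi(a_n) \to T$ in $B(H)$, the upper-left block converges in $\rho(\mathcal{A})$ (which is closed), hence $a_n$ converges to some $a \in \mathcal{A}$ by boundedness of $\rho^{-1}$, and continuity of $\pi$ gives $T = \pi(a)$.

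No step here is technically deep — the only mild subtlety is confirming that the map $a \mapsto \rho(a^\dagger)^*$ is completely bounded, which requires one to track how the matrix-level adjoint interacts with the entrywise dagger (the transpose that appears when adjointing a matrix of operators is precisely compensated by the transpose built into the definition of $\dagger$ on $M_n(\mathcal{A})$, as in Definition of operator $*$-algebras). Once this observation is made, the rest is a bookkeeping exercise with $2 \times 2$ block matrices.
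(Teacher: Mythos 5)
Your construction is exactly the one in the paper: the authors also invoke \cite[Theorem 2.2]{Ble:CBC} to get $\rho$, set $\pi(a) := \mathrm{diag}(\rho(a), \rho(a^\da)^*)$ on $H_0 \op H_0$, and take $u$ to be the coordinate flip. Your write-up simply fills in the routine verifications (multiplicativity of $a \mapsto \rho(a^\da)^*$, complete boundedness, closedness of the image) that the paper leaves implicit, and these are all correct.
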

\begin{proof}
By \cite[Theorem 2.2]{Ble:CBC} there exist a Hilbert space $H_0$, a closed subalgebra $\C B' \su B(H_0)$ and a completely bounded isomorphism of operator algebras $\rho : \C A \to \C B'$. We define $\pi : \C A \to B(H_0 \op H_0)$ by $\pi(a) := \ma{cc}{\rho(a) & 0 \\ 0 & \rho(a^\da)^*}$, $\C B := \T{Im}(\pi)$ and $u := \ma{cc}{0 & 1 \\ 1 & 0}$.
\end{proof}

\begin{dfn}
A triple $(H,u,\pi)$ satisfying the conclusion of Proposition \ref{p:invo} is called a \emph{standard form representation} of the operator $*$-algebra $\C A$.
\end{dfn}

We now further investigate the completely isometric theory of standard forms.
\medskip

{\em For the remainder of this section we will consider a fixed operator algebra $\A$ and we will assume the existence of a completely isometric algebra homomorphism $\rho : \A \to B(H_0)$ for some Hilbert space $H_0$.}
\medskip

Under this condition we have a completely isometric version of Proposition \ref{p:invo}.
\begin{prop}\label{p:invo1}
Suppose that $\A$ is an operator $*$-algebra with completely isometric involution $\da : \A \to \A$. Then there exist a Hilbert space $H$, a completely isometric algebra homomorphism $\pi : \A \to B(H)$ and a symmetry $u \in B(H)$ such that
\[
\pi(a)^* = u \cd \pi(a^\da) \cd u \q \T{for all } a \in \A.
\]
\end{prop}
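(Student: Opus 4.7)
The plan is to essentially reuse the construction of the proof of Proposition \ref{p:invo}, only this time starting from the completely isometric representation $\rho : \A \to B(H_0)$ guaranteed by our standing hypothesis, and then to verify that the resulting representation $\pi$ is not merely completely bounded but completely isometric.

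Concretely, I would set $H := H_0 \oplus H_0$, take
\[
\pi : \A \to B(H), \qquad \pi(a) := \begin{pmatrix} \rho(a) & 0 \\ 0 & \rho(a^\da)^* \end{pmatrix},
\]
and let $u := \begin{pmatrix} 0 & 1 \\ 1 & 0 \end{pmatrix} \in B(H)$, which is clearly a symmetry. The map $\pi$ is an algebra homomorphism because $a \mapsto \rho(a)$ is, while $a \mapsto \rho(a^\da)^*$ is an algebra homomorphism precisely because $\da$ reverses the order of multiplication and $*$ does as well. The intertwining identity $\pi(a)^* = u \cd \pi(a^\da) \cd u$ is then a direct two-by-two matrix calculation using $a^{\da\da} = a$, exactly as in Proposition \ref{p:invo}.

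The only new content is checking that $\pi$ is completely isometric. For $[a_{ij}] \in M_n(\A)$, after reordering summands the matrix $[\pi(a_{ij})] \in M_n(B(H_0 \oplus H_0))$ acts on $H_0^n \oplus H_0^n$ as the block-diagonal operator with blocks $[\rho(a_{ij})]$ and $[\rho(a_{ij}^\da)^*]$. Hence
\[
\| [\pi(a_{ij})] \| = \max \bigl\{ \| [\rho(a_{ij})] \|, \, \| [\rho(a_{ij}^\da)^*] \| \bigr\}.
\]
Since $\rho$ is completely isometric, the first term equals $\|[a_{ij}]\|_\A$. For the second, taking the Hilbert space adjoint replaces $[\rho(a_{ij}^\da)^*]$ by its transpose $[\rho(a_{ji}^\da)]$, so its norm equals $\|[a_{ji}^\da]\|_\A$; and by the defining property of the operator $*$-algebra involution, $\|[a_{ji}^\da]\|_\A = \|[a_{ij}]\|_\A$. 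Thus both blocks have the same norm and $\|[\pi(a_{ij})]\| = \|[a_{ij}]\|_\A$, as required.

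There is essentially no deep obstacle here; the statement is really a completely isometric refinement of Proposition \ref{p:invo}. The only point that requires a little care is the reordering argument used to identify the norm of $[\pi(a_{ij})] \in M_n(B(H_0\oplus H_0))$ with the maximum of the norms of two $n \times n$ blocks, together with the observation that passing to adjoints exchanges row and column indices, so that the complete isometry of the involution on $\A$ is precisely what is needed to conclude.
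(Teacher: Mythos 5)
Your construction is exactly the one the paper uses (same $H = H_0 \oplus H_0$, same block-diagonal $\pi$, same flip symmetry $u$), and your verification that $\pi$ is completely isometric — via the block-diagonal norm formula, passing to adjoints to transpose indices, and invoking the complete isometry of $\da$ — is correct; the paper simply omits this routine check. No issues.
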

\begin{proof}
Define $\pi : \A \to B(H_0 \op H_0)$ by $\pi(a) = \ma{cc}{\rho(a) & 0 \\ 0 & \rho(a^\da)^*}$ and let $u := \ma{cc}{0 & 1 \\ 1 & 0}$.
\end{proof}

Recall that the \emph{adjoint} $\A^*$ of the operator algebra $\A$ is defined by
\[
\A^* := \{ a^* \mid a \in \A \}.
\] 
The adjoint $\A^*$ becomes an operator algebra when equipped with the algebra structure
\[
a^* \cd b^* := (b \cd a)^* ,\q  \la \cd a^* + \mu \cd b^* = (\ov \la \cd a + \ov \mu \cd b)^*,
\]
and with the matrix norms $\| \cd \|_{\A^*} : M_n(\A^*) \to [0,\infty)$, $\| a^* \|_{\A^*} := \| a \|_{\A}$, where $(a^*)_{ij} = (a_{ji})^*$ for all $i,j \in \{1,\ldots,n\}$.

For any completely isometric algebra homomorphism $\pi : \A \to B$, where $B$ is a $C^*$-algebra we obtain a completely isometric algebra homomorphism $\pi : \A^* \to B$ defined by $\pi(a^*) := \pi(a)^*$.

\begin{prop}\label{p:invo2}
Suppose that $\A$ is an operator $*$-algebra with completely isometric involution $\da : \A \to \A$. Then there exist a $C^*$-algebra $B$, a completely isometric algebra homomorphism $\pi : \A \to B$ and an order two $*$-automorphism $\si : B \to B$ such that 
\[
\pi(\A^*) = \si\big( \pi(\A) \big) \q \T{and} \q \si( \pi(a^\da)) = \pi(a^*).
\]
\end{prop}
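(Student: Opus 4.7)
My plan is to build the $C^*$-algebra $B$ as the one generated inside $B(H)$ by the image of the completely isometric representation furnished by Proposition \ref{p:invo1}, and to take $\si$ to be conjugation by the symmetry provided there. The point is that Proposition \ref{p:invo1} already encodes precisely the information needed to make this work, so the argument is essentially a verification.

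Concretely, first I would apply Proposition \ref{p:invo1} to obtain a Hilbert space $H$, a completely isometric algebra homomorphism $\pi : \A \to B(H)$, and a symmetry $u \in B(H)$ satisfying
\[
\pi(a)^* \;=\; u \cd \pi(a^\da) \cd u \q \T{for all } a \in \A.
\]
Define $B \su B(H)$ to be the $C^*$-algebra generated by $\pi(\A)$; by construction $B$ contains $\pi(\A)^* = \pi(\A^*)$, and $\pi : \A \to B$ remains a completely isometric algebra homomorphism. Set $\si := \T{Ad}(u)$, which is an order-two $*$-automorphism of $B(H)$ since $u$ is a self-adjoint unitary.

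The key step is then to check that $B$ is $\si$-invariant. Using the displayed identity twice (once for $a$ and once for $a^\da$), we compute
\[
\si(\pi(a)) \;=\; u \cd \pi(a) \cd u \;=\; \pi(a^\da)^* \in \pi(\A)^*, \qquad \si(\pi(a)^*) \;=\; u \cd \pi(a)^* \cd u \;=\; \pi(a^\da) \in \pi(\A).
\]
Thus $\si$ carries the generating set $\pi(\A) \cup \pi(\A)^*$ of $B$ into itself, hence preserves the $*$-algebra it generates, and by norm continuity restricts to an order-two $*$-automorphism $\si : B \to B$.

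The two required identities are then immediate from the same calculations: $\si\big(\pi(a^\da)\big) = u \cd \pi(a^\da) \cd u = \pi(a)^* = \pi(a^*)$, using the extension convention $\pi(a^*) := \pi(a)^*$ for $\pi$ on $\A^*$; and consequently $\si(\pi(\A)) = \pi(\A)^* = \pi(\A^*)$. There is no genuine obstacle in the argument, since all the analytic content has already been absorbed into Proposition \ref{p:invo1}; what remains is to package conjugation by $u$ as an automorphism of the generated $C^*$-algebra rather than merely of $B(H)$.
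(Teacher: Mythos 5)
Your proof is correct and follows essentially the same route as the paper: apply Proposition \ref{p:invo1} and take $\si$ to be conjugation by the symmetry $u$. The only difference is that the paper simply sets $B := B(H)$ (so no invariance check is needed), whereas you take the $C^*$-algebra generated by $\pi(\A)$ and verify, correctly, that $\mathrm{Ad}(u)$ preserves it.
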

\begin{proof}
This follows by Proposition \ref{p:invo1} by putting $B := B(H)$ and $\si(b) := u\cd b \cd u$ for all $b \in B$.
\end{proof}

Conversely, we have the following:

\begin{prop}
Suppose that $B$ is a $C^*$-algebra, $\si : B \to B$ is an order two $*$-automorphism and $\pi : \C A \to B$ is a completely isometric algebra homomorphism such that $\pi(\C A^*) = \si\big( \pi(\A) \big)$. Then $\C A$ becomes an operator $*$-algebra with completely isometric involution $\da : \C A \to \C A$ defined by $\pi(a^\da) := (\si \ci \pi)(a^*)$ for all $a \in \A$. 
\end{prop}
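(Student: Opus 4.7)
The plan is to verify in turn that the prescription $\pi(a^\dagger) := \sigma(\pi(a^*))$ (i) unambiguously defines an element $a^\dagger \in \mathcal{A}$, (ii) yields a conjugate-linear anti-multiplicative order-two map, and (iii) is completely isometric on each matrix level. Each step will be reduced, via injectivity of $\pi$, to an identity inside the $C^*$-algebra $B$, where $\sigma$ can be manipulated freely as a $*$-automorphism.

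\textbf{Step 1 (well-definedness).} First I would observe that since $\sigma^2 = \mathrm{id}_B$, the assumed equality $\pi(\mathcal{A}^*) = \sigma(\pi(\mathcal{A}))$ is equivalent to $\sigma(\pi(\mathcal{A}^*)) = \pi(\mathcal{A})$. Given $a \in \mathcal{A}$ one has $\pi(a^*) = \pi(a)^* \in \pi(\mathcal{A}^*)$, hence $\sigma(\pi(a^*)) \in \pi(\mathcal{A})$. Because $\pi$ is completely isometric, it is in particular injective, so there is a unique element of $\mathcal{A}$, which we name $a^\dagger$, satisfying $\pi(a^\dagger) = \sigma(\pi(a^*))$.

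\textbf{Step 2 (involution properties).} For all three algebraic identities I would apply $\pi$ to both sides and verify equality in $B$, invoking injectivity of $\pi$ to conclude. Conjugate-linearity of $\dagger$ follows from linearity of $\sigma$ combined with the conjugate-linearity of $x \mapsto x^*$. For the anti-multiplicativity $(ab)^\dagger = b^\dagger a^\dagger$, I would compute
\[
\pi((ab)^\dagger) = \sigma(\pi(ab)^*) = \sigma(\pi(b)^*\pi(a)^*) = \sigma(\pi(b^*))\sigma(\pi(a^*)) = \pi(b^\dagger)\pi(a^\dagger) = \pi(b^\dagger a^\dagger).
\]
For the order-two property $(a^\dagger)^\dagger = a$, I would use that $\sigma$ is a $*$-automorphism and that $\sigma^2 = \mathrm{id}_B$:
\[
\pi((a^\dagger)^\dagger) = \sigma(\pi(a^\dagger)^*) = \sigma(\sigma(\pi(a)^*)^*) = \sigma^2(\pi(a)^{**}) = \pi(a).
\]

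\textbf{Step 3 (complete isometry).} This is the step that requires the most bookkeeping with indices, though no real obstacle. For $a = (a_{ij}) \in M_n(\mathcal{A})$, the definition $(a^\dagger)_{ij} = (a_{ji})^\dagger$ combined with the formula $(\pi(a)^*)_{ij} = \pi(a_{ji})^*$ gives
\[
(M_n \pi)(a^\dagger)_{ij} = \pi((a_{ji})^\dagger) = \sigma(\pi(a_{ji})^*) = (M_n\sigma)\big((M_n\pi)(a)^*\big)_{ij}.
\]
Since $\sigma$ is a $*$-automorphism of $B$, $M_n\sigma$ is a $*$-automorphism of $M_n(B)$, hence isometric; together with the isometric property of the adjoint on $M_n(B)$ and the complete isometry of $\pi$, this yields
\[
\|a^\dagger\|_{M_n(\mathcal{A})} = \|(M_n\pi)(a^\dagger)\|_{M_n(B)} = \|(M_n\pi)(a)\|_{M_n(B)} = \|a\|_{M_n(\mathcal{A})}.
\]
No step presents a serious difficulty; the only point requiring care is the index juggling in Step 3 to ensure that the transpose built into $(a^\dagger)_{ij}$ matches the transpose built into the adjoint of a matrix with entries in $B$, which it does automatically.
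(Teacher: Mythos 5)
Your proof is correct and complete. The paper states this proposition without any proof, treating it as a routine converse verification, and your three steps (well-definedness via $\sigma^2=\mathrm{id}_B$ and injectivity of $\pi$, the $*$-algebra axioms checked inside $B$, and the matrix-level isometry using that $M_n\sigma$ and the adjoint are isometric on $M_n(B)$) supply exactly the argument the authors evidently had in mind.
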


We are now ready to show that the $C^{*}$-algebra $B$ in Proposition \ref{p:invo2} may be chosen in a universal way. We recall some universal constructions.

\begin{lemma}\label{unit} 
If $\A$ is a non-unital operator $*$-algebra with completely isometric involution $\da : \A \to \A$, then the unitalization $\A^1$ is also an operator $*$-algebra with completely isometric involution $(a,\la)^\da := (a^\da, \ov \la)$.  
\end{lemma}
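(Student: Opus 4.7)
The plan is to promote the standard form representation of $\A$ supplied by Proposition \ref{p:invo} to one for the unitalization $\A^1$, and then to read off the operator $*$-algebra structure on $\A^1$ from that representation.

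Applying Proposition \ref{p:invo} one obtains a Hilbert space $H$, a cb-isomorphism of operator algebras $\pi : \A \to \C B \su B(H)$ onto a closed subalgebra, and a symmetry $u \in B(H)$ with $\pi(a)^* = u \cd \pi(a^\da) \cd u$ for every $a \in \A$. I then define
\[
\tilde \pi : \A^1 \to B(H \op \cc), \q (a,\la) \mapsto \ma{cc}{\pi(a) + \la \cd 1_H & 0 \\ 0 & \la}.
\]
A direct expansion using $(\pi(a) + \la)(\pi(b) + \mu) = \pi(ab) + \la \pi(b) + \mu \pi(a) + \la \mu$ shows that $\tilde \pi$ is an algebra homomorphism. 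It is injective because its $(2,2)$-entry recovers $\la$ and then injectivity of $\pi$ recovers $a$. The image is closed in $B(H\op\cc)$: if $\tilde \pi(a_n, \la_n)$ is Cauchy, then $\la_n$ converges to some $\mu \in \cc$, hence $\pi(a_n)$ is Cauchy in $B(H)$, and closedness of $\pi(\A)$ furnishes $a \in \A$ with $\tilde \pi(a_n, \la_n) \to \tilde \pi(a, \mu)$. The matrix norms on $\A^1$ are now defined by $\| x \|_{\A^1} := \| \tilde \pi_n(x) \|$ for $x \in M_n(\A^1)$, and all operator algebra axioms transfer from $B(H \op \cc)$ through this embedding.

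For the involution, the key computation is
\[
(u \op 1) \cd \tilde \pi(a^\da, \ov \la) \cd (u \op 1)
= \ma{cc}{u \pi(a^\da) u + \ov \la \cd 1_H & 0 \\ 0 & \ov \la}
= \ma{cc}{\pi(a)^* + \ov \la \cd 1_H & 0 \\ 0 & \ov \la}
= \tilde \pi(a,\la)^*,
\]
where $(u \op 1)^2 = 1$ and the standard form identity $u\pi(a^\da)u=\pi(a)^*$ for $\pi$ have been invoked. Applying this relation entrywise on matrices and conjugating by the block-diagonal symmetry $V_n := \T{diag}(u \op 1,\ldots, u \op 1)$ on $(H \op \cc)^{\op n}$ yields $\tilde \pi_n(x^\da) = V_n \cd \tilde \pi_n(x)^* \cd V_n$ for every $x \in M_n(\A^1)$. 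Since conjugation by a symmetry and taking operator adjoints are isometries, this forces $\| x^\da \|_{\A^1} = \| x \|_{\A^1}$, so that $\da$ is a complete isometry; the remaining $*$-algebra identities follow mechanically from the formula $(a,\la)^\da = (a^\da, \ov \la)$ and the corresponding properties of $\da$ on $\A$.

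The one delicate point is the separation of the formal unit from $\pi(\A)$: since $\A$ is non-unital one cannot rule out a priori that $\pi(\A)$ already contains multiples of $1_H$, which would destroy injectivity of the naive map $(a,\la) \mapsto \pi(a) + \la \cd 1_H$ and hence would not define a norm. The auxiliary scalar coordinate in the direct summand $\cc$ resolves this, because its value records $\la$ independently of whatever may lie inside $\pi(\A)$. Once this embedding is in place, algebra structure, completeness, and complete isometry of the involution are all routine verifications.
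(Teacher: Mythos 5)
There is a genuine gap here: you establish that the involution is completely isometric only for an ad hoc operator space structure on $\A^1$ that you manufacture from a chosen representation, not for the unitalization norm itself. You invoke Proposition \ref{p:invo}, which produces a representation $\pi$ that is merely a cb-isomorphism onto its image; consequently the matrix norms $\|(a,\la)\|_{\A^1} := \|\tilde\pi(a,\la)\|$ you introduce do not even restrict to the given matrix norms on $\A$ (at $\la = 0$ you recover $\|\pi(a)\|$, which is only equivalent, not equal, to $\|a\|_{\A}$). Since ``completely isometric'' is not a cb-invariant notion, proving that $\da$ is a complete isometry for your norms says nothing about whether it is one for the canonical norms on the unitalization, which is what the lemma asserts and what the subsequent completely isometric theory (e.g.\ the $C^*$-envelope discussion) requires. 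The repair is exactly what the paper does: under the standing assumption of this subsection one may take $\pi$ completely isometric and implementing the involution (Proposition \ref{p:invo1} rather than \ref{p:invo}), and then Meyer's theorem \cite[Theorem 3.1]{Meyerunit} identifies $\|\pi(a) + \la \cd 1_H\|$ with the unitalization norm, independently of the choice of completely isometric $\pi$; after that, your conjugation computation $\pi(a^\da) + \ov\la \cd 1_H = u(\pi(a) + \la \cd 1_H)^* u$ closes the argument. In other words, the two nontrivial inputs the paper leans on --- Meyer's unitalization theorem and the completely isometric standard form --- are precisely the ones your proof omits.

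Two smaller points. Your worry that $\pi(\A)$ might contain nonzero multiples of $1_H$ is vacuous: $\pi$ is an injective homomorphism and $\A$ is non-unital, so $\la \cd 1_H \in \pi(\A)$ with $\la \neq 0$ would produce a unit in $\A$. The genuine danger is rather that $(a,\la) \mapsto \pi(a) + \la \cd 1_H$ fails to be bounded below or to have closed range, and your auxiliary $\cc$-summand does address that; moreover, once $\pi$ is taken completely isometric, that summand is harmless, because the character $(a,\la) \mapsto \la$ is contractive for the unitalization norm (apply Meyer's theorem to $\pi \op 0$ on $H \op \cc$), so $\max\{\|\pi(a)+\la \cd 1_H\|, |\la|\} = \|\pi(a)+\la \cd 1_H\|$. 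With those adjustments your computation becomes a correct, if more laborious, version of the paper's two-line proof.
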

\begin{proof}  
We recall that the map $\A^1 \to B(G)$, $(a,\la) \mapsto \pi(a) + \la \cd 1_G$, is a completely isometric algebra homomorphism whenever $\pi : \A \to B(G)$ is a completely isometric algebra homomorphism, see \cite[Theorem 3.1]{Meyerunit}. In particular, choosing $\pi : \A \to B(H)$ as in Proposition \ref{p:invo1} we see that the involution $\da : \A^1 \to \A^1$, $(a,\la)^\da := (a^\da, \ov \la)$, is completely isometric.
\end{proof}

Recall that a $C^{*}$-envelope of the operator algebra $\A$ is a pair $(C^{*}_e(\A),\io)$ consisting of a $C^*$-algebra $C^*_e(\A)$ and a completely isometric algebra homomorphism $\io : \A \to C^*_e(\A)$ such that the following holds:
\begin{enumerate}
\item $C^*_e(\A)$ is generated as a $C^*$-algebra by $\io(\A)$;
\item For any pair $(B,j)$ consisting of a $C^*$-algebra $B$ and a completely isometric algebra homomorphism $j : \A \to B$ such that $j(\A)$ generates $B$ as a $C^*$-algebra, there exists a $*$-homomorphism $\si : B \to C_e^*(\A)$ satisfying $\si \ci j = \io$.
\end{enumerate}

The general existence result for $C^*$-envelopes is due to Arveson and Hamana, \cite{Arvsub, ArvsubII, Ham:IEO}.

Remark that the $C^*$-envelope of $\A$ is unique in the sense that for any alternative $C^*$-envelope $(C^{*}_e(\A)',\io')$ there exists a $*$-isomorphism $\si : C^{*}_e(\A)' \to C^{*}_e(\A)$ such that $\si \ci \io' = \io$.

\begin{prop} Suppose that $\A$ is an operator $*$-algebra with completely isometric involution $\da : \A \to \A$. Then there exists an order two automorphism $\si : C_{e}^{*}(\A)\to C_{e}^{*}(\A)$ such that 
\begin{equation}\label{eq:ordtwo}
(\si \ci \io)(\A)=\io( \A^{*} ) \q \T{and} \q \io(a^*) = \si\big( \io(a^\da) \big).
\end{equation}

\end{prop}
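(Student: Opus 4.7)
The strategy is to invoke the universal property of $C^*_e(\A)$ applied to a second completely isometric algebra homomorphism of $\A$ into $C^*_e(\A)$, constructed from $\io$ and the involution $\da$. Specifically, define $\io' : \A \to C^*_e(\A)$ by $\io'(a) := \io(a^\da)^*$. Because both $\da$ and the $C^*$-involution $*$ are conjugate-linear and anti-multiplicative, their composition is linear and multiplicative; thus $\io'$ is an algebra homomorphism.

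To see that $\io'$ is completely isometric, observe that for any matrix $[a_{ij}] \in M_n(\A)$ one has $[\io'(a_{ij})] = ([\io(a_{ji}^\da)])^*$ in $M_n(C^*_e(\A))$. The norm of the adjoint matrix equals that of the original, so
\[
\big\| [\io'(a_{ij})] \big\| = \big\| [\io(a_{ji}^\da)] \big\| = \big\| [a_{ji}^\da] \big\|_{M_n(\A)} = \big\| [a_{ij}] \big\|_{M_n(\A)},
\]
using successively the $C^*$-identity, the complete isometry of $\io$, and the complete isometry of $\da$. Moreover $\io'(\A) = \io(\A)^*$, which generates the same $C^*$-subalgebra of $C^*_e(\A)$ as $\io(\A)$, namely all of $C^*_e(\A)$.

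Applying the universal property of $C^*_e(\A)$ to the pair $(C^*_e(\A), \io')$ produces a $*$-homomorphism $\si : C^*_e(\A) \to C^*_e(\A)$ with $\si \ci \io' = \io$, that is $\si\big(\io(a^\da)^*\big) = \io(a)$. Taking $C^*$-adjoints and using that $\si$ is $*$-preserving, we obtain $\si(\io(a^\da)) = \io(a)^* = \io(a^*)$, which is one of the required identities, and in particular $(\si \ci \io)(\A) = \io(\A)^* = \io(\A^*)$. Replacing $a$ by $a^\da$ yields $\si(\io(a)) = \io(a^\da)^*$, whence $\si^2(\io(a)) = \si(\io(a^\da)^*) = \si(\io(a^\da))^* = \io(a)$. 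Since $\io(\A)$ generates $C^*_e(\A)$ as a $C^*$-algebra and $\si^2$ is a $*$-homomorphism, it follows that $\si^2 = \T{id}_{C^*_e(\A)}$; in particular $\si$ is bijective and is therefore a $*$-automorphism of order two.

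There is no serious obstacle. The only subtlety is spotting the correct auxiliary embedding $a \mapsto \io(a^\da)^*$: the naive choice $a \mapsto \io(a^\da)$ is merely an anti-homomorphism (since $\da$ reverses multiplication), and composing with the ambient $*$ is precisely what restores multiplicativity. This is also the structural reason why an order-two $*$-automorphism of $C^*_e(\A)$ appears automatically, tying the statement back to Proposition \ref{p:invo2}.
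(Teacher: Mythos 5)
Your proof is correct and follows essentially the same route as the paper: both define the auxiliary completely isometric homomorphism $a \mapsto \io(a^\da)^*$, invoke the universal property of $C^*_e(\A)$ to obtain $\si$, and then verify the identities and $\si^2 = \mathrm{id}$ on the generating set. You simply spell out a couple of details (multiplicativity and complete isometry of the auxiliary map) that the paper leaves implicit.
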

\begin{proof}
Define the completely isometric algebra homomorphism $j : \A \to C_e^*(\A)$ by $j(a) := \io(a^\da)^*$. Since $j(\A) \su C_e^*(\A)$ generates $C_e^*(\A)$ as a $C^*$-algebra there exists a $*$-homomorphism $\si : C_e^*(\A) \to C_e^*(\A)$ such that $(\si \ci j)(a) = \io(a)$. But then we have that
\[
\si\big( \io(a^\da) \big) = \si\big(  j(a) \big)^* = \io(a)^* = \io(a^*),
\]
proving the identities in \eqref{eq:ordtwo}. Moreover, $\si$ has order two since
\[
\si^2(j(a)) = (\si \ci \io)(a) = \io(a^\da)^* = j(a),
\]
and since $j(\C A) \su C_e^*(\A)$ generates $C_e^*(\A)$ as a $C^*$-algebra. \end{proof}

\section{Operator $*$-correspondences}\label{s:opecor}
We now describe a class of inner product bimodules over operator $*$-algebras, generalizing the notion of a $C^{*}$-correspondence for a pair of $C^{*}$-algebras, \cite{Rie:IRA,Rie:MEA,Pas:IPM}. Our inner product modules arise naturally in noncommutative geometry, and we provide various examples.

\subsection{Definitions} We first recall the notion of an operator $\C A$-$\C B$-bimodule for general operator algebras $\C A$ and $\C B$.

\begin{dfn}\label{d:opebim}
Let $\C A$ and $\C B$ be operator algebras. An operator space $\C X$ is an \emph{operator $\C A$-$\C B$-bimodule} when the following holds:
\begin{enumerate}
\item $\C X$ is an $\C A$-$\C B$-bimodule; 

\item The inequalities
\[
\| a \cd \xi \|_{\C X} \leq \| a \|_{\C A} \cd \| \xi \|_{\C X}, \q \| \xi \cd b\|_{\C X} \leq \|\xi \|_{\C X} \cd \| b \|_{\C B},
\]
hold for all $n \in \nn$, $\xi \in M_n(\C X)$ and all $a \in M_n(\C A)$, $b \in M_n(\C B)$ where $(a \cd \xi)_{ij} = \sum_{k = 1}^n a_{ik} \cd \xi_{kj}$ and $(\xi \cd b)_{ij} = \sum_{k = 1}^n \xi_{ik} \cd b_{kj}$ for all $i,j \in \{1,\ldots,n\}$.
\end{enumerate}
Two operator $\C A$-$\C B$-bimodules $\C X$ and $\C Y$ are \emph{cb-isomorphic} when there exists a completely bounded module homomorphism $\phi : \C X \to \C Y$ such that the inverse $\phi^{-1} : \C Y \to \C X$ is completely bounded as well. When $\C A = \cc$ we say that $\C X$ is a \emph{right operator $\C B$-module} and when $\C B = \cc$ we say that $\C X$ is a \emph{left operator $\C A$-module}.

\end{dfn}

An operator $\C A$-$\C B$-bimodule $\C X$ is always cb-isomorphic to a concrete object in the following way: There exist a Hilbert space $H$, a completely bounded map $\phi : \C X \to B(H)$ and completely bounded algebra homomorphisms $\te : \C A \to B(H)$ and $\pi : \C B \to B(H)$ such that
\begin{enumerate}
\item The images $\phi(\C X), \te(\C A), \pi(\C B) \su B(H)$ are all closed and $\phi : \C X \to \phi(\C X)$, $\te : \C A \to \te(\C A)$ and $\pi : \C B \to \pi(\C B)$ are all completely bounded isomorphisms.
\item $\te(a) \cd \phi(\xi) = \te(a \cd \xi)$ and $\phi(\xi) \cd \pi(b) = \phi(\xi \cd b)$ for all $\xi \in \C X$, $a \in \C A$ and $b \in \C B$.
\end{enumerate}
See \cite[Theorem 2.2]{Ble:Gen}. 

In the case where $\C A$ and $\C B$ have contractive approximate identities and $\C X$ is non-degenerate in the sense that the sub-bimodules $\C A \cd \C X$ and $\C X \cd \C B \su \C X$ are norm-dense one may obtain that $\phi, \te$ and $\pi$, in the above statement, are \emph{completely isometric}, see \cite[Corollary 3.3]{ChEfSi:CBM} and \cite[Theorem 3.3.1]{BlMe:OMO}. 
\medskip

For operator $*$-algebras we focus on modules with some extra structure:

\begin{dfn}\label{d:herope}
Let $\C B$ be an operator $*$-algebra. A right operator $\C B$-module $\C X$ is an \emph{operator $*$-module over $\C B$} when it comes equipped with a pairing
\[
\inn{\cd,\cd}_{\C X} : \C X \ti \C X \to \C B ,
\]
satisfying the conditions:
\begin{enumerate}
\item $\inn{\xi, \eta \cd b}_{\C X} = \inn{\xi, \eta}_{\C X} \cd b$ for all $\xi,\eta \in \C X$ and $b \in \C B$;
\item $\inn{\xi, \la \cd \eta + \mu \cd \ze}_{\C X} = \inn{\xi, \eta}_{\C X} \cd \la + \inn{\xi, \ze}_{\C X} \cd \mu$ for all $\xi,\eta,\ze \in \C X$ and $\la,\mu \in \cc$;
\item $\inn{\xi,\eta}_{\C X} = \inn{\eta,\xi}_{\C X}^\da$ for all $\xi, \eta \in \C X$;
\item We have the inequality
\[
\| \inn{\xi,\eta}_{\C X} \|_{\C B} \leq \| \xi \|_{\C X} \cd \| \eta \|_{\C X} \q \T{for all } n\in \nn \T{ and } \xi, \eta \in M_n(\C X) \, ,
\]
where $( \inn{\xi,\eta}_{\C X} )_{ij} := \sum_{k = 1}^n \inn{\xi_{ki}, \eta_{kj}}_{\C X}$ for all $i,j \in \{1,\ldots,n\}$.
\end{enumerate}
We refer to the pairing $\inn{\cd,\cd}_{\C X}$ as the \emph{hermitian form} or the \emph{inner product}. Condition $(4)$ will sometimes be referred to as the \emph{Cauchy-Schwarz inequality}. 

We say that two operator $*$-modules $\C X$ and $\C Y$ over $\C B$ are \emph{cb-isomorphic} when there exists a cb-isomorphism $\phi : \C X \to \C Y$ of the underlying right operator $\C B$-modules such that $\inn{\phi(\xi),\phi(\eta)}_{\C Y} = \inn{\xi,\eta}_{\C X}$ for all $\xi,\eta \in \C X$.
\end{dfn}

Any Hilbert $C^*$-module over a $C^*$-algebra $B$ can be given the structure of an operator $*$-module. The matrix norms $\| \cd \|_X : M_n(X) \to [0,\infty)$, $n \in \nn$, are defined by
\[
\| \xi \|_X := \| \inn{\xi,\xi}_X \|_B^{1/2} ,\q \xi \in M_n(X) \, ,
\]
where $\inn{\cd,\cd}_X$ denotes the extension to matrices of the inner product on $X$, see Definition \ref{d:herope} $(4)$.

Contrary to the case of Hilbert $C^*$-modules, it is \emph{not} true that the inner product $\inn{\cd,\cd}_{\C X}$ on a general operator $*$-module controls the norm. Thus, even though we obtain a bounded operator $\inn{\xi,\cd}_{\C X} : \C X \to \C B$ whenever $\xi \in \C X$, it need \emph{not} hold that $\| \inn{\xi,\cd}_{\C X} \|_{\infty} \geq \| \xi \|_{\C X}$ (where $\| \inn{\xi,\cd}_{\C X} \|_\infty \in [0,\infty)$ refers to the operator norm).

\begin{dfn}\label{dual}
Let $\C X$ be an operator $*$-module. By the \emph{adjoint module} we will understand the operator space $\C X^\da = \{ \xi^\da \mid \xi \in \C X\}$ which as a vector space is the conjugate of $\C X$ and with matrix norms defined by
\[
\| \cd \|_{\C X^\da} : \xi^\da \mapsto \| \xi \|_{\C X}, \q \xi \in M_n(\C X) \, ,
\]
where $(\xi^\da)_{ij} = (\xi_{ji})^\da$, $i,j \in \{1,\ldots,n\}$.
\end{dfn}

The adjoint module $\C X^\da$ becomes a left operator $\C B$-module with left action defined by
\[
b \cd \xi^\da := (\xi \cd b^\da)^\da ,\q \xi \in \C X \, , \, \, b \in \C B.
\]

\begin{remark}
The terminology ``operator $*$-module'' was applied in a much more restrictive way in \cite{KaLe:Flow}. The modules referred to as ``operator $*$-modules'' in the present text were called ``hermitian operator modules'' in \cite{KaLe:Flow}  (except that the inner product was only assumed to satisfy a completely bounded version of the Cauchy-Schwarz inequality).
\end{remark}

\subsection{Compact operators}
Let $\C B$ be an operator algebra. For a right operator $\C B$-module $\C X$ and a left operator $\C B$-module $\C Y$ we recall that the balanced Haagerup tensor product $\C X \wot_{\C B} \C Y$ is the operator space defined as follows: Equip the algebraic tensor product $\C X \ot \C Y$ with the matrix norms
\[
\begin{split}
& \| \cd \|_{\C X \wot \C Y} : M_n( \C X \ot \C Y ) \to [0,\infty) \\
& \| z \|_{\C X \wot \C Y} := \inf\big\{ \| x \|_{\C X} \cd \| y \|_{\C Y} \mid z = x \ot y \big\} \, ,
\end{split}
\]
where $(x \ot y)_{ij} := \sum_{k = 1}^m x_{ik} \ot y_{kj}$ whenever $x \in M_{n,m}(\C X)$ and $y \in M_{m,n}(\C X)$. The corresponding completion $\C X \wot \C Y$ is an operator space known as the {\em Haagerup tensor product} of the operator spaces $\C X$ and $\C Y$. The {\em balanced Haagerup tensor product}, $\C X \wot_{\C B} \C Y$, is obtained from $\C X \wot \C Y$ as the quotient operator space
\[
\C X \wot_{\C B} \C Y := (\C X \wot \C Y)/N \, ,
\]
where $N \su \C X \wot \C Y$ is defined to be the closure of the subspace 
\[
\T{span}_{\cc}\{ x\cd b \ot y - x \ot b \cd y \mid x \in \C X \, , \, \, y \in \C Y \, , \, \, b \in \C B \} \su \C X \wot \C Y.
\]
For a Hilbert $C^{*}$-module $X$ over a $C^*$-algebra $B$, the $C^{*}$-algebra $\mathbb{K}(X)$ of compact operators on $X$ can be identified with the balanced Haagerup tensor product $X\wot_{B}X^\da$ by \cite[Theorem 3.10]{Ble:Hilb}. Indeed, the relevant completely isometric isomorphism is induced by $x \ot_B y^\da \mapsto \ket{x}\bra{y}$, $x,y \in X$, where $\ket{x}\bra{y} \in \mathbb{K}(X)$ is the compact operator defined by $\ket{x}\bra{y} : z \mapsto x \cd \inn{y,z}_X$ for all $z \in X$. This result motivates the following definition:

\begin{dfn}
Let $\C X$ be an operator $*$-module. We refer to the balanced Haagerup tensor product of $\C X$ and $\C X^\da$ over $\C B$:
\[
\mathbb{K}(\C X) := \C X \wot_{\C B} \C X^\da,
\]
as the \emph{compact operators} on $\C X$.
\end{dfn}

We emphasize that we have defined $\mathbb{K}(\C X)$ as an abstract operator space and not via an action on $\C X$. It can then be verified that $\mathbb{K}(\C X)$ is an operator $*$-algebra:

 \begin{prop}
Let $\C X$ be an operator $*$-module. The compact operators on $\C X$ form an operator $*$-algebra with $*$-algebra structure induced by
\[
\begin{split}
& (\xi_0 \ot_{\C B} \eta_0^\da ) \cd (\xi_1 \ot_{\C B} \eta_1^\da) := \xi_0 \cd \inn{\eta_0,\xi_1}_{\C X} \ot_{\C B} \eta_1^\da ,\\
& (\xi_0 \ot_{\C B} \eta_0^\da)^\da := \eta_0 \ot_{\C B} \xi_0^\da \, ,
\end{split}
\]
for all $\xi_0,\xi_1,\eta_0,\eta_1 \in \C X$.
\end{prop}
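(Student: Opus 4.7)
The plan is to construct the multiplication and involution on $\mathbb{K}(\C X) = \C X \wot_{\C B} \C X^\da$ abstractly using the universal properties of the Haagerup tensor product, and then verify the $*$-algebra axioms by direct calculation on elementary tensors.

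For the multiplication, the first step is to exhibit the inner product as a completely contractive pairing $\phi : \C X^\da \wot \C X \to \C B$, $\eta^\da \ot \xi \mapsto \inn{\eta, \xi}_{\C X}$. Complete contractivity reduces to the Cauchy-Schwarz inequality of Definition \ref{d:herope}(4) once one unpacks a Haagerup factorization $z_{ij} = \sum_k \mu_{ik}^\da \ot y_{kj}$ and uses the identities $\|\mu^\da\|_{M_{n,m}(\C X^\da)} = \|\mu\|_{M_{n,m}(\C X)}$ and $(\mu^\da)_{ij} = (\mu_{ji})^\da$. The pairing moreover satisfies the $\C B$-bimodule compatibilities $\phi(b \cdot \eta^\da \ot \xi) = b \cdot \phi(\eta^\da \ot \xi)$ and $\phi(\eta^\da \ot \xi \cdot b) = \phi(\eta^\da \ot \xi) \cdot b$; both follow from conditions (1) and (3) together with the defining formula $b \cdot \eta^\da = (\eta \cdot b^\da)^\da$ for the left $\C B$-action on $\C X^\da$. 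By associativity of the Haagerup tensor product, applying $\phi$ in the middle two factors of $\C X \wot_{\C B} \C X^\da \wot \C X \wot_{\C B} \C X^\da$ then yields the desired completely contractive multiplication $m : \mathbb{K}(\C X) \wot \mathbb{K}(\C X) \to \mathbb{K}(\C X)$. Its associativity is a one-line calculation on elementary tensors using the right $\C B$-linearity $\inn{\eta, \xi \cdot b}_{\C X} = \inn{\eta, \xi}_{\C X} \cdot b$.

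For the involution, one invokes the general completely isometric antilinear identification $(\C Y \wot \C Z)^\da \cong \C Z^\da \wot \C Y^\da$, $(y \ot z)^\da \mapsto z^\da \ot y^\da$, valid for arbitrary operator spaces equipped with an isometric matrix involution. This identification is obtained by combining the Haagerup-tensor-product compatibilities $\overline{\C Y \wot \C Z} \cong \overline{\C Y} \wot \overline{\C Z}$ and $(\C Y \wot \C Z)^{\T{op}} \cong \C Z^{\T{op}} \wot \C Y^{\T{op}}$ with the recognition that the adjoint operator space $\C X^\da$ corresponds to $\overline{\C X}^{\T{op}}$. The same identification descends after balancing over $\C B$, producing a completely isometric antilinear involution on $\mathbb{K}(\C X)$ that, after identifying $(\C X^\da)^\da = \C X$, sends $\xi \ot_{\C B} \eta^\da$ to $\eta \ot_{\C B} \xi^\da$ on elementary tensors.

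The $*$-algebra identity $(xy)^\da = y^\da x^\da$ then follows from a direct calculation: using condition (3) to rewrite $\inn{\eta_0, \xi_1}_{\C X}^\da = \inn{\xi_1, \eta_0}_{\C X}$ and the $\C B$-balancing in the target to commute this element past the tensor symbol yields the identity, while $x^{\da\da} = x$ is immediate from the formula on elementary tensors. The main obstacle is the careful justification of the antilinear identification $(\C X \wot_{\C B} \C X^\da)^\da \cong \C X \wot_{\C B} \C X^\da$ as a completely isometric map: one must precisely track how the Haagerup tensor norm interacts with the conjugation and opposite operations on operator spaces, and verify that the subspace quotiented out in the construction of $\C X \wot_{\C B} \C X^\da$ is preserved by the dagger operation, so that the involution genuinely descends to a well-defined map on $\mathbb{K}(\C X)$.
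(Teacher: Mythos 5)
The paper states this proposition without proof (it follows the sentence ``It can then be verified that\dots''), so there is nothing to compare line by line; your proposal supplies exactly the verification the authors leave to the reader, and it is correct. The two pillars are the right ones: complete contractivity of the pairing $\C X^\da \wot \C X \to \C B$ via the matrix Cauchy--Schwarz inequality of Definition \ref{d:herope}(4) (whose index convention $(\inn{\xi,\eta}_{\C X})_{ij} = \sum_k \inn{\xi_{ki},\eta_{kj}}_{\C X}$ is precisely tailored to a Haagerup factorization through $\C X^\da$), and the $\C B$-bimodule property of that pairing, which both makes the multiplication descend to the balanced quotient and gives submultiplicativity of the matrix norms after composing with the completely contractive action $\C X \wot \C B \to \C X$.

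The one step you defer as ``the main obstacle'' --- that the flip involution descends to the balanced quotient --- is in fact routine and worth writing out, since it is the only place the left $\C B$-action on $\C X^\da$ enters. Writing $\Theta(\xi \ot \eta^\da) := \eta \ot \xi^\da$ on $\C X \wot \C X^\da$ (completely isometric by the symmetric factorization argument you sketch, since $\Theta^2 = \mathrm{id}$), one checks on the generators of the balancing subspace $N$ that
\[
\Theta\big( \xi \cd b \ot \eta^\da - \xi \ot b \cd \eta^\da \big)
= \eta \ot (\xi \cd b)^\da - (\eta \cd b^\da) \ot \xi^\da
= -\big( (\eta \cd b^\da) \ot \xi^\da - \eta \ot b^\da \cd \xi^\da \big) \in N ,
\]
using $(\xi \cd b)^\da = b^\da \cd \xi^\da$ and $(b \cd \eta^\da)^\da = \eta \cd b^\da$. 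Hence $\Theta(N) = N$ and the induced map on $\mathbb{K}(\C X)$ is a completely isometric involution. With that supplied, your argument is complete; whether one phrases the isometry of $\Theta$ via the abstract identification $(\C Y \wot \C Z)^\da \cong \C Z^\da \wot \C Y^\da$ or by the direct two-sided estimate on factorizations is a matter of taste.
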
 

In line with the usual constructions regarding Morita equivalence, we then have that $\C X^\da$ is an operator $*$-module over $\mathbb{K}(\C X)$:

\begin{prop}
The adjoint module $\C X^\da$ is an operator $*$-module over $\bK(\C X) = \C X \wot_{\C B} \C X^\da$. The inner product is given by
\[
\inn{\xi^\da,\eta^\da}_{\C X^\da} := \xi \ot_{\C B} \eta^\da, \q \xi,\eta \in \C X,
\]
and the right-module structure is induced by
\[
\xi^\da \cd (\eta \ot_{\C B} \ze^\da) := (\ze \cd \inn{\eta,\xi}_{\C X} )^\da,  \q \xi,\eta,\ze \in \C X.
\]
\end{prop}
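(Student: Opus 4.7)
The task is to verify the axioms of an operator $*$-module (Definition~\ref{d:herope}) for $\C X^\da$ over the operator $*$-algebra $\mathbb{K}(\C X) = \C X \wot_{\C B} \C X^\da$. I would split this into checking well-definedness of the two formulas, verifying the algebraic axioms on elementary tensors, and proving the matricial Cauchy--Schwarz inequality. The main obstacle is not depth but bookkeeping: one must track the involution on $\C X$, the two pairings (valued in $\C B$ and in $\mathbb{K}(\C X)$ respectively), the induced left $\C B$-action on $\C X^\da$, and the involution and multiplication on $\mathbb{K}(\C X)$, ensuring that each expression lands in its intended space and the daggers cancel.

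For well-definedness of the right action, I would introduce the trilinear map
\[
\C X^\da \times \C X \times \C X^\da \to \C X^\da, \qquad (\xi^\da, \eta, \zeta^\da) \mapsto (\zeta \cd \inn{\eta,\xi}_{\C X})^\da.
\]
For fixed $\xi^\da$, the resulting bilinear map in $(\eta,\zeta^\da)$ is completely bounded by Cauchy--Schwarz in $\C X$ combined with the right $\C B$-module inequality, and it is $\C B$-balanced via the identities $\inn{\eta \cd b, \xi}_{\C X} = b^\da \cd \inn{\eta,\xi}_{\C X}$ and $b \cd \zeta^\da = (\zeta \cd b^\da)^\da$. The universal property of $\wot_{\C B}$ then yields a completely bounded map $\mathbb{K}(\C X) \to \C X^\da$, and letting $\xi^\da$ vary produces the action as a completely bounded bilinear map. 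The formula $\inn{\xi^\da,\eta^\da}_{\C X^\da} := \xi \ot_{\C B} \eta^\da$ needs no descent.

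For the algebraic axioms I would verify the statements on elementary tensors. Associativity of the right action reduces to showing that both
\[
\xi^\da \cd \big((\eta_0 \ot_{\C B} \mu_0^\da)(\eta_1 \ot_{\C B} \mu_1^\da)\big) \q \T{and} \q \big(\xi^\da \cd (\eta_0 \ot_{\C B} \mu_0^\da)\big) \cd (\eta_1 \ot_{\C B} \mu_1^\da)
\]
reduce to $(\mu_1 \cd \inn{\eta_1,\mu_0}_{\C X} \cd \inn{\eta_0,\xi}_{\C X})^\da$, using the multiplication formula on $\mathbb{K}(\C X)$ and the identity $\inn{\eta \cd b, \xi}_{\C X} = b^\da \cd \inn{\eta,\xi}_{\C X}$. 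Axiom (1) of the inner product follows from the Haagerup balancing $\xi \ot_{\C B} (b \cd \inn{a,\eta}_{\C X})^\da = \xi \cd \inn{\eta,a}_{\C X} \ot_{\C B} b^\da$ together with the multiplication formula in $\mathbb{K}(\C X)$. Axiom (2) is conjugate-linearity of $\xi \mapsto \xi^\da$, and axiom (3) is exactly the defining identity $(\xi \ot_{\C B} \eta^\da)^\da = \eta \ot_{\C B} \xi^\da$ of the involution on $\mathbb{K}(\C X)$.

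Finally, for the matricial Cauchy--Schwarz inequality (axiom (4)), direct unfolding of definitions yields
\[
\bigl(\inn{\xi^\da,\eta^\da}_{\C X^\da}\bigr)_{ij} = \sum_k \xi_{ik} \ot_{\C B} (\eta_{jk})^\da
\]
for $\xi,\eta \in M_n(\C X)$, exhibiting $\inn{\xi^\da,\eta^\da}_{\C X^\da}$ as the Haagerup matrix product of $\xi \in M_n(\C X)$ and $\eta^\da \in M_n(\C X^\da)$. The standard matrix-norm estimate for $\wot$, together with the contractivity of the quotient $\wot \to \wot_{\C B}$, then gives
\[
\|\inn{\xi^\da,\eta^\da}_{\C X^\da}\|_{M_n(\mathbb{K}(\C X))} \leq \|\xi\|_{M_n(\C X)} \cd \|\eta\|_{M_n(\C X)} = \|\xi^\da\|_{M_n(\C X^\da)} \cd \|\eta^\da\|_{M_n(\C X^\da)},
\]
as required.
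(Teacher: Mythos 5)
Your argument is correct, and in fact the paper states this proposition with no proof at all, so there is nothing to compare it against except the neighbouring Proposition \ref{prop:compactcor}, whose (partial) proof follows exactly the pattern you use: reduce to elementary tensors, invoke the identities $\inn{\eta \cd b, \xi}_{\C X} = b^\da \cd \inn{\eta,\xi}_{\C X}$ and $b \cd \ze^\da = (\ze \cd b^\da)^\da$ for balancedness, and get the norm estimates from the Cauchy--Schwarz inequality together with the defining infimum of the Haagerup norm. Your verifications of associativity, of axioms (1)--(3), and of the matricial Cauchy--Schwarz inequality (where $\inn{\xi^\da,\eta^\da}_{\C X^\da}$ is literally the Haagerup matrix product of $\xi$ with $\eta^\da$) are all accurate; I checked in particular that both sides of the associativity identity collapse to $(\mu_1 \cd \inn{\eta_1,\mu_0}_{\C X} \cd \inn{\eta_0,\xi}_{\C X})^\da$ as you claim.

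The one place you are slightly breezy is the matricial module inequality $\| \xi^\da \cd K \|_{M_n(\C X^\da)} \leq \| \xi^\da \|_{M_n(\C X^\da)} \cd \| K \|_{M_n(\bK(\C X))}$: fixing $\xi^\da$ at the scalar level and invoking the universal property gives well-definedness and a cb map $\bK(\C X) \to \C X^\da$, but for the bimodule estimate of Definition \ref{d:opebim} one should let $\xi^\da$ vary over $M_n(\C X^\da)$ and compute, for $K = \eta \ot_{\C B} \mu^\da$ with $\eta \in M_{n,m}(\C X)$ and $\mu^\da \in M_{m,n}(\C X^\da)$, that $\xi^\da \cd (\eta \ot_{\C B} \mu^\da) = (\mu \cd \inn{\eta,\xi}_{\C X})^\da$, whence the bound $\| \mu \|_{\C X} \cd \| \eta \|_{\C X} \cd \| \xi \|_{\C X}$ by the rectangular (padded) Cauchy--Schwarz inequality and the right $\C B$-module estimate, before taking the infimum over representations of $K$ and passing to limits. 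This is a one-line addition and mirrors the displayed computation in the paper's proof of Proposition \ref{prop:compactcor}, so the gap is cosmetic rather than substantive.
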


\subsection{Operator $*$-correspondences}
Finding the appropriate notion of a bimodule over a pair of operator $*$-algebras requires some care. It is well known that the operator space $\T{CB}(\C X)$ of completely bounded endomorphisms of an operator space $\C X$ is not an operator algebra unless $\C X$ is a column Hilbert space (see \cite[Proposition 5.1.9]{BlMe:OMO}). This is the main reason for applying the abstract definition of $\mathbb{K}(\C X)$ above. More generally, this fact presents an obstruction to a straightforward definition of the analogue of adjointable operators on operator $*$-modules. When the operator $*$-algebra $\mathbb{K}(\C X)$ has a bounded approximate unit, the use of multiplier algebras provides a solution to this problem, as has been employed in \cite{Ble:Gen, KaLe:Flow, Mes, MesRen}. In many situations, notably in case the Riemannian manifold $M$ in Example \ref{ex: conbun} above is not complete, such an approximate unit does not exist. This more general setting has previously been considered in \cite{Kaad:Abs, Kaad:Diff}. The definition of an operator $*$-correspondence given below also appears in \cite{Kaa:MEU}.

\begin{dfn}\label{d:opstarcor}
Let $\C A$ and $\C B$ be operator $*$-algebras and let $\C X$ be an operator $*$-module over $\C B$. We say that $\C X$ is an \emph{operator $*$-correspondence} from $\C A$ to $\C B$ when $\C X$ comes equipped with a left operator $\C A$-module structure such that
 \begin{enumerate}
\item $a \cd ( \xi \cd b ) = (a \cd \xi) \cd b$ for all $a \in \C A$, $\xi \in \C X$, $b \in \C B$;
\item $\inn{a \cd \xi, \eta}_{\C X} = \inn{\xi, a^\da \cd \eta}_{\C X}$ for all $\xi, \eta \in \C X$,  $a \in \C A$.
\end{enumerate}
If $\C X$ and $\C Y$ are operator $*$-correspondences from $\C A$ to $\C B$ and from $\C C$ to $\C D$ respectively. We say that $\C X$ and $\C Y$ are \emph{cb-isomorphic} when there exist a cb-isomorphism $\phi : \C X \to \C Y$ of the underlying operator spaces together with cb-isomorphisms of operator $*$-algebras $\al : \C A \to \C C$ and $\be : \C B \to \C D$ such that
\begin{enumerate}[resume]
\item  $\phi(a \cd  \xi) = \al(a) \cd \phi(\xi)$ and $\phi(\xi \cd b) = \phi(\xi) \cd \be(b)$ for all $a \in \C A$, $\xi \in \C X$, $b \in \C B$; 
\item $\inn{\phi(\xi), \phi(\eta)}_{\C Y} = \be( \inn{\xi,\eta}_{\C X})$ for all $\xi,\eta \in \C X$.
\end{enumerate} 

\end{dfn}

Our first example of an operator $*$-correspondence concerns the left action of the compact operators $\mathbb{K}(\C X)$ on an operator $*$-module $\C X$. 

\begin{prop}\label{prop:compactcor}
Let $\C X$ be an operator $*$-module. Then $\C X$ becomes an operator $*$-correspondence from $\mathbb{K}(\C X)$ to $\C B$ when equipped with the left action induced by:
\begin{equation}\label{eq:compactrep}
x \ot_{\C B} y^\da : z \mapsto x \cd \inn{y , z}_{\C X}.
\end{equation}
\end{prop}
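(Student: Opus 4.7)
The plan is to verify the requirements of Definition \ref{d:opstarcor}: that the stated formula defines a well-defined left operator $\mathbb{K}(\C X)$-module structure on $\C X$ (being a well-defined algebra action and satisfying the matrix-norm bound of Definition \ref{d:opebim}), that this action commutes with the given right $\C B$-action, and that the action is related to the inner product via the involution on $\mathbb{K}(\C X)$.

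First I would check that the formula $(x \ot_{\C B} y^\da) \cd z := x \cd \inn{y,z}_{\C X}$ respects the balancing relation defining $\C X \wot_{\C B} \C X^\da$, i.e.\ that $(x \cd b) \ot_{\C B} y^\da$ and $x \ot_{\C B}(b \cd y^\da)$ act identically on every $z \in \C X$. Combining the identity $b \cd y^\da = (y \cd b^\da)^\da$ from the discussion following Definition \ref{dual} with Hermitian symmetry and the right $\C B$-linearity of the inner product yields $\inn{y \cd b^\da, z}_{\C X} = b \cd \inn{y, z}_{\C X}$, so both elements act as $x \cd b \cd \inn{y,z}_{\C X}$. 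A similar short computation on elementary tensors shows the action is multiplicative, so that one indeed obtains an algebra action.

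Next, and this is the technical heart, I would establish the matrix-norm bound
\[
\| u \cd \xi \|_{M_n(\C X)} \leq \| u \|_{M_n(\mathbb{K}(\C X))} \cd \| \xi \|_{M_n(\C X)}
\]
for $u \in M_n(\mathbb{K}(\C X))$ and $\xi \in M_n(\C X)$. Given a Haagerup factorization $u_{ij} = \sum_k x_{ik} \ot_{\C B} (y_{jk})^\da$ with $x, y \in M_{n,m}(\C X)$, a short calculation identifies $u \cd \xi$ with the matrix product $x \cd \inn{y,\xi}_{\C X}$ of sizes $n \ti m$ and $m \ti n$. The right $\C B$-module axiom bounds $\|x \cd \inn{y,\xi}\|$ by $\|x\| \cd \|\inn{y,\xi}\|$, and a rectangular version of the Cauchy--Schwarz inequality of Definition \ref{d:herope}(4), obtained by padding both $y$ and $\xi$ with zeros to size $\max(m,n)$ and applying the square case, bounds $\|\inn{y,\xi}\|$ by $\|y\| \cd \|\xi\| = \|y^\da\| \cd \|\xi\|$. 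Taking the infimum over factorizations gives the Haagerup-norm bound on $\C X \wot \C X^\da$, and since the quotient norm on $\mathbb{K}(\C X) = \C X \wot_{\C B} \C X^\da$ is only smaller, the estimate descends. This simultaneously extends the action by density from the algebraic to the completed balanced tensor product.

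Finally I would verify the two axioms of Definition \ref{d:opstarcor}. On $k = x \ot_{\C B} y^\da$, the bimodule compatibility $(k \cd \xi) \cd b = k \cd (\xi \cd b)$ reads $(x \cd \inn{y,\xi}_{\C X}) \cd b = x \cd \inn{y, \xi \cd b}_{\C X}$, which is just the right $\C B$-linearity of the inner product. For the adjoint relation $\inn{k \cd \xi, \eta}_{\C X} = \inn{\xi, k^\da \cd \eta}_{\C X}$ with $k^\da = y \ot_{\C B} x^\da$, both sides reduce to $\inn{\xi, y}_{\C X} \cd \inn{x,\eta}_{\C X}$: the left-hand side using the consequence $\inn{x \cd b, \eta}_{\C X} = b^\da \cd \inn{x,\eta}_{\C X}$ of Hermitian symmetry, the right-hand side by direct expansion. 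Both identities extend to all of $\mathbb{K}(\C X)$ by linearity and continuity. I expect the main obstacle to be the matrix-norm estimate of the second step, specifically the rectangular Cauchy--Schwarz inequality forced by Haagerup factorizations, which is resolved by the standard zero-padding trick.
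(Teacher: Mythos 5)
Your proposal is correct and follows essentially the same route as the paper: the paper's proof consists precisely of your second step (bounding the action of an elementary tensor $x \ot_{\C B} y^\da$ via the right $\C B$-module axiom and the Cauchy--Schwarz inequality, then passing to the infimum over Haagerup factorizations and concluding by density), and it explicitly leaves the remaining verifications --- well-definedness over the balancing relations, multiplicativity, and the axioms of Definition \ref{d:opstarcor} --- as ``straightforward algebraic manipulations,'' which you carry out correctly. Your explicit attention to the rectangular Cauchy--Schwarz inequality via zero-padding and to the descent of the estimate to the quotient norm fills in details the paper glosses over, but does not change the argument.
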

\begin{proof} We will only verify that the left action satisfies the inequality 
\begin{equation}\label{eq:inequ}
\|K \cd z \|_{\C X} \leq \| K \|_{\bK(\C X)} \cd \| z \|_{\C X},
\end{equation}
for all $n \in \nn$, $K \in M_n(\bK(\C X))$ and $z \in M_n(\C X)$. The remaining properties follow by straightforward algebraic manipulations. 

Let thus $x,y \in M_{n,m}(\C X)$ and $z \in M_n(\C X)$ for some $n,m \in \nn$ be given. We then have that
\[
(x \ot_{\C B} y^\da) \cd z = x \cd \inn{y,z}_{\C X},
\]
and hence that
\[
\| (x \ot_{\C B} y^\da) \cd z \|_{\C X} \leq \| x \|_{\C X} \cd \| \inn{y,z}_{\C X} \|_{\C B} \leq \| x \|_{\C X} \cd \| y^\da \|_{\C X^\da} \cd \| z \|_{\C X} \, ,
\]
where we have applied the Cauchy-Schwartz inequality for $\C X$, the right operator $\C B$-module structure on $\C X$, as well as the definition of the operator space structure on $\C X^\da$, see Definition \ref{d:herope}, \ref{d:opebim} and Definition \ref{dual}. But this implies that $\| (x \ot_{\C B} y^\da) \cd z \|_{\C X} \leq \| (x \ot_{\C B} y^\da) \|_{\bK(\C X)} \cd \| z \|_{\C X}$ by the definition of the norm on the balanced Haagerup tensor product. The general inequality in \eqref{eq:inequ} now follows by a density argument. \end{proof}

For an operator $*$-module $\C X$, \cite[Theorem 5.3]{Ble:Gen} shows that if both $\mathbb{K}(\C X)$ and $\C B$ have a contractive approximate identity and act nondegenerately on $\C X$, then $\C X$ is a rigged module. In that case the representation $\mathbb{K}(\C X) \to CB(\C X)$ in Proposition \ref{prop:compactcor} is completely isometric. In particular if the base $\C B$ is a $C^*$-algebra then it follows that $\C X$ is a genuine Hilbert $C^*$-module with respect to a possibly different inner product, \cite[Theorem 3.1]{Ble:Hilb}.  

In general, the abstract operator algebra $\mathbb{K}(\C X)$ and its closure in the representation \eqref{eq:compactrep} can have very different properties. For example, there are cases where $\mathbb{K}(\C X)$ does not have a bounded approximate identity, but where its closure in the representation \eqref{eq:compactrep} does. The reader is referred to  \cite[Definition 3.9, Proposition 3.10 and Remark 3.11]{MesRen} for a detailed discussion of this situation.

Our second example of an operator $*$-correspondence concerns closed right ideals in operator $*$-algebras. Let $\C L \su \C B$ be a closed right ideal in an operator $*$-algebra $\C B$. Define
\[
\C A := \ov{ \T{span}_{\cc} \big\{ \xi \cd \eta^\da \mid \xi, \eta \in \C L \big\} } \su \C B,
\]
Clearly, $\C A$ becomes an operator $*$-algebra when equipped with the $*$-algebra structure and the matrix norms inherited from $\C B$. Moreover, $\C L \su \C B$ becomes an operator $\C A$-$\C B$-bimodule when equipped with the bimodule structure coming from the algebraic operations in $\C B$ and with the matrix norms coming from $\C B$ as well. We then have the following:

\begin{prop}
The $\C B$-valued inner product defined by $\inn{\xi,\eta}_{\C L} := \xi^\da \cd \eta$ for all $\xi,\eta \in \C L$, provides the operator $\C A$-$\C B$-bimodule $\C L$ with the structure of an operator $*$-correspondence from $\C A$ to $\C B$.
\end{prop}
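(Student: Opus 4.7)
The plan is to verify directly all the axioms of Definitions \ref{d:herope} and \ref{d:opstarcor} for the candidate inner product $\langle \xi, \eta\rangle_{\mathcal L} := \xi^\dagger \cdot \eta$, using only the $*$-algebra structure of $\mathcal B$ and the fact that the involution on $\mathcal B$ is a complete isometry. The key observation is that at the matrix level, for $\xi, \eta \in M_n(\mathcal L)$,
\[
\bigl( \langle \xi, \eta\rangle_{\mathcal L}\bigr)_{ij} = \sum_{k=1}^n \xi_{ki}^\dagger \cdot \eta_{kj} = (\xi^\dagger \cdot \eta)_{ij},
\]
so the inner product on matrices is nothing but matrix multiplication of $\xi^\dagger$ and $\eta$ in $M_n(\mathcal B)$.

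First I would check the algebraic conditions (1)--(3) of Definition \ref{d:herope}. Right $\mathcal B$-linearity follows from $\xi^\dagger(\eta b) = (\xi^\dagger \eta)b$; $\mathbb C$-linearity in the second variable is immediate; and hermitian symmetry follows from $(\xi^\dagger \eta)^\dagger = \eta^\dagger (\xi^\dagger)^\dagger = \eta^\dagger \xi$ since $\dagger$ is an involution on $\mathcal B$. Next I would verify the two correspondence conditions of Definition \ref{d:opstarcor}: associativity $a\cdot(\xi \cdot b) = (a\cdot \xi)\cdot b$ is just associativity in $\mathcal B$, and
\[
\langle a\cdot \xi, \eta\rangle_{\mathcal L} = (a\xi)^\dagger \eta = \xi^\dagger a^\dagger \eta = \langle \xi, a^\dagger \cdot \eta\rangle_{\mathcal L}.
\]

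The only point that requires the operator space structure is the Cauchy--Schwarz inequality (4). Here the plan is to combine two already-available facts: submultiplicativity of the matrix norms on the operator algebra $\mathcal B$, and the complete isometry of $\dagger : \mathcal B \to \mathcal B$. Using the identity $\langle \xi, \eta\rangle_{\mathcal L} = \xi^\dagger \cdot \eta$ in $M_n(\mathcal B)$ together with Definition \ref{d:opealg}(2) and the defining property of an operator $*$-algebra, we get
\[
\| \langle \xi, \eta\rangle_{\mathcal L} \|_{\mathcal B} \;=\; \| \xi^\dagger \cdot \eta \|_{M_n(\mathcal B)} \;\leq\; \| \xi^\dagger \|_{M_n(\mathcal B)} \cdot \| \eta \|_{M_n(\mathcal B)} \;=\; \| \xi \|_{\mathcal L} \cdot \| \eta \|_{\mathcal L},
\]
since the matrix norms on $\mathcal L$ are inherited from $\mathcal B$. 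This yields the required Cauchy--Schwarz estimate at every matrix level.

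I do not expect any serious obstacle: the proposition is essentially a bookkeeping check that the abstract axioms of an operator $*$-correspondence become tautological when the module and its acting algebras all sit inside a single operator $*$-algebra and all operations are inherited from that algebra. The only mildly non-trivial step is recognising that the matrix-level Cauchy--Schwarz inequality is precisely the operator algebra submultiplicativity applied after using that $\dagger$ preserves matrix norms.
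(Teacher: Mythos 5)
Your verification is correct, and it is essentially the intended argument: the paper states this proposition without proof precisely because it reduces to the bookkeeping you carry out, with the one substantive point being that the matrix-level inner product $(\inn{\xi,\eta}_{\C L})_{ij}=\sum_k \xi_{ki}^\da\cd\eta_{kj}$ is the matrix product $\xi^\da\cd\eta$ in $M_n(\C B)$, so that Cauchy--Schwarz follows from submultiplicativity of the operator algebra norms together with the complete isometry of $\da$. Nothing is missing, since the bimodule axioms and the facts that $\C A=\C A^\da$ and that the matrix norms on $\C L$ are inherited from $\C B$ are already established in the paragraph preceding the proposition.
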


\section{Examples}
In this section we give a range of examples arising from the geometry of Riemannian manifolds, the action of a discrete group thereon, connections on abstract Hilbert $C^{*}$-modules and Pimsner algebras. 
The reason for presenting them here is their formal similarity, which is related to the standard form of operator $*$-correspondences discussed in Section \ref{s:charcor}.

\subsection{Differentiable sections of vector bundles}\label{ss:difsec}
Let $M$ be a Riemannian manifold (without boundary) and let $E \to M$ be a smooth, hermitian complex vector bundle equipped with a hermitian connection $\Na : \Ga^\infty(E) \to \Ga^\infty(E) \ot_{C^\infty(\C M)} \Ga^\infty(T^* M)$. We denote the hermitian form on the smooth sections of $E$ by $\inn{\cd,\cd}_E : \Ga^\infty(E) \ti \Ga^\infty(E) \to C^\infty( M)$ and the exterior derivative with values in smooth one-forms by $d : C^\infty(M) \to \Ga^\infty(T^*M)$. For the convenience of the reader we recall that the hermitianness of $\Na$ can be described by the equation:
\[
d(\inn{s,r}_E) = \big(s,\Na(r)\big)_E + \big(r,\Na(s)\big)_E^\da, \q s,r \in \Ga^\infty(E),
\]
where the pairing $(\cd,\cd)_E : \Ga^\infty(E) \ti \Ga^\infty(E) \ot_{C^\infty(M)} \Ga^\infty(T^*M) \to \Ga^\infty(T^*M)$ is determined by $(s,r \ot \om)_E := \inn{s,r}_E \cd \om$ and the involution $\da$ refers to complex conjugation of one-forms: $(df)^\da := d(\ov f)$.
\medskip

In this example we shall see how to associate an operator $*$-correspondence to this data. To be more precise, the operator $*$-algebra acting from the left will consist of $C^1$-sections of the endomorphism bundle vanishing at infinity whereas the operator $*$-algebra acting from the right consists of $C^1$-functions on the manifold $M$ vanishing at infinity. The operator $*$-correspondence is then given by $C^1$-sections of the vector bundle $E$ vanishing at infinity. We remark that the terminology ``vanishing at infinity'' means that the derivative also vanishes at infinity and this will make our construction depend on the behaviour ``at infinity'' of the hermitian connection and the Riemannian metric. Let us present the details:
\medskip

We recall from Example \ref{ex: conbun} that the notation $C^1_0(M)$ refers to the operator $*$-algebra of $C^1$-functions on $M$ vanishing at infinity and with exterior derivative vanishing at infinity. 
\medskip

We now introduce the relevant operator $*$-algebra of differentiable sections of the endomorphism bundle $\T{End}(E) \to M$:

Define the Hilbert $C^*$-module $\Ga_0(E) \op \big( \Ga_0(E) \hot_{C_0(M)} \Ga_0(T^*M)\big)$ over $C_0(M)$ as the completion of the $C_c^\infty(M)$-module $\Ga_c^\infty(E) \op \big( \Ga_c^\infty(E) \ot_{C_c^\infty(M)} \Ga_c^\infty(T^*M) \big)$ with respect to the inner product
\[
\binn{\ma{c}{s_0 \\ r_0 \ot \om_0}, \ma{c}{s_1 \\ r_1 \ot \om_1}} 
:= \inn{s_0,s_1}_E + \inn{r_0,r_1}_E \cd \inn{\om_0,\om_1}_{T^*M}.
\]
Here we recall that $\inn{\cd,\cd}_{T^* M}$ refers to the hermitian form coming from the Riemannian metric on $M$. We let $\mbox{End}^{*}_{C_{0}(M)}\Big( \Ga_0(E) \op \big( \Ga_0(E) \hot_{C_0(M)} \Ga_0(T^*M) \big) \Big)$ denote the $C^*$-algebra of bounded adjointable endomorphisms of the Hilbert $C^*$-module $\Ga_0(E) \op \big( \Ga_0(E) \hot_{C_0(M)} \Ga_0(T^*M) \big)$. The $C^*$-norm on this $C^*$-algebra is the operator norm denoted by $\| \cd \|_\infty$.

Define the injective algebra homomorphism 
\[
\si : \Ga_c^\infty(\T{End}(E)) \to \mbox{End}^{*}_{C_{0}(M)}\Big( \Ga_0(E) \op \big( \Ga_0(E) \hot_{C_0(M)} \Ga_0(T^*M) \big) \Big),
\]
by the formula
\[
\si : a \mapsto \ma{cc}{ a & 0 \\ \, [ \Na, a  ] & a \ot 1},
\]
where the commutator is really $[\Na, a] := \Na a - (a \ot 1) \Na$. The operator $*$-algebra $\Ga_0^\Na(\T{End}(E))$ is then defined as the completion of $\Ga_c^\infty(\T{End}(E))$ with respect to the matrix norms 
\[
\| a \|_1 := \max\big\{ \| \si(a) \|_\infty, \, \| \si(a^\da) \|_\infty \}, \q a \in M_n\big( \Ga_c^\infty(\T{End}(E)) \big).
\]
Remark that the involution $\da$ on $\Ga_c^\infty(\T{End}(E))$ (and hence also on $\Ga_0^\Na(\T{End}(E))$) is given by the point-wise adjoint operation with respect to the fiber-wise inner products $\inn{\cd,\cd}_{E_x} : E_x \ti E_x \to \cc$, $x \in M$. 
\medskip

We are now ready to introduce our operator $*$-correspondence of differentiable sections of $E \to M$. Recall that we consider the direct sum $C_0(M) \op \Ga_0(T^* M)$ of $C_0(M)$-modules as a Hilbert $C^*$-module over $C_0(M)$, see Example \ref{ex: conbun}.

To each $s \in \Ga_c^\infty(E)$ we may associate two bounded adjointable operators
\[
\begin{split}
\pi(s) & := \ma{cc}{|s\rangle  & 0 \\ |\Na(s)) & |s)} : 
C_0(M) \op \Ga_0(T^*M) \to \Ga_0(E) \op \big( \Ga_0(E) \hot_{C_0(M)} \Ga_0(T^*M) \big)
\q \T{and} \\
\pi^t(s)  & := \ma{cc}{\bra{s} & 0 \\ (\Na(s)| & (s| }:
\Ga_0(E) \op \big( \Ga_0(E) \hot_{C_0(M)} \Ga_0(T^*M) \big)
\to C_0(M) \op \Ga_0(T^*M),
\end{split}
\]
 defined as follows: For $f \in C_c^\infty(M)$, $\eta \in \Ga_c^\infty(T^{*}M)$ and $r \in \Ga_c^\infty(E)$, $\xi \in \Ga_c^\infty(E) \ot_{C^\infty_c(M)} \Ga_c^\infty(T^*M)$ we have 
\[
\pi(s)\ma{cc}{f \\ \eta}  := \ma{cc} {sf \\ \nabla(s)f+s\otimes \eta }\quad\mbox{and}\quad \pi^t(s)\ma{c}{r\\ \xi} := \ma{c}{ \inn{s,r}_E \\ \big(r, \Na(s)\big)_E^\da + (s,\xi)_E}.
\]

We define the operator $*$-correspondence $\Ga_0^\Na(E)$ from $\Ga_0^\Na(\T{End}(E))$ to $C_0^1(M)$ as the completion of $\Ga_c^\infty(E)$ with respect to the matrix norms 
\[
\| s \|_1 := \max\big\{ \| \pi( s ) \|_\infty , \, \| \pi^t(s) \|_\infty \big\} ,\q s \in M_n( \Ga_c^\infty(E)),
\]
where $\| \cd \|_\infty$ refers to the operator norm on the bounded adjointable operators from $C_0(M) \op \Ga_0(T^*M)$ to $\Ga_0(E) \op \big( \Ga_0(E) \hot_{C_0(M)} \Ga_0(T^*M) \big)$ and from $\Ga_0(E) \op \big( \Ga_0(E) \hot_{C_0(M)} \Ga_0(T^*M) \big)$ to $C_0(M) \op \Ga_0(T^*M)$. The inner product on $\Ga_0^\Na(E)$ is induced by the hermitian form $\inn{\cd,\cd}_E$ on $\Ga_c^\infty(E)$ and the bimodule structure is induced by the usual $(\Ga^\infty_c(\T{End}(E)),C_c^\infty(M))$-bimodule structure on $\Ga_c^\infty(E)$. The various estimates needed to verify that $\Ga_0^\Na(E)$ is indeed an operator $*$-correspondence follow from the identities:
\begin{enumerate}
\item $\pi(s \cd f) = \pi(s) \cd \rho(f)$ and $\pi^t(s \cd f) = \rho(f^\da) \cd \pi^t(s)$;
\item $\pi(a \cd s) = \si(a) \cd \pi(s)$ and $\pi^t(a \cd s) = \pi^t(s) \cd \si(a^\da)$;
\item $\pi^t(s) \cd \pi(t) = \rho( \inn{s,t}_E)$, 
\end{enumerate}
which are valid for all $s,t \in \Ga_c^\infty(E)$, $a \in \Ga_c^\infty( \T{End}(E))$ and all $f \in C_c^\infty(M)$. (Recall here that the algebra homomorphism $\rho : C_c^\infty(M) \to \T{End}^*_{C_0(M)}( C_0(M) \op \Ga_0(T^* M))$ was defined in Example \ref{ex: conbun}.)

\subsection{Vertical connections on fiber bundles}
Let $\pi : M \to N$ be a smooth fiber bundle with model fiber $Z$ and let $E \to M$ be a smooth, hermitian complex vector bundle.  We define the \emph{vertical tangent fields} by
\[
\Ga^\infty(T (M/ N) ) := \big\{ X \in \Ga^\infty(T M) \mid X(f \ci \pi) = 0 \, , \, \, \forall f \in C^\infty(N) \big\},
\]
where $T M \to M$ and $T N \to N$ denote the tangent bundles over $M$ and $N$, respectively. The vertical tangent fields can then be identified with the smooth sections of the kernel of the differential $d \pi : T  M \to \pi^*( T  N)$. Thus, $T( M/ N) := \T{Ker}(d \pi)$.

We define the \emph{vertical one-forms} as the $C^\infty(M)$-linear maps:
\[
\Ga^\infty( T^*( M/N) ) := \T{Hom}_{C^\infty( M)}\big( \Ga^\infty(T(M/N)), C^\infty( M)\big).
\]
The vertical one-forms may be identified with the smooth sections of the dual of $T(M/N)$. We recall that the vertical exterior derivative $d_v : C^\infty(M) \to \Ga^\infty( T^*( M/N) )$ is defined by $d_v(f) := (d f)|_{\Ga^\infty(T(M/N))}$ thus by restricting the exterior derivative to the vertical tangent fields.

We suppose that the fiber bundle $\pi : M \to  N$ comes equipped with a Riemannian metric in the fiber direction. To be precise, we suppose that the smooth vector bundle $T(M/ N) \to M$ comes equipped with a hermitian form 
\[
\inn{\cd,\cd}_{T(M/N)} : \Ga^\infty(T (M/N)) \ti \Ga^\infty(T(M/N)) \to C^\infty(M),
\]
such that $\inn{X,Y}_{T(M/N)} \in C^\infty(M,\rr)$ whenever $X,Y$ are real vertical vector fields. 

We suppose that we have a \emph{vertical connection} on $E \to M$:
\[
\Na : \Ga^\infty(E) \to \Ga^\infty(E) \ot_{C^\infty(M)} \Ga^\infty(T^*(M/N)).
\]
Thus, we have the relation:
\[
\Na(s \cd f) = s \ot d_v(f) + \Na(s) \cd f,
\]
for all $s \in \Ga^\infty(E)$ and $f \in C^\infty( M)$. We do not assume that $\Na$ is hermitian in any sense.
\medskip

We shall now see how to define an operator $*$-correspondence $\C X$ which captures the analytic content of the above setting. This time $\C X$ will consist of the sections of a field of Hilbert spaces over $N$ (with model fiber $L^2$-sections of the restriction of $E$ to the model fiber $Z$), moreover these sections are going to be differentiable in the fiber direction (in a precise way determined by the vertical connection $\Na$). The operator $*$-algebra acting on the left is given by those continuous functions on $M$ that are differentiable in the fiber direction and vanish at infinity. The operator $*$-algebra acting on the right will be the $C^*$-algebra of continuous functions on $N$ vanishing at infinity. Let us give the stringent mathematical definitions:
\medskip

For each $x \in N$ we define the manifold $M_x := \pi^{-1}( \{x\})$ by requiring each local trivialization $\phi_U : \pi^{-1}(U) \to U \ti Z$ with $x \in U \su N$ to induce a diffeomorphism $p_2 \ci \phi_U : M_x \to Z$, where $p_2 : U \ti Z \to Z$ is the projection onto the second factor. The inclusion $i_x :  M_x \to  M$ is then smooth and the derivative induces an isomorphism
\[
d (i_x) : \Ga^\infty( T(M_x) ) \to \Ga^\infty\big( T( M/N)|_{M_x} \big),
\]
of $C^\infty(M_x)$-modules. In this way our Riemannian metric in the fiber direction induces a Riemannian metric
\[
\inn{\cd,\cd}_{T(M_x)} : \Ga^\infty( T( M_x) ) \ti \Ga^\infty( T(M_x) ) \to C^\infty(M_x),
\]
on $M_x$. We may thus define the $C^\infty_c( N)$-linear map $\rho : C^\infty_c( M) \to C^\infty_c(N)$ by ``integration over the fiber'':
\[
\rho(f)(x) := \int_{ M_x} f \, d \mu_x, \q f \in C^\infty_c(M) \,  , \, \, x \in  N,
\]
where the measure $\mu_x$ comes from the Riemannian metric on $M_x$. Notice here that $C^\infty_c(M)$ becomes a module over $C^\infty_c(N)$ using the pull-back along the smooth map $\pi : M \to  N$ together with the algebra structure on $C^\infty(M)$. 

We define the Hilbert $C^*$-module $X$ over $C_0(N)$ by taking the completion of $\Ga^\infty_c(E)$ with respect to the $C_0(N)$-valued inner product defined by
\[
\inn{s_0,s_1}_X := \rho( \inn{s_0,s_1}_E ), \q s_0,s_1 \in \Ga^\infty_c(E),
\]
where $\inn{\cd,\cd}_E$ denotes the hermitian form on $\Ga^\infty(E)$. The right-action of $C_0( N)$ on $X$ is induced by the right action of $C_c^\infty(N)$ on $\Ga^\infty_c(E)$ defined using the pull-back along $\pi : M \to N$ and the module structure of $\Ga^\infty(E)$. Similarly, we define the Hilbert $C^*$-module $Y$ over $C_0(N)$ as the completion of $\Ga_c^\infty(E) \ot_{C^\infty_c( M)} \Ga^\infty_c(T^* (M/N))$ with respect to the $C_0(N)$-valued inner product given by
\[
\inn{s_0 \ot \om_0, s_1 \ot \om_1}_Y :=  \rho\big( \inn{s_0, s_1}_E \cd \inn{\om_0,\om_1}_{T^* (M/N)} \big),
\]
where the hermitian form $\inn{\cd,\cd}_{T^*(M/N)}$ on the vertical one-forms is constructed from $\inn{\cd,\cd}_{T( M/N)}$ using the musical isomorphisms $T(M/N) \cong T^*(M/N)$ of vertical tangent vectors and vertical cotangent vectors. The right action of $C_0(N)$ on $Y$ comes from the right action of $C^\infty(M)$ on $\Ga^\infty(T^*(M/N))$ using the pull-back $\pi^* : C_c^\infty(N) \to C^\infty( M)$.

We define the algebra homomorphism
\[
\si : C_c^\infty( M) \to \mbox{End}^{*}_{C_{0}(N)}( X \op Y) ,\q \si(f) := \ma{cc}{f & 0 \\ \, [\Na, f] & f}.
\]
Remark that the commutator $[\Na,f] : \Ga_c^\infty(E) \to \Ga_c^\infty(E) \ot_{C^\infty_c(M)} \Ga^\infty_c(T^* M/N)$ does define a bounded adjointable operator from $X$ to $Y$. Indeed, it follows from the Leibniz rule for $\Na$ that this commutator is simply given by $s \mapsto s \ot d_v f$. We then obtain an operator $*$-algebra $C^{1,v}_0( M)$ as the completion of $C_c^\infty(M)$ with respect to the matrix norms
\[
\| f \|_{1,v} := \max\big\{ \| \si(f) \|_\infty , \, \| \si(f^\da) \|_\infty \big\} ,\q f \in M_n( C_c^\infty(M)).
\]
where $\| \cd \|_\infty$ refers to the operator norm. Notice that the involution $\da$ on $C^{1,v}_0(M)$ and hence also on $M_n(C_0^{1,v}( M))$ comes from complex conjugation of smooth functions on $M$.

We define the linear map
\[
\te : \Ga_c^\infty(E) \to \mbox{Hom}^{*}_{C_{0}(N)}\big( C_0(N) , X \op Y\big),
\q \te(s) = \ma{c}{s \\ \Na(s) }.
\]
where $\mbox{Hom}^{*}_{C_{0}(N)}\big( C_0(N) , X \op Y\big)$ denotes the bounded adjointable operators from $C_0(N)$ to $X \op Y$. The operator $*$-correspondence $\C X$ from $C^{1,v}_0(M)$ to $C_0(N)$ is then given by the completion of $\Ga_c^\infty(E)$ with respect to the matrix norms
\[
\| s \|_{1,v} := \| \te(s) \|_\infty, \q s \in M_n( \Ga_c^\infty(E)).
\]
The inner product on $\C X$ is induced by the hermitian form $\inn{\cd,\cd}_E$ on $\Ga^\infty(E)$. The left action of $C^{1,v}_0(M)$ comes from the module structure of $\Ga^\infty(E)$ over $C^\infty(M)$ whereas the right action of $C_0(N)$ comes from this same module structure in combination with the pull-back along $\pi : M \to N$. The fact that our operator $*$-correspondence satisfies the required norm-estimates relies on the identities:
\begin{enumerate}
\item $\te(s \cd g) = \te(s) \cd g$;
\item $\te(f \cd s) = \si(f) \cd \te(s)$,
\end{enumerate}
which are valid for all $s \in \Ga_c^\infty(E)$, $g \in C_c^\infty(N)$ and $f \in C_c^\infty(M)$.

\subsection{Crossed products by discrete groups}
Throughout this example we let $M$ be a Riemannian manifold and we let $G$ be a discrete countable group. We will assume that we have a right-action $M \ti G \to M$ by diffeomorphisms of $M$. The diffeomorphism associated to a $\ga \in G$ will be denoted by $\phi_\ga : M \to M$, $\phi_\ga(x) = x \cd \ga$. For each $\ga \in G$ we let $(d \phi_\ga)(x) : T_x M \to T_{\phi_\ga(x)} M$ denote the derivative of $\phi_\ga : M \to M$ evaluated at the point $x \in M$. The following extra conditions will be in effect:

\begin{assu}\label{a:grpact}
It will be assumed that
\begin{enumerate}
\item The action of $G$ on $M$ is proper;
\item The action of $G$ on $M$ is isometric, thus
\[
\| (d \phi_\ga)(x) \|_x = 1,
\]
for all $\ga \in G$ and $x \in M$, where $\| \cd \|_x : B( T_x M, T_{\phi_\ga(x)} M) \to [0,\infty)$ refers to the operator norm.
\end{enumerate}
\end{assu}

For each $\ga \in G$ we define the $*$-isomorphism $\al_\ga : C_c^\infty(M) \to C_c^\infty(M)$ by $\al_\ga : f \mapsto f \ci \phi_\ga$. Notice that $\al_\ga \ci \al_\tau = \al_{\ga \tau}$ for all $\ga,\tau \in G$.
\medskip

We are now going to construct an operator $*$-correspondence $\C X$ which links a $C^1$-version of the reduced crossed product of $C_0(M)$ by the group $G$ and the $C^1$-functions vanishing at infinity on the quotient $M/G$. At the $C^*$-algebraic level this kind of correspondence plays a fundamental role for the construction of the Baum-Connes assembly map, see \cite{BaCoHi:CPK,KaSk:GBN}. For more details on the present operator $*$-correspondence and how it links to a differentiable version of the Morita equivalence between crossed products and quotient spaces (in the case where the action is also free) we refer to \cite{Kaa:MEU}.
\medskip

We consider the $*$-algebra $C_c(G, C_c^\infty(M))$ consisting of finite sums $\sum_{\ga \in G} f_\ga U_\ga$ of elements in $C_c^\infty(M)$ indexed by the discrete group $G$ and equipped with the usual convolution $*$-algebra structure.

 We define the Hilbert $C^*$-module $\ell^2\big(G, C_0(M) \op \Ga_0(T^*M)\big)$ over $C_0(M)$ as the exterior tensor product of $C_0(M) \op \Ga_0(T^* M)$ by the Hilbert space of $2$-summable sequences indexed by $G$. Recall here that the Riemannian metric provides $\Ga_0(T^*M)$ with the structure of a Hilbert $C^*$-module over $C_0(M)$, see Example \ref{ex: conbun}. We define the injective algebra homomorphism
\[
\begin{split}
& \si : C_c\big( G, C_c^\infty(M) \big) \to \mbox{End}^{*}_{C_0(M)}\Big(  \ell^2\big(G, C_0(M) \op \Ga_0(T^*M)\big) \Big), \\
& \si( f U_\ga) \big( \ma{c}{h \\ \om} \ot \de_\tau \big)
:= \ma{cc}{\al_{\ga \tau}^{-1}(f) & 0 \\ d\big( \al_{\ga \tau}^{-1}(f) \big) & \al_{\ga \tau}^{-1}(f) } \cd \ma{c}{h \\ \om} \ot \de_{\ga \tau}.
\end{split}
\]
Remark that the fact that $\si(f U_\ga)$ is indeed a bounded adjointable operator relies on condition $(2)$ of Assumption \ref{a:grpact}.

The reduced crossed product operator $*$-algebra $C^1_0(M) \rtimes_r G$ is then defined as the completion of $C_c(G,C_c^\infty(M))$ with respect to the matrix norms:
\[
\| x \|_1 := \max\big\{ \| \si(x) \|_\infty \,  , \, \, \| \si(x^\da) \|_\infty \big\} ,
\q x \in M_n\big( C_c\big( G, C_c^\infty(M) \big)\big),
\]
where the involution $\da$ on $C_c\big( G, C_c^\infty(M) \big)$ and hence also on $C^1_0(M) \rtimes_r G$ is given by the usual involution on $C_c\big( G, C_c^\infty(M) \big)$. To wit: $(f U_\ga)^\da := \al_{\ga}^{-1}(\, \ov{f} \, ) U_{\ga^{-1}}$.
\medskip

We now consider the $*$-algebra $C^\infty_c(M/G)$ consisting of all $G$-invariant smooth maps $f : M \to \cc$ such that the induced map $[f] : M / G \to \cc$ has compact support. Define the algebra homomorphism
\[
\rho : C^\infty_c(M/ G) \to \mbox{End}^{*}_{C_0(M)}\big( C_0(M) \op \Ga_0(T^*M) \big), \q \rho(f) := \ma{cc}{f & 0 \\ df & f}.
\]
Remark that Assumption \ref{a:grpact} implies that the section $df \in \Ga^\infty(T^* M)$ is automatically bounded with respect to the hermitian form $\inn{\cd,\cd}_{T^*M}$ and hence that $\rho(f)$ is a bounded adjointable operator. We define the operator $*$-algebra $C^1_0(M/G)$ as the completion of $C^\infty_c(M/G)$ with respect to the matrix norms
\[
\| f \|_1 := \max\big\{ \| \rho(f) \|_\infty \, , \, \, \| \rho(f^\da) \|_\infty \big\}, \q f \in M_n\big( C^\infty_c(M/G) \big).
\]
where the involution $\da$ on $C^1_0(M/G)$ is induced by the involution on $C^\infty(M)$ given by point-wise complex conjugation. 

We equip the vector space $C_c^\infty(M)$ with the structure of a $C_c(G, C_c^\infty( M))$-$C_c^\infty( M/G)$-bimodule with left action defined by
\[
( f U_\ga ) \cd h :=  f \cd \al_\ga(h) \q  f,h \in C_c^\infty(M) \, , \,\, \ga \in G ,
\]
and with right action induced by the algebra structure on $C^\infty(M)$. Furthermore, we define the pairing
\begin{equation}\label{eq:rinn}
\inn{\cd,\cd} : C_c^\infty(M) \ti C_c^\infty(M) \to C_c^\infty(M/G), \q \inn{g,h} := \sum_{\ga \in G} \al_\ga(\ov{g} \cd h). 
\end{equation}
Remark that this pairing is well-defined since the action of $G$ on $M$ is assumed to be proper.

To construct matrix norms on $C_c^\infty(M)$ we define the linear maps
\[
\begin{split}
& \pi : C_c^\infty(M) \to \mbox{Hom}^{*}_{C_0(M)}\big( C_0(M) \op \Ga_0(T^*M), \ell^2\big( G, C_0(M) \op \Ga_0(T^*M) \big) \big), \\
& \pi(g) \ma{c}{h \\ \om} := \sum_{\ga \in G} \ma{cc}{\al_{\ga}^{-1}(g) & 0 \\ d( \al_{\ga}^{-1}(g)) & \al_\ga^{-1}(g) } 
\cd \ma{c}{h \\ \om} \ot \de_\ga, \\
& \pi^t : C_c^\infty(M) \to \mbox{Hom}^{*}_{C_0(M)}\Big( \ell^2\big( G, C_0(M) \op \Ga_0(T^*M)\big), C_0(M) \op \Ga_0(T^*M) \Big), \\
& \pi^t(g)\Big( \ma{c}{h \\ \om} \ot \de_\ga \Big) := \ma{cc}{\al_\ga^{-1}(g) & 0 \\ d( \al_{\ga}^{-1}(g)) & \al_\ga^{-1}(g) }\cd \ma{c}{h \\ \om} \, ,
\end{split}
\]
where we recall that the notation $\T{Hom}^*_B(X,Y)$ refers to the bounded adjointable operators between two Hilbert $C^*$-modules $X$ and $Y$ over a $C^*$-algebra $B$. Remark that the fact that $\pi(g)$ and $\pi^t(g)$ are indeed bounded (adjointable) operators relies on Assumption \ref{a:grpact}.

We then define the operator $*$-correspondence $\C X$ from $C^1_0(M) \rtimes_r G$ to $C^1_0(M/G)$ as the completion of $C_c^\infty(M)$ with respect to the matrix norms:
\[
\| g \|_1 := \T{max}\big\{ \| \pi(g) \|_\infty \, , \, \, \| \pi^t(g^\da) \|_\infty  \big\}, \q g \in M_n(C_c^\infty(M)),
\]
where the $\da$ refers to the involution on $M_n(C_c^\infty(M))$ coming from complex conjugation of functions (together with the transpose operation on matrices). The inner product on $\C X$ is induced by the pairing $\inn{\cd,\cd}$ defined in \eqref{eq:rinn} and the bimodule structure is induced by the $C_c(G, C_c^\infty(M))$-$C_c^\infty(M/G)$-bimodule structure on $C_c^\infty(M)$. Indeed, the estimates required to show that $\C X$ is an operator $*$-correspondence follow from the algebraic identities:
\begin{enumerate}
\item $\pi^t(g) \cd \pi(h) = \rho( \inn{g,h})$;
\item $\pi(g \cd f) = \pi(g) \cd \rho(f)$ and $\pi^t(g \cd f) = \rho(f^\da) \cd \pi^t(g)$;
\item $\pi(x \cd g) = \si(x) \cd \pi(g)$ and $\pi^t(x \cd g) = \pi^t(g) \cd \si( x^\da)$,
\end{enumerate}
which hold for all $g,h \in C_c^\infty(M)$, $f \in C_c^\infty(M/G)$ and all $x \in C_c\big(G, C_c^\infty(M)\big)$.

\subsection{Hermitian connections on Hilbert $C^{*}$-modules}\label{Ex: Hilher}
Let $C$ be a unital $C^*$-algebra and let $\de : \C B_\de \to C$ be a closed $*$-derivation as in Example \ref{Ex:der}. Let $B \su C$ denote the $C^*$-algebra obtained as the norm-closure of $\C B_\de \su C$ and let $X$ be a Hilbert $C^{*}$-module over $B$. Let $\Om(\C B_\de) \su C$ denote the smallest $C^*$-subalgebra of $C$ such that
\[
1\in \Om(\C B_\de), \q  B \su \Om(\C B_\de) \q \T{and} \q \de(b) \in \Om(\C B_\de) \, \, , \q \forall b \in \C B_\de.
\]
We consider $\Om(\C B_\de)$ as a $C^*$-correspondence from $B$ to $\Om(\C B_\de)$.  Indeed, since $\Om(\C B_\de)$ is a $C^*$-algebra it may be viewed as a Hilbert $C^*$-module over itself and it may then be equipped with the left action of $B$ coming from the inclusion $B \su \Om(\C B_\de)$. 

We recall, from Example \ref{Ex:der}, that $\C B_\de$ is an operator $*$-algebra when equipped with the $*$-algebraic structure inherited from $C$ and with the matrix norms defined via the injective algebra homomorphism
\[
\rho : \C B_\de \to M_2( \Om(\C B_\de) ), \q \rho : b \mapsto \ma{cc}{b & 0 \\ \de(b) & b} \, .
\]

Suppose we are given a dense $\C B_\de$-submodule $\C X\su X$ such that $\inn{x,y}_X \in \C B_\de$ for all $x,y \in \C X$. Suppose moreover that we are given a linear $\de$-connection
\[
\nabla_\de:\C X\to X\hot_{B}\Omega(\C B_\de) \, \, , \q \nabla_\de(x \cd b)=\nabla_\de(x) \cd b+x \otimes\delta(b) \, \, ,\q \forall x \in \C X \, \, , \, \, \, b\in \C B_\de,
\]
where $X \hot_B \Om(\C B_\de)$ denotes the interior tensor product of Hilbert $C^*$-modules. We assume that $\nabla_\de$ is \emph{hermitian}, that is for all $x,y \in \C X$ we have
\[
\langle x \otimes 1, \nabla_\de(y)\rangle - \langle\nabla_\de(x),y\otimes 1\rangle =\delta(\langle x,y\rangle) \, . 
\]

\begin{lemma}\label{l:closure} Let $\delta:\C B_\de \to C$ be a closed $*$-derivation and $\nabla_\de:\C X\to X\hot_{B}\Omega(\C B_\de)$ a hermitian $\de$-connection. Then $\nabla_{\de}$ is closable as an unbounded operator from $X$ to $X\hot_{B}\Omega(\C B_\de)$.
\end{lemma}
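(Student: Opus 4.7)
The plan is to prove closability in its sequential form: given a sequence $x_n \in \C X$ with $x_n \to 0$ in $X$ and $\nabla_\de(x_n) \to \eta$ in $X \hot_B \Om(\C B_\de)$, we must show that $\eta = 0$. The only structural input that relates $\nabla_\de$ to the ambient Hilbert $C^*$-module topology is the hermitian identity, and this will be combined with the hypothesis that $\de : \C B_\de \to C$ is a \emph{closed} $*$-derivation.

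Fix an arbitrary test element $y \in \C X$. Since $\inn{\C X, \C X}_X \su \C B_\de$, the element $\inn{y, x_n}$ lies in the domain of $\de$, and the hermitian identity applied to the pair $(y, x_n)$ rearranges to
\[
\de(\inn{y, x_n}) = \inn{y \ot 1, \nabla_\de(x_n)} - \inn{\nabla_\de(y), x_n \ot 1}.
\]
As $n \to \infty$ the right-hand side converges: the first term tends to $\inn{y \ot 1, \eta}$ by continuity of the $\Om(\C B_\de)$-valued inner product, and the second tends to $0$ because $\| x_n \ot 1 \| \leq \| x_n \|_X \to 0$ in the interior tensor product. Meanwhile $\inn{y, x_n} \to 0$ in $B$, so closedness of $\de$ forces $\inn{y \ot 1, \eta} = 0$.

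By density of $\C X$ inside $X$ together with continuity of the inner product in its first variable, we obtain $\inn{y \ot 1, \eta} = 0$ for every $y \in X$. Exploiting the right $\Om(\C B_\de)$-sesquilinearity of the inner product then gives $\inn{y \ot \om, \eta} = \om^* \cd \inn{y \ot 1, \eta} = 0$ for all $y \in X$ and $\om \in \Om(\C B_\de)$. Since the linear span of elementary tensors $y \ot \om$ is norm-dense in $X \hot_B \Om(\C B_\de)$, non-degeneracy of the inner product forces $\eta = 0$. The only point requiring care is the very first step: one must know that $\inn{y, x_n}$ lies in the domain of $\de$ so that $\de(\inn{y, x_n})$ is even defined, and this is precisely what the standing hypothesis on $\C X$ provides; everything else is a routine continuity manipulation.
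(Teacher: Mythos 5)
Your proof is correct and follows essentially the same route as the paper's: apply the hermitian identity to the pair $(y,x_n)$, use closedness of $\de$ to conclude that $\de(\inn{y,x_n})\to 0$ and hence that the limit pairs to zero against $y\ot 1$, then pass to elementary tensors $y\ot\om$ by $\Om(\C B_\de)$-linearity and density. The only differences are cosmetic (which slot of the inner product carries the conjugation, and your harmless extra step extending from $y\in\C X$ to $y\in X$, which is not needed since tensors $y\ot\om$ with $y\in\C X$ already span a dense subspace).
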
 
\begin{proof}
Let $\{x_{n}\}$ be a sequence in $\C X$ such that $x_{n}\to 0$ in $X$ and $\nabla_\de(x_{n})\to z$ in $X\hot_{B}\Omega(\C B_\de)$ for some $z \in X\hot_{B}\Omega(\C B_\de)$. We must show that $z=0$. 

For $y\in \C X$ we have
\[
\langle z, y\otimes 1\rangle=\lim_{n \to \infty} \langle \nabla_\de(x_n), y\otimes 1\rangle =\lim_{n \to \infty} \langle x_n\otimes 1,\nabla_\de(y)\rangle  -\delta(\langle x_{n},y\rangle)=-\lim_{n \to \infty} \delta(\langle x_{n},y\rangle) \, ,
\]
from which it follows that $\delta(\langle x_{n},y\rangle)$ is convergent in $C$. Since $\langle x_{n},y\rangle \to 0$ in $B$ and $\delta : \C B_\de \to C$ is closed, it follows that $\delta(\langle x_{n},y\rangle)\to 0$ in $C$ and hence that $\langle z, y\otimes 1\rangle=0$. Remark now that the submodule
\[
\mbox{span}_{\cc}\{y\otimes \omega : y\in\C X, \omega\in \Omega(\C B_{\de})\}\su X\hot_{B}\Omega(\C B_\de) \, ,
\]
is dense in $X\hot_{B}\Omega(\C B_\de)$. Since $\langle z, y\otimes \omega\rangle=\langle z, y\otimes 1\rangle \omega=0$, it holds that $\langle z,w\rangle=0$ for all $w\in X\hot_{B}\Omega(\C B_\de)$ and thus $z=0$.
\end{proof}

For each $\xi \in X\hot_{B}\Om(\C B_\de)$, we define the bounded adjointable operators
\[
\begin{split}
& \ket{\xi} : \Om(\C B_\de)\to X\hot_{B}\Om(\C B_\de) \, \, ,\quad \ket{\xi} : \om \mapsto \xi \cd \om, \\
& \bra{\xi} :   X\hot_{B}\Om(\C B_\de)\to \Om(\C B_\de) \, \, ,\quad \bra{\xi} : \eta \mapsto \inn{\xi,\eta}_{X \hot_B \Om(\C B_\de)}.
\end{split}
\]
Notice that $\ket{\xi}^* = \bra{\xi}$. For each $x \in \C X$, we then define the bounded adjointable operators
\[
\pi(x):=\ma{cc}{
|x \otimes 1\rangle & 0 \\
|\Na_\de(x)\rangle  & |x \otimes 1 \rangle
} : \Om(\C B_\de) \op \Om(\C B_\de)\to (X \hot_B \Om(\C B_\de) ) \op (X \hot_B \Om(\C B_\de) ) .
\]

The completion of the right $\C B_\de$-module $\C X$ with respect to the matrix-norms 
\[
x \mapsto \| \pi(x) \|_\infty \, \, , \q x \in M_n( \C X).
\]
is then an operator $*$-module over the operator $*$-algebra $\C B_\de$. The inner product is induced by the inner product on $X$. We denote this operator $*$-module completion by $\C X_{\Na}$. Remark that the required norm-estimates follow from the algebraic identities, which hold for all $x,y \in \C X$ and all $b \in \C B_\de$:

$$\pi(x \cd b) = \pi(x) \cd \rho(b),\quad U \pi(x)^* U \pi(y) = \rho( \inn{x,y} ),\quad\textnormal{with } U := \ma{cc}{0 & i \\ -i & 0}. $$

Since $\nabla_{\delta} : \C X \to X \hot_B \Om(\C B_\de)$ is closable (by Lemma \ref{l:closure}), the inclusion $\C X \to X$ extends to a completely bounded and injective right-module map $\C X_{\Na} \to X$. The image of $\C X_{\Na}$ in $X$ then agrees with the domain of the closure of $\Na_\de : \C X \to X \hot_B \Om(\C B_\de)$, and we denote the closure by  $$\Na_\de : \C X_\Na \to X \hot_B \Om(\C B_\de),$$ which is now a completely bounded map. The injective $*$-homomorphism
\[
\mbox{End}^{*}_{B}(X)\to \mbox{End}^{*}_{\Omega(\C B_\de)}(X\hot_{B}\Omega(\C B_\de)) \, \, , \q T\mapsto T\ot 1,
\]
allows us to view $\mbox{End}^{*}_{B}(X)$ as a $C^{*}$-subalgebra of $\mbox{End}^{*}_{\Omega(\C B_\de)}(X\hot_{B}\Omega(\C B_\de))$. The operators $$\nabla_{\de} T - (T\ot 1)\nabla_{\de} : \C X_\Na \to X \hot_B \Om(\C B_\de),$$ are $\C B_{\de}$ linear and thus extend to operators
$$\de_{\Na}^{0}(T): \C X_\Na\ot_{\C B_{\de}}^{\textnormal{alg}}\Om(\C B_\de)\to X \hot_B \Om(\C B_\de), $$
via
 \[
\de_\Na^0(T) (x\otimes \omega):= \big((\nabla_\de \cdot T-(T\ot 1)\Na_\de)x \big)\cdot \omega \, .
\]

Consider the subalgebra $\sD$ of $\mbox{End}^{*}_{B}(X)$ defined by  
\[
\begin{split}
T \in \sD & \lrar
\Big( T\in \mbox{End}^{*}_{B}(X) \, , \, \, T\C X_{\nabla}\su \C X_{\nabla} \\
& \qq \T{and} \, \, \, \delta_{\nabla}^{0}(T) \mbox{ admits a bounded extension } \delta_{\nabla}(T): X\hot_{B}\Omega(\C B_\de) \to X\hot_{B}\Omega(\C B_\de) \Big).
\end{split}
\]

Define the $*$-subalgebra $\T{Dom}(\de_\Na) \su \mbox{End}^{*}_{B}(X)$ by
\[
T \in \T{Dom}(\de_\Na) \lrar T, T^* \in \sD.
\]
We wish to show that the map $T\mapsto \delta_{\nabla}(T)$ is a closed $*$-derivation
\begin{equation}\nonumber 
\delta_{\nabla} : 
\mbox{Dom}(\delta_{\nabla}) \rightarrow \mbox{End}_{\Omega(\C B_\delta)}^{*}(X\hot_{B}\Omega(\C B_{\delta})).
\end{equation}
 
Indeed, since $\nabla_\de$ is hermitian we may compute as follows:
 \begin{align*}
\langle \de_\Na(T) ( x\otimes \omega) , y\otimes \eta \rangle  
& = \om^* \cd \langle \Na_\de( Tx) , y \ot 1 \rangle \cd \eta 
- \om^* \cd \langle \Na_\de (x), T^* y \ot 1 \rangle \cd \eta \\ 
& = \om^* \cd \inn{ T x \ot 1, \Na_\de(y) } \cd \eta
- \om^* \cd \de\big( \inn{T x, y}\big) \cd \eta  \\
& \qqq - \om^* \cd \inn{x \ot 1, \Na_\de(T^* y)} \cd \eta 
+ \om^* \cd \de\big( \inn{ x, T^* y} \big) \cd \eta \\
& = - \inn{x \ot \om, \de_\Na(T^*)(y \ot \eta)},
 \end{align*}
for all $x,y \in \C X_\Na$, $\om, \eta \in \Om(\C B_\de)$. Thus, the everywhere defined bounded operator $\delta_{\nabla}(T)$ has an everywhere defined bounded adjoint $-\delta_{\nabla}(T^{*})$, and 
we have a closed $*$-derivation $\de_\Na : \T{Dom}(\de_\Na) \to \mbox{End}_{\Omega(\C B_\delta)}^{*}(X\hot_{B}\Omega(\C B_{\delta}))$. 
As in Example \ref{Ex:der} we equip $\T{Dom}(\de_\Na) \su \T{End}^*_B(X)$ with the structure of an operator $*$-algebra which we denote by $\T{Lip}(\de_\Na)$.

For $T \in \T{Lip}(\delta_{\nabla})$ the operation $x \mapsto Tx$, $x \in \C X_\Na$, then provides the operator $*$-module $\C X_\Na$ with the structure of an operator $*$-correspondence from $\T{Lip}(\de_\Na)$ to $\C B_\de$. Indeed, the fact that the left action satisfies the required estimates follows from the algebraic identity
\[
\ma{cc}{T \ot 1 & 0 \\ \de_\Na(T) & T \ot 1} \cd \ma{cc}{ \ket{x \ot 1} & 0 \\ \ket{\Na_\de(x)} & \ket{x \ot 1}}
= \ma{cc}{ \ket{Tx \ot 1} & 0 \\ \ket{\Na_\de(T x)} & \ket{T x \ot 1}},
\]
of bounded adjointable operators from $\Om(\C B_\de) \op \Om(\C B_\de)$ to $\big( X \hot_B \Om(\C B_\de) \big) \op \big( X \hot_B \Om(\C B_\de) \big)$. For any closed $*$-subalgebra $\A \su \T{Lip}(\delta_{\nabla})$, we thus obtain that $\C X_{\nabla}$ is an operator $*$-correspondence from $\A$ to $\C B_{\de}$.

\subsection{Pimsner algebras}
Let $C$ be a unital $C^*$-algebra and let $\C B_\de \su C$ be a unital $*$-subalgebra which comes equipped with a closed derivation $\de : \C B_\de \to C$ with $\de(b^*) = -\de(b)^*$. We let $B$ denote the unital $C^*$-subalgebra obtained as the completion of $\C B_\de$ in the $C^*$-norm coming from $C$. The unit in $B$ and the unit in $C$ are assumed to agree.

We briefly review the construction of the Cuntz-Pimsner algebra of a self-Morita equivalence bimodule, see \cite{Pim}. 
Thus, we consider a full right Hilbert $C^*$-module $E$ over $B$ together with a $*$-isomorphism
\[
\pi : B \to \mathbb{K}(E)\cong E\hot_{B}E^{*},
\]
turning $E$ into a $C^*$-correspondence from $B$ to $B$. In particular, we obtain that the conjugate module $E^*$ is a $C^*$-correspondence from $B$ to $B$ with right action and inner product given by
\[
\xi^* \cd b := \big(\pi(b^*)( \xi) \big)^* \, \, , \q
\inn{\xi^*,\eta^*} := \pi^{-1}( \xi\otimes \eta^{*}),
\]
for all $\xi,\eta \in E$, $b \in B$. For $n>0$ we use the notation
\[
E_{n}=E^{\hot n}:=\underbrace{E\hot_{B} \cdots \hot_{B} E}_\text{n times},
\]
for the $n$-fold interior tensor product of $E$ with itself. We use the conventions
\[
E_0:=B\quad\mbox{and}\quad E_n:=(E^{*})^{\hot|n|} := \underbrace{E^* \hot_B \cdots \hot_B E^*}_\text{-n times} \quad\mbox{for } n<0,
\]
and write $\pi_{n}:B\to \mbox{End}^{*}_{B}(E_n)$ for the left $B$ representation on $E_{n}$, $n \in \zz$.
The \emph{symmetrized Fock-space} 
\[
 X := \ov{ \bigoplus_{n \in \zz} E_{n}, }
\]
is the Hilbert $C^*$-module completion of the algebraic direct sum $\bigoplus_{n \in \zz} E_{n}$ of the Hilbert $C^*$-modules $E_{n}$, $n \in \zz$. The \emph{Pimsner algebra} $\C O_{E}$ is the $C^{*}$-subalgebra of $\mbox{End}^{*}_{B}(X)$ generated by the \emph{creation operators} $S_{\xi}:X\to X$, for $\xi\in E$, given on the homogenous components by  
 \begin{align}\label{creabea}
 S_{\xi} &: \xi_1 \otimes \ldots \otimes \xi_n \mapsto \xi \otimes \xi_1 \otimes \ldots \otimes \xi_n,\\ 
\nonumber S_{\xi} &:b\mapsto \xi\cdot b ,\\
\nonumber S_{\xi} &: \eta_1^* \otimes \ldots \otimes \eta_n^* \mapsto \inn{\xi^{*},\eta_1^{*}} \cd \eta_2^* \otimes \ldots \otimes \eta_n^* \, .
\end{align}
To construct a differentiable version of the Pimsner algebra $\C O_{E}$ and an associated operator $*$-correspondence, we will need the following:

\begin{assu}\label{a:pimsner}
It will be assumed that
\begin{enumerate}
\item There is a dense $\C B_{\delta}$-bimodule $\mathcal{E}\su E$ such that 
$\inn{\xi,\eta}, \inn{\xi^{*},\eta^{*}}\in \C B_{\delta}$ for all $\xi,\eta\in\mathcal{E}$.
\item There exist a finite number of elements $x_1,\ldots,x_k \in \mathcal{E}$ and $y_1,\ldots,y_l \in \mathcal{E}$ such that
\[
\sum_{j = 1}^k x_j \ot x_j^* = 1_E \q\mbox{and}\q \sum_{i = 1}^l \inn{y_i,y_i} = 1_B .
\]
\end{enumerate}
\end{assu}

We define the adjointable isometries
\begin{equation}\label{eq:isometry}
v_n : E_n \to\fork{ccc}{
B^{\op k^{n}} \q\eta  \mapsto (\langle x_{J},\eta\rangle)_{J\in\{1,\cdots , k\}^{n}} & x_J  := x_{j_1} \otimes \ldots \otimes x_{j_n} & \T{for } n > 0 \\
B^{\op l^{|n|}}  \q\eta^*  \mapsto (\langle y^{*}_{I},\eta^{*}\rangle)_{I\in\{1,\cdots, l\}^{|n|}} & y_I^* := y_{i_1}^* \otimes \ldots \otimes y_{i_{|n|}}^* & \T{for }  n<0,
}
\end{equation}
and let $v_0$ denote the identity operator on $E_0 := B$.

For $n > 0$ we introduce the notation
\[
\C E_n := \underbrace{\C E \ot_{\C B_\de} \cdots \ot_{\C B_\de} \C E}_\text{n times},
\]
and similarly
\[
\C E_0 := \C B_\de \q \T{and} \q 
\C E_n := \underbrace{\C E^* \ot_{\C B_\de} \cdots \ot_{\C B_\de} \C E^*}_\text{-n times} \quad\mbox{for } n<0.
\]
We remark that $\C E_n \su E_n$ for all $n \in \zz$ and that the adjointable isometries above restrict to injective right $\C B_\de$-linear maps
\begin{equation}\label{eq:restrict}
v_n : \C E_n \to \fork{ccc}{ \C B_\de^{\op k^n} & \T{for} & n \geq 0 \\ \C B_\de^{\op l^{|n|}} & \T{for} & n < 0,}
\end{equation}
(where $v_0$ is now the identity operator on $\C B_\de$).

As in Subsection \ref{Ex: Hilher}, let $\Om(\C B_\de) \su C$ denote the smallest $C^*$-subalgebra of $C$ such that 
\[
B \su \Om(\C B_\de) \q \T{and} \q \de(b) \in \Om(\C B_\de) \, \, , \q b \in \C B_\de.
\]

For each $n \in \zz$, we define the \emph{Gra\ss mann connection} $\Na_n : \C E_n \to E_{n} \hot_B \Om(\C B_\de) $  by
\[
\begin{split}\Na_{n}(\xi):=v_{n}^{*} \delta(v_{n}(\xi))&\q \Na_n : \xi \mapsto \fork{ccc}{
\sum_{J \in \{1,\ldots,k\}^n} x_J \otimes \de( \inn{x_J, \xi}) & \T{for} & n >0 \\
1_B \otimes \de(\xi) & \T{for} & n = 0 \\
\sum_{I \in \{1,\ldots,l\}^{|n|}} y_I^* \otimes \de( \inn{y_I^*, \xi}) & \T{for} & n<0.
}
\end{split}
\]
The Gra\ss mann connection $\Na_n : \C E_n \to E_{n} \hot_B \Om(\C B_\de)$ is a closed hermitian $\de$-connection for all $n \in \zz$ (see Subsection \ref{Ex: Hilher} ). We record the following relations between the different Gra\ss mann connections:
\begin{equation}\label{eq:relgrass}
\Na_n = \fork{ccc}{ \sum_{j = 1}^k (S_{x_j} \ot 1) \Na_{n - 1} S_{x_j}^* & \T{for} & n \geq 1 \\
\sum_{i = 1}^l (S_{y_i}^* \ot 1) \Na_{n+1} S_{y_i} & \T{for} & n \leq -1 .}
\end{equation}

To explain how the Gra\ss mann connections relate to the {\it left action} of $\C B_\de$ on $\C E_n$, $n \in \zz$, we introduce the $*$-homomorphisms
\[
\al_n : \C B_\de \to \fork{cc}{M_{k^{n}}(\C B_{\delta}) \quad & n\geq 0\\
M_{l^{|n|}}(\C B_{\delta}) \quad & n<0, } \quad\quad\quad\quad b\mapsto v_{n}\pi_{n}(b)v_{n}^{*},
\]
using the $\C B_\de$-linear maps from \eqref{eq:restrict}. It can then be verified that
\begin{equation}\label{eq:leftaction}
\Na_n(b \cd \xi) = b \cd \Na_n(\xi) + v_n^* \de( \al_n(b) ) v_n(\xi) \q b \in \C B_\de \, , \, \, \xi \in \C E_n \, .
\end{equation}

We define the dense submodule
\[
\C X := \bigoplus_{n \in \zz} \C E_n \su X,
\]
together with the closable hermitian $\de$-connection 
\[
\Na_\de : \C X \to X \hot_B \Om(\C B_\de), \q \Na_\de( ( \xi_n)_{n\in\mathbb{Z}} ) := (\Na_n(\xi_n))_{n\in\mathbb{Z}}.
\]

We consider the associated closed $*$-derivation $\delta_{\nabla}:\T{Dom}(\delta_{\nabla}) \to \mbox{End}^{*}_{\Omega(\C B_{\de})}\big(X\hot_{B}\Omega(\C B_{\de}) \big)$ constructed in Subsection \ref{Ex: Hilher}. The results in Subsection \ref{Ex: Hilher} then shows that we obtain an operator $*$-correspondence $\C X_\Na$ from $\T{Lip}(\de_\Na)$ to $\C B_\de$. We now address the question whether $\delta_{\nabla}$ is densely defined in $\C O_{E}$, that is, whether the intersection $\C O_E \cap \T{Lip}(\de_\Na)$ is dense in $\C O_E$. 
%
\medskip

For each $\xi \in \C E$, it is clear that the bounded adjointable operators $S_{\xi} \, , \, \, S_{\xi}^* : X\to X$ defined in \eqref{creabea} restrict to $\C B_\de$-linear maps $S_{\xi} \, , \, \, S_{\xi}^* : \C X \to \C X$. Applying \eqref{eq:relgrass} and \eqref{eq:leftaction} we may compute the commutator of these maps with the hermitian $\de$-connection $\nabla_{\de} : \C X \to X \hot_B \Om(\C B_\de)$. We state the result as a lemma:

\begin{lemma}\label{comm}
Let $\xi \in \C E$. 
We have the explicit formulae
\[
\Na_{n + 1} S_\xi - (S_\xi \ot 1) \Na_n = \fork{ccc}{
\sum_{j = 1}^k (S_{x_{j}} \ot 1) v_n^*  \cd \de\big( \alpha_n (\inn{x_{j},\xi} ) \big) \cd v_n
& \T{for} & n\geq 0 \\
\sum_{i=1}^l v_{n+1}^{*}\cdot \delta\big( \alpha_{n+1}(\langle \xi^{*},y_{i}^{*} \rangle)\big) \cdot v_{n+1}S_{y_{i}} & \T{for}& n \leq -1,
}
\]
and
\[
\Na_n S_\xi^* - (S_\xi^* \ot 1) \Na_{n+1} = \fork{ccc}{
\sum_{j = 1}^k v_n^* \cd \de\big( \al_n( \inn{\xi,x_j}) \big) \cd v_n S_{x_j}^* & \T{for} & n \geq 0 \\
\sum_{i = 1}^l (S_{y_i}^* \ot 1) v_{n+1}^* \cd \de\big( \al_{n+1}(\inn{y_i^*,\xi^*}) \big) \cd v_{n+1} & \T{for} & n \leq -1,
}
\]
for the commutator between the hermitian $\de$-connection $\nabla_{\delta}$ and the creation and annihilation operators associated to $\xi \in \C E$.
\end{lemma}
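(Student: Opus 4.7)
I would prove the four identities by reducing each commutator to a residual coming from the Leibniz rule \eqref{eq:leftaction}, with the help of the recursion \eqref{eq:relgrass} and the frame relations in Assumption \ref{a:pimsner}. The four claims split along two axes---creation versus annihilation, and nonnegative versus negative tensor degree---and the same strategy applies in each of them.

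For $\Na_{n+1} S_\xi - (S_\xi \ot 1) \Na_n$ with $n \geq 0$, I would start from $\Na_{n+1} = \sum_{j} (S_{x_j} \ot 1) \Na_n S_{x_j}^*$ and use the immediate identity $S_{x_j}^* S_\xi = \pi_n(\inn{x_j, \xi})$ on $\C E_n$. This yields $\Na_{n+1} S_\xi = \sum_j (S_{x_j} \ot 1) \Na_n \pi_n(\inn{x_j, \xi})$. Applying \eqref{eq:leftaction} to each $\Na_n \pi_n(\inn{x_j,\xi})$ splits the expression into two sums: the first reassembles to $(S_\xi \ot 1) \Na_n$ via the frame identity $\sum_j x_j \cd \inn{x_j, \xi} = \xi$ coming from $\sum_j x_j \ot x_j^* = 1_E$, while the second is precisely the claimed correction $\sum_j (S_{x_j} \ot 1) v_n^* \de(\al_n(\inn{x_j,\xi})) v_n$. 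For the companion formula $\Na_n S_\xi^* - (S_\xi^* \ot 1) \Na_{n+1}$ with $n \geq 0$, I would use the dual frame decomposition $S_\xi^* = \sum_j \pi_n(\inn{\xi, x_j}) S_{x_j}^*$ on $\C E_{n+1}$ (which follows from $\sum_j \inn{\xi,x_j}\inn{x_j,\eta_1} = \inn{\xi,\eta_1}$), apply $\Na_n$ on the left, and again match the canonical piece to $(S_\xi^* \ot 1) \Na_{n+1}$ through \eqref{eq:relgrass}.

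The negative-degree cases ($n \leq -1$) are structurally parallel, but run on the second line of \eqref{eq:relgrass} together with the resolution $\sum_i \inn{\xi^*, y_i^*} \inn{y_i^*, \eta^*} = \inn{\xi^*, \eta^*}$; this in turn follows from $\sum_i \inn{y_i, y_i} = 1_B$, the isomorphism $\pi : B \to \mathbb{K}(E)$, and the definition of the inner product on $E^*$. I expect the principal bookkeeping obstacle to be ensuring, in each of the four sub-cases, that the canonical summand produced by the Leibniz rule matches the compositional expression on the other side of the claimed equality, so that cancellation is clean and only the $\de(\al_m(\cd))$ term survives. This is why the two lines in each formula look asymmetric: the isometries $v_n$, and hence the conjugated left representations $\al_n$, are built from the frame $\{x_j\}$ for $n \geq 0$ and from the conjugate family $\{y_i^*\}$ for $n \leq -1$. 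Once the correct branch of the recursion is chosen and the right frame identity is applied, the rest reduces to direct Leibniz manipulations inside $\Om(\C B_\de)$ and involves no further analytic subtlety.
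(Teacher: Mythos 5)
Your proposal is correct and follows essentially the same route as the paper: expand one side of each commutator via the recursion \eqref{eq:relgrass}, recognize $S_{x_j}^*S_\xi$ (resp.\ $S_\xi S_{y_i}^*$) as left multiplication by $\inn{x_j,\xi}$ (resp.\ $\inn{\xi^*,y_i^*}$), push it through $\Na_n$ with the Leibniz rule \eqref{eq:leftaction}, and cancel the principal term using the frame identities of Assumption \ref{a:pimsner}. Your explicit dual-frame decomposition $S_\xi^*=\sum_j\pi_n(\inn{\xi,x_j})S_{x_j}^*$ merely spells out the ``similar computation'' the paper leaves to the reader for the annihilation operators.
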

\begin{proof} As mentioned above, we apply \eqref{eq:relgrass} and \eqref{eq:leftaction}. For $n \geq 0$ we find that
\[
\begin{split}
\Na_{n + 1} S_\xi - ( S_\xi \ot 1) \Na_n
& = \sum_{j = 1}^k (S_{x_j} \ot 1) \Na_n S_{x_j}^* S_\xi - (S_\xi \ot 1) \Na_n \\
& = \sum_{j = 1}^k (S_{x_j} S_{x_j}^* S_\xi \ot 1) \Na_n - (S_\xi \ot 1) \Na_n
+ (S_\xi \ot 1)\sum_{j = 1}^k v_n^* \de\big( \al_n( \inn{x_j,\xi}) \big) v_n \\
& = \sum_{j = 1}^k (S_{x_j} \ot 1)v_n^* \de\big( \al_n( \inn{x_j,\xi}) \big) v_n.
\end{split}
\]

For $n \leq - 1$ we find that
\[
\begin{split}
\Na_{n + 1} S_\xi - ( S_\xi \ot 1) \Na_n
& = \Na_{n + 1} S_\xi - \sum_{i = 1}^l (S_\xi S_{y_i}^* \ot 1) \Na_{n + 1} S_{y_i} \\
& = \Na_{n + 1} S_\xi - \sum_{i = 1}^l \Na_{n + 1} S_\xi S_{y_i}^* S_{y_i}
+ \sum_{i = 1}^l v_{n + 1}^* \de\big( \al_{n + 1}( \inn{\xi^*,y_i^*}) \big) v_{n + 1} S_{y_i} \\
& = \sum_{i = 1}^l v_{n + 1}^* \de\big( \al_{n + 1}( \inn{\xi^*,y_i^*}) \big) v_{n + 1} S_{y_i}.
\end{split}
\]

A similar computation can be used to verify the formulae for the annihilation operator $S_{\xi}^*$.

\end{proof}

\begin{prop}\label{prop: eqder} The following are equivalent:
\begin{enumerate}
\item $\sup_{n \in \zz}\| v_{n}^{*}\delta(\alpha_{n}(b))v_{n}\|_\infty <\infty$ for all $b\in \C B_{\delta}$;
\item $S_{\xi} \in \T{Dom}(\delta_{\nabla})$ for all $\xi\in\mathcal{E}$;
\item $\C B_{\delta}\su \T{Dom}(\delta_{\nabla})$.  
\end{enumerate}
\end{prop}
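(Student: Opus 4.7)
My plan is to establish the direct equivalence $(1)\Leftrightarrow(3)$, and then close the triangle via $(1)\Rightarrow(2)\Rightarrow(3)$.

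The equivalence $(1)\Leftrightarrow(3)$ is immediate from the left Leibniz rule \eqref{eq:leftaction}. An element $b\in\C B_{\delta}$ acts on the symmetrised Fock space as the diagonal endomorphism $\pi_{X}(b):=\bigoplus_{n}\pi_{n}(b)\in\T{End}^{*}_{B}(X)$, and \eqref{eq:leftaction} says that $\nabla_{n}\pi_{n}(b)-(\pi_{n}(b)\otimes 1)\nabla_{n}=v_{n}^{*}\delta(\alpha_{n}(b))v_{n}$ on $\C E_{n}$. Hence $\delta_{\nabla}^{0}(\pi_{X}(b))$ decomposes, with respect to the splitting $X\hot_{B}\Omega(\C B_{\delta})=\bigoplus_{n}E_{n}\hot_{B}\Omega(\C B_{\delta})$, as the direct sum of the operators $v_{n}^{*}\delta(\alpha_{n}(b))v_{n}$ acting in the first leg. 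This direct sum extends to a bounded operator if and only if its summands are uniformly bounded in $n$, i.e.\ if and only if condition (1) holds for $b$. Since $\C B_{\delta}$ is closed under involution, requiring (1) for every $b\in\C B_{\delta}$ is equivalent to requiring $b,b^{*}\in\sD$ for every $b$, which is precisely $\C B_{\delta}\subseteq\T{Dom}(\delta_{\nabla})$.

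For $(1)\Rightarrow(2)$ I would invoke Lemma~\ref{comm}. For each $\xi\in\C E$ the lemma writes the commutators $\nabla_{n+1}S_{\xi}-(S_{\xi}\otimes 1)\nabla_{n}$ and $\nabla_{n}S_{\xi}^{*}-(S_{\xi}^{*}\otimes 1)\nabla_{n+1}$ as finite linear combinations of compositions of the $n$-independent operators $S_{x_{j}}$, $S_{y_{i}}$ with the $\C B_{\delta}$-parametrised family $v_{n}^{*}\delta(\alpha_{n}(b))v_{n}$, where $b$ ranges over $\inn{x_{j},\xi}$, $\inn{\xi,x_{j}}$, $\inn{\xi^{*},y_{i}^{*}}$ and $\inn{y_{i}^{*},\xi^{*}}$, all of which lie in $\C B_{\delta}$. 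Hypothesis (1) makes each such $v_{n}^{*}\delta(\alpha_{n}(b))v_{n}$ uniformly bounded in $n$; assembling the finitely many $j,i$ contributions produces bounded extensions of $\delta_{\nabla}^{0}(S_{\xi})$ and $\delta_{\nabla}^{0}(S_{\xi}^{*})$, and hence $S_{\xi}\in\T{Dom}(\delta_{\nabla})$.

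For $(2)\Rightarrow(3)$ I would use that $\T{Dom}(\delta_{\nabla})$ is a $*$-subalgebra of $\T{End}^{*}_{B}(X)$ (Subsection~\ref{Ex: Hilher}) together with the Cuntz--Pimsner relation $\sum_{j=1}^{k}S_{x_{j}}S_{x_{j}}^{*}=1_{X}$, which is forced on the symmetrised Fock space by Assumption~\ref{a:pimsner}(2). Combined with the elementary identity $\pi_{X}(b)S_{x_{j}}=S_{\pi(b)x_{j}}=S_{bx_{j}}$, this yields
\[
\pi_{X}(b)=\sum_{j=1}^{k}S_{bx_{j}}S_{x_{j}}^{*}.
\]
Since $\C E$ is a $\C B_{\delta}$-bimodule, $bx_{j}\in\C E$, so (2) places each $S_{bx_{j}}$ in $\T{Dom}(\delta_{\nabla})$; the adjoints $S_{x_{j}}^{*}$ lie there as well, and the $*$-subalgebra property gives $\pi_{X}(b)\in\T{Dom}(\delta_{\nabla})$ for every $b\in\C B_{\delta}$. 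The main technical point I anticipate is a careful verification of $\sum_{j}S_{x_{j}}S_{x_{j}}^{*}=1_{X}$ on the negative-degree summands $E_{n}=(E^{*})^{\hot|n|}$: on the non-negative part it is transparent from the frame identity $\sum_{j}\ket{x_{j}}\bra{x_{j}}=1_{E}$, but on the negative part one has to unfold $S_{x_{j}}^{*}$ through the $E^{*}$-inner product and the Morita isomorphism $\pi:B\to\mathbb{K}(E)$, a routine but notation-heavy computation specific to the self-Morita equivalence setting.
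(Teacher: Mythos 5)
Your proposal is correct and follows essentially the same route as the paper: Lemma~\ref{comm} for $(1)\Rightarrow(2)$, the identity $b=\sum_{j}S_{bx_{j}}S_{x_{j}}^{*}$ together with the $*$-algebra property of $\T{Dom}(\delta_{\nabla})$ for $(2)\Rightarrow(3)$, and the diagonal decomposition $[\nabla_{\delta},b]=\bigoplus_{n}v_{n}^{*}\delta(\alpha_{n}(b))v_{n}$ from \eqref{eq:leftaction} linking $(1)$ and $(3)$. The only difference is organizational: the paper closes the cycle $1\Rightarrow2\Rightarrow3\Rightarrow1$, whereas you prove $(1)\Leftrightarrow(3)$ directly and so establish $(1)\Rightarrow(3)$ twice.
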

\begin{proof} $1.\Rightarrow 2.$: By Lemma \ref{comm}, the operators
$$\de_{\Na}^{0}(S_{\xi}) \, , \, \, \de_{\Na}^{0}(S_{\xi}^*) : \C X_\Na\ot_{\C B_{\de}}^{\textnormal{alg}}\Om(\C B_\de)\to X \hot_B \Om(\C B_\de), $$
extend boundedly to $X \hot_B \Om(\C B_\de)$ for all $\xi\in \mathcal{E}$ whenever $\sup_{n \in \zz}\|v_{n}^{*}\delta(\alpha_{n}(b))v_{n}\|_\infty <\infty$. 

$2.\Rightarrow 3.$:  For all $b \in \C B_\de \su \C O_E$ we have that
\[
b = \sum_{j=1}^k S_{bx_{j}}S_{x_{j}}^{*} \in \T{Dom}(\delta_{\nabla}).
\]

$3.\Rightarrow 1.$ Since $\C B_{\delta}\su \T{Dom}(\delta_{\nabla})$ it follows that
\[
[\Na_\de, b] = \bigoplus_{n\in\mathbb{Z}} v_{n}^{*}\delta\big(\alpha_{n}(b)\big)v_{n},
\]
is a bounded adjointable operator for all $b \in \C B_\de$. The norm of this diagonal operator is $\sup_{n \in \zz}\|v_{n}^{*}\delta(\alpha_{n}(b))v_{n}\|_\infty$.
\end{proof}

Motivated by the above proposition we make the following additional:

\begin{assu}
It is assumed that
\[
\sup_{n \in \zz}\| v_{n}^{*}\delta(\alpha_{n}(b))v_{n}\|_\infty <\infty,
\]
for all $b\in \C B_{\delta}$
\end{assu}
A similar assumption first appeared in the context of spectral triples for crossed products by the integers in \cite{BMR} and later in \cite[Proposition 4.8]{GMR} in the context of Cuntz-Pimsner algebras associated to vector bundles. It should be compared to Assumption \ref{a:grpact}.2 of the present paper.

We define the operator $*$-algebra $\mathcal{O}_{\mathcal{E}}$ to be the closure of the $*$-algebra generated by the operators $S_{\xi}$ with $\xi\in \mathcal{E}$ inside the operator $*$-algebra $\textnormal{Lip}(\delta_{\nabla})$, see Subsection \ref{Ex: Hilher}. The results of Subsection \ref{Ex: Hilher} now provides $\C X_{\nabla}$ with the structure of an operator $*$-correspondence from $\mathcal{O}_{\mathcal{E}}$ to $\C B_{\delta}$.

\section{The standard form of operator $*$-correspondences}\label{s:charcor}
We now prove a representation theorem for operator $*$-correspondences. Like operator $*$-algebras, operator $*$-correspondences always admit a so called \emph{standard form representation}. As a consequence, we deduce that, like Hilbert $C^{*}$-modules over $C^{*}$-algebras, operator $*$-modules admit a \emph{linking operator $*$-algebra}. Up to cb-isomorphism, any operator $*$-module arises as a corner in an operator $*$-algebra. The examples in Section 3 can easily be put in standard form.

\subsection{Implementing the inner product of a concrete operator $*$-correspondence}\label{s:conclink}

Let $\C X$ be an operator $*$-correspondence over a pair of operator $*$-algebras $(\C A,\C B)$. The following assumption will remain in effect throughout this subsection:

\begin{assu}\label{a:faithCES}
We assume that we may find a Hilbert space $H_{0}$ together with completely isometric algebra homomorphisms $\ga_{\C A} : \C A \to B(H_{0})$ and $\ga_{\C B} : \C B \to B(H_{0})$ together with a complete isometry $\ga_{\C X} : \C X \to B(H_{0})$ such that the relations
\[
\ga_{\C A}(a) \cd \ga_{\C X}(x) = \ga_{\C X}(a \cd x) \q \T{and} \q
\ga_{\C X}(x) \cd \ga_{\C B}(b) = \ga_{\C X}(x\cd b),
\]
hold for all $a \in \C A$, $x \in \C X$ and $b \in \C B$.
\end{assu}

From the data in Assumption \ref{a:faithCES} we consider the Hilbert space $H:=H_{0}\oplus H_{0}$ and the completely isometric algebra homomorphisms $\io_{\C A}: \C A \to B(H) $ and  $\io_{\C B} : \C B \to B(H)$ given by 

\begin{align*} \io_{\C A}(a) := \begin{pmatrix} \ga_{\C A}(a) & 0 \\ 0 & \ga_{\C A}(a^\da)^*\end{pmatrix}, \q \io_{\C B}(b) &:= \begin{pmatrix} \ga_{\C B}(b^{\dag})^*& 0 \\ 0 &  \ga_{\C B}(b)\end{pmatrix}. \end{align*}
 and the complete isometries $ \io_{\C X} : \C X \to B(H)$ and $\io_{\C X^\da} : \C X^\da \to B(H)$ given by 
 \begin{equation}\label{eq:iotax}
 \io_{\C X}(x) :=\begin{pmatrix}0 & \ga_{\C X}(x) \\ 0 & 0\end{pmatrix},\q  \io_{\C X^\da}(x^\da) :=\begin{pmatrix}0 & \ga_{\C X}(x)^{*} \\ 0 & 0\end{pmatrix}.
\end{equation}
The self-adjoint unitary $U := \ma{cc}{0 & 1 \\ 1 & 0}$ implements the relations
\begin{equation}\label{eq:symform}\iota_{\C A}(a^{\dag})=U\iota_{\C A}(a)^{*}U,\q \iota_{\C B}(b^{\dag})=U\iota_{\C B}(b)^{*}U,\q \iota_{\C X^{\dag}}(x^{\dag})=U\iota_{\C X}(x)^{*}U.\end{equation}
Moreover, we have the relations:

\begin{align}
\label{eq:Bmod}&  \io_{\C X}(x) \cd \io_{\C B}(b) = \io_{\C X}( x b  ),\q \io_{\C B}(b) \cd \io_{\C X^\da}(x^\da) = \io_{\C X^\da}( (x b^\da )^\da ),\\
\label{eq:Amod} &  \io_{\C A}(a)\cd \io_{\C X}(x )  = \io_{\C X}( a x ) \q \io_{\C X^\da}(x^\da ) \cd \io_{\C A}(a) = \io_{\C X^\da}( (a^\da x)^\da ) \q \T{and} \\
\label{eq: nilpotent}&\io_{\C X}(x)\io_{\C X}(y)=\io_{\C X^{\dag}}(y^{\dag})\io_{\C X}(x)=\io_{\C X}(x)\io_{\C X^{\dag}}(y^{\dag})=\io_{\C X^{\dag}}(y^{\dag})\io_{\C X^{\dag}}(x^{\dag})=0.
\end{align}

\medskip

We define the completely contractive bilinear map
\begin{equation}\label{eq:bilin}
 \phi : \C X^\da  \ti \C X \to B(H), \q \phi( x^\da, y) = \io_{\C B}( \inn{x, y}_{\C X} ) .
\end{equation}
By \cite[Theorem 3.2]{PaSm:MTO} and \cite[Corollary 3.2]{ChEfSi:CBM} there exist a Hilbert space $K$, and  $*$-homomorphisms
\[
\si_{\C X^\da }\, , \, \, \si_{\C X} : B(H) \to B(K) ,
\]
together with contractions $T : H \to K$ and $S : K \to H$ such that
\begin{equation}\label{eq:defining}
S \cd (\si_{\C X^\da} \ci \io_{\C X^\da})(x^\da) \cdot (\si_{\C X} \ci \io_{\C X})(y) \cd T = \phi(x^\da, y) ,
\end{equation}
for all $x,y \in \C X$. We define the completely contractive algebra homomorphism
\[ 
\rho_{\C A} : \C A \to B(K) \q \rho_{\C A}(a):=  \si_{\C X^\da}(\iota_{\C A}(a)), 
\]
as well as the completely contractive maps
\[
\rho_{\C X} := (\si_{\C X} \ci \io_{\C X}) : \C X \to B(K) \, , \, \, 
\rho_{\C X^\da} : (\si_{\C X^\da} \ci \io_{\C X^\da}) : \C X^\da \to B(K) \, .
\]

\begin{remark}\label{warning} It holds that 
\[
\rho_{\C X^\da}(x^\da)\rho_{\C A}(a)=\si_{\C X^{\da}}(\io_{\C X^\da}(x^\da)\io_{\C A}(a)) = \rho_{\C X^\da}((a^\da x)^\da),
\]
for all $x \in \C X$ and $a \in \C A$. Consequently, we have that
\[
S \cd \rho_{\C X^\da}(x^\da) \cdot \rho_{\C A}(a)\cdot \rho_{\C X}(y) \cd T = \phi((a^{\da} x)^{\da},y) = \phi(x^{\da}, ay),
\]
for all $x,y \in \C X$, $a \in \C A$. However, it need \emph{not} hold that $\rho_{\C A}(a)\rho_{\C X}(x)=\rho_{\C X}(ax)$.

\end{remark}

Define the subspace
\[
E :=\ov{\rho_{\C X^\da}(\C X^\da)^* \cd S^* H } \su K,
\]
and let $P\in B(K)$ be the orthogonal projection with $\T{Im}(P) = E$. Then define the subspace
\[ 
F :=\ov{ P\cd \rho_{\C X}(\C X) \cd T H} \su K, 
\]
and let $Q\in B(K)$ be the orthogonal projection with $\T{Im}(Q) = F$. In what follows, we will often remove the subscripts from the completely contractive maps $\rho_{\C A}$, $\rho_{\C X}$ and $\rho_{\C X^\da}$.

\begin{lemma}\label{l:relations} For $x\in \C X$ and $a\in \C A$ we have the relations
\begin{align}
\label{rel1}&PQ=QP=Q,\quad P\rho(a)P=P\rho(a), \quad Q\rho(a)P=Q\rho(a) \, , \\  
\label{rel3}& Q\rho(x)T=P\rho(x)T , \q S\rho(x^\da)P=S\rho(x^{\da}) \, , \\
\label{relnew} &P\rho(a)P\rho(x)T=P\rho(ax)T \, , \\ 
\label{rel2}&Q\rho(a)Q=P\rho(a)Q\, .  
\end{align}

\end{lemma}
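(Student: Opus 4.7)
I would verify the seven assertions in the order they appear, noting that the first, fourth, and fifth are essentially tautological, that the two $\rho(a)$-invariance statements reduce to a single computation using Remark~\ref{warning}, and that \eqref{relnew} is the heart of the argument, with \eqref{rel2} following quickly from it.

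For the tautological part: since $F=\overline{P\rho(\C X)TH}\subseteq\T{Im}(P)=E$, we obtain $PQ=QP=Q$ at once. Likewise every vector $\rho(x^\da)^*S^*h$ lies in $E$, so $P\rho(x^\da)^*S^*=\rho(x^\da)^*S^*$; taking adjoints yields $S\rho(x^\da)P=S\rho(x^\da)$. And $P\rho(x)Tg\in F$ by definition of $F$, so $QP\rho(x)T=P\rho(x)T$, which combined with $QP=Q$ gives $Q\rho(x)T=P\rho(x)T$. For the two $\rho(a)$-invariances, Remark~\ref{warning} yields
\[
\rho(a)^*\rho(x^\da)^*S^*h=(\rho(x^\da)\rho(a))^*S^*h=\rho((a^\da x)^\da)^*S^*h\in E,
\]
so $\rho(a)^*E\subseteq E$, which is $P\rho(a)P=P\rho(a)$. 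For $Q\rho(a)P=Q\rho(a)$ take $v=P\rho(x)Tg\in F$; then $\rho(a)^*v=\rho(a)^*Pv=P\rho(a)^*Pv\in E$ by what was just shown, whence $\rho(a)^*F\subseteq E$, which is the claim.

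The crucial identity is \eqref{relnew}. I would show $(\rho(a)P\rho(x)T-\rho(ax)T)H\perp E$, from which \eqref{relnew} follows on applying $P$. Pairing with $\rho(y^\da)^*S^*h'\in E$ gives
\[
\bigl\langle h',\,S\rho(y^\da)\rho(a)P\rho(x)Th-S\rho(y^\da)\rho(ax)Th\bigr\rangle.
\]
By multiplicativity of $\sigma_{\C X^\da}$ and Remark~\ref{warning}, $S\rho(y^\da)\rho(a)=S\rho((a^\da y)^\da)$, and invoking \eqref{rel3} to drop the $P$, the first summand becomes $S\rho((a^\da y)^\da)\rho(x)T=\iota_{\C B}(\langle a^\da y,x\rangle_{\C X})$ by the defining factorization \eqref{eq:defining}; the second summand equals $\iota_{\C B}(\langle y,ax\rangle_{\C X})$, and these coincide by axiom~(2) of Definition~\ref{d:opstarcor}. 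Finally, \eqref{relnew} shows that $P\rho(a)F\subseteq F$, since $P\rho(a)(P\rho(x)Tg)=P\rho(ax)Tg\in F$; hence $QP\rho(a)Q=P\rho(a)Q$, and combining with $QP=Q$ yields \eqref{rel2}.

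The main obstacle is \eqref{relnew}. The subtlety is that $\rho(a)\rho(x)$ involves the two distinct $\ast$-homomorphisms $\sigma_{\C X^\da}$ and $\sigma_{\C X}$ and so need \emph{not} equal $\rho(ax)$ (this is precisely the content of Remark~\ref{warning}). The identity is recovered only after pairing with $S$ and $T$ and invoking both the multiplicativity of $\sigma_{\C X^\da}$ and the Hermitianness axiom of the left $\C A$-action; in essence, this is how the multilinear Stinespring dilation \eqref{eq:defining} encodes the bimodule structure.
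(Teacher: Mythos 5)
Your proposal is correct and follows essentially the same route as the paper: the projection identities are read off from the definitions of $E$ and $F$, the invariance statements come from Remark \ref{warning}, and \eqref{relnew} is established by pairing against the dense spanning set $\rho(\C X^\da)^*S^*H$ of $E$ and using \eqref{eq:defining} together with the compatibility $\inn{a^\da y,x}_{\C X}=\inn{y,ax}_{\C X}$. The only cosmetic differences are that you drop the inner $P$ in \eqref{relnew} via \eqref{rel3} rather than via $P\rho(a)P=P\rho(a)$, and you prove $Q\rho(a)P=Q\rho(a)$ by a direct subspace argument instead of left-multiplying by $Q$; neither changes the substance.
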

\begin{proof}
The relation $PQ=Q=QP$ is immediate from the definition of $Q$. The linear subspace $\rho(\C A)^{*}\su B(K)$ is a subalgebra and by Remark \ref{warning} we have that $\rho(\C A)^{*}E\su E$. Therefore it holds that  $P\rho(a)^{*}P=\rho(a)^{*}P$ and consequently $P\rho(a)P=P\rho(a)$ for all $a\in \C A$. The fact that $Q \rho(a) P = Q \rho(a)$ now follows by multiplying the identity $P \rho(a) P = P \rho(a)$ from the left by $Q$. This proves Relation \eqref{rel1}. The relations in \eqref{rel3} follow immediately from the definition of the orthogonal projections $P$ and $Q \in B(K)$.

To prove the relation in \eqref{relnew}, we let $y \in \C X$ and $\xi,\eta \in H$. Using Relation \eqref{rel1}, the definition of $P$ and the defining relations \eqref{eq:defining}, we find that 
\begin{align*}
\binn{P\rho(a)P\rho(x)T\xi,\rho(y^\da)^{*}S^{*}\eta}
&=\binn{P\rho(a)\rho(x)T\xi,\rho(y^\da)^{*}S^{*}\eta}
=\binn{\rho(a)\rho(x)T\xi,\rho(y^\da)^{*}S^{*}\eta}\\
&=\binn{ \io_{\C B}( \inn { y, a x}_{\C X} ) \xi,\eta}=\binn{S\rho(y^{\da})\rho(ax)T\xi,\eta} 
= \binn{P\rho(ax)T\xi,\rho(y^\da)^{*}S^{*}\eta}\, .
\end{align*}
By the definition of the orthogonal projection $P \in B(K)$ we thus have that $$\inn{P \rho(a) P \rho(x) T \xi,\ze} = \binn{P\rho(ax)T\xi,\ze},$$ for all $\xi \in H$, $\ze \in K$ and this implies Relation \eqref{relnew}. 

To prove the relation in \eqref{rel2} we let $x \in \C X$ and $\xi \in H$ be given. Using Relation \eqref{relnew} we then have that
\begin{align*}Q \rho(a) Q P\rho(x)T\xi &=QP\rho(a)P\rho(x)T\xi=QP\rho(ax)T\xi \\&=P\rho(ax)T\xi =P\rho(a)P\rho(x)T\xi=P\rho(a)QP\rho(x)T\xi \, ,\end{align*}
Since the vectors of the form $P\rho(x)T\xi$ span a dense subspace of the image of $Q : K \to K$, it follows that $Q\rho(a)Q=P\rho(a)Q$ for all $a \in \C A$. \end{proof}
On the modules $\C X$ and $\C X^\da$ we define the completely bounded maps: 
\begin{align}\label{gam1}
\te_{\C X} & := \ma{cc}{ 0 & Q \cd \rho_{\C X} \cd T \\ 0 & \io_{\C X} } : \C X \to B(K \op H), \\
\te_{\C X^\da} &:= \ma{cc}{0 & 0 \\ S \cd \rho_{\C X^\da} \cd Q & \io_{\C X^{\dag}}} : \C X^\da \to B(K \op H),
\end{align}
and on the algebras $\C A$ and $\C B$ we define the complete isometries:
\begin{align}\label{gam2}
\te_{\C A} & := \ma{cc}{ Q\cdot\rho_{\C A}\cdot Q & 0 \\ 0 & \io_{\C A} } : \C A \to B(K \op H), \\
\te_{\C B} & := \ma{cc}{0 & 0 \\ 0 & \io_{\C B}} : \C B \to B(K \op H) .
\end{align}
\begin{lemma}\label{l:images}
We have the identities of subspaces:
\[ 
\ov{ Q \rho(\C X) T H} = Q K = \ov{ Q \rho(\C X^\da)^* S^* H} ,
\]
and in particular 
\[
\ov{ (Q \oplus 0_H) \te_{\C X}(\C X)(K\oplus H)} = (Q \op 0_H)(K \op H) 
= \ov{(Q \oplus 0_H) \te_{\C X^\da}(\C X^\da)^{*}(K\oplus H)}\, .
\] 

\end{lemma}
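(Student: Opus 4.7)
The plan is to prove the two displayed equalities in sequence and then transfer them to the $2\times 2$ matrix picture, using only the relations established in Lemma \ref{l:relations} together with the definitions of $P$, $Q$, $E$, and $F$.

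First I would establish $\overline{Q\rho(\mathcal X)TH}=QK$. By definition $QK=F=\overline{P\rho(\mathcal X)TH}$, so it suffices to interchange $P$ and $Q$ here. This is exactly relation \eqref{rel3} of Lemma \ref{l:relations}: for every $x\in\mathcal X$ we have $Q\rho(x)T=P\rho(x)T$, and taking closures gives the desired equality.

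Next I would establish $\overline{Q\rho(\mathcal X^\dagger)^*S^*H}=QK$. For the inclusion $\supseteq$, note that the relation $PQ=QP=Q$ from \eqref{rel1} implies $QK\subseteq PK$, and by definition $PK=E=\overline{\rho(\mathcal X^\dagger)^*S^*H}$. Given $v\in QK$, choose a sequence of finite sums $w_n=\sum_k\rho(x_{n,k}^\dagger)^*S^*\xi_{n,k}\to v$. Applying $Q$ yields $Qw_n\to Qv=v$, and each $Qw_n$ lies in $Q\rho(\mathcal X^\dagger)^*S^*H$ (or its finite sums). The inclusion $\subseteq$ is immediate from $Q\rho(x^\dagger)^*S^*H\subseteq QK$.

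Finally, to deduce the two displayed equalities involving $\theta_{\mathcal X}$ and $\theta_{\mathcal X^\dagger}$, I would compute the action of $(Q\oplus 0_H)\theta_{\mathcal X}(x)$ and $(Q\oplus 0_H)\theta_{\mathcal X^\dagger}(x^\dagger)^*$ on a vector $(k,h)\in K\oplus H$ directly from the block forms in \eqref{gam1}. Using $Q^2=Q$, one gets
\[
(Q\oplus 0_H)\theta_{\mathcal X}(x)(k,h)=(Q\rho(x)Th,0),
\qquad
(Q\oplus 0_H)\theta_{\mathcal X^\dagger}(x^\dagger)^*(k,h)=(Q\rho(x^\dagger)^*S^*h,0),
\]
where we used that the $(1,1)$-entry of each block is zero. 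Taking the closed linear span as $x$ and $h$ vary, the first set becomes $\overline{Q\rho(\mathcal X)TH}\oplus 0=QK\oplus 0=(Q\oplus 0_H)(K\oplus H)$ by the first paragraph, and the second set equals the same space by the second paragraph. This completes both identities.

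No step here is really an obstacle since everything reduces to the algebraic relations \eqref{rel1} and \eqref{rel3} already established; the only minor point to be careful about is that in the second equality one must pass through $PK$ and then project by $Q$, rather than trying to reverse the inclusion directly on generators.
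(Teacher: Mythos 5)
Your proposal is correct and follows essentially the same route as the paper: the first identity comes from $Q\rho(x)T=P\rho(x)T$ (relation \eqref{rel3}) together with the definition $QK=F=\ov{P\rho(\C X)TH}$, the second from $QK\subseteq PK=E=\ov{\rho(\C X^\da)^*S^*H}$ and continuity of $Q$, and the block-matrix computation for the ``in particular'' statements is exactly the step the paper leaves implicit. No gaps.
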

\begin{proof} We observe that
\[
\ov{Q\rho(\C X)TH}=\ov{QP\rho(\C X)TH}=\ov{P\rho(\C X)TH}=QK,
\]
and similarly that
\[
\ov{Q\rho(\C X^{\da})^{*}S^{*}H}=QPK=QK \, ,
\]
which completes the proof of the first two identities. The second pair of identities is an immediate consequence of the first pair of identities.
\end{proof}

\begin{prop}\label{p:stine}
The maps $\te_{\C A} : \C A\to B(K \op H )$ and $\te_{\C B} : \C B \to B(K \op H )$ are completely isometric algebra homomorphisms and the linear maps $\te_{\C X} : \C X \to B(K \op H )$ and $\te_{\C X^\da} : \C X^\da \to B(K \op H )$ satisfy the estimates:
\[
\| x \| \leq \| \te_{\C X}( x) \| \leq \sqrt{2} \cd \| x \| \q \T{and} \q
\| x^\da \| \leq \| \te_{\C X^\da}(x^\da) \| \leq \sqrt{2} \cd \| x^\da \|,
\]
for all $x \in M_m(\C X)$, $m \in \nn$. Moreover, we have the relations:
\begin{align}
\label{impl1}& \te_{\C X^\da}(x^\da) \cd \te_{\C X}(y) = \te_{\C B}( \inn{x,y}_{\C X} ) ,\\
\label{impl2}& \te_{\C A}(a) \cd \te_{\C X}(y) = \te_{\C X}(a \cd y  ), \q   \te_{\C X^\da}(x^\da)\cd \te_{\C A}(a) = \te_{\C X^\da}( (a^\da x )^\da ) ,  \\
\label{impl3}& \te_{\C X}(y) \cd \te_{\C B}(b) = \te_{\C X}(y \cd b), \q  \te_{\C B}(b) \cd \te_{\C X^\da}(x^\da) = \te_{\C X^\da}( ( x\cd b^\da )^\da ),
\end{align}

for all $x,y \in \C X$, $a \in \C A$ and $b \in \C B$. 

\end{prop}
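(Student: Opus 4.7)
My plan is to verify each assertion by exploiting the block structure of the maps $\te_{\C A}, \te_{\C B}, \te_{\C X}, \te_{\C X^\da}$ on $K \oplus H$, together with the relations in Lemma \ref{l:relations} and the defining identity \eqref{eq:defining}. Throughout, I will freely use that $\rho_{\C A}$, $\rho_{\C X}, \rho_{\C X^\da}$ are completely contractive (being compositions of $*$-homomorphisms with complete isometries), so that the $(1,1)$ or off-diagonal blocks are never larger than the corresponding $\io$-components.

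First I would handle $\te_{\C B}$ and $\te_{\C A}$. Since $\te_{\C B}$ is simply $\io_{\C B}$ placed in the $(2,2)$-corner with zeros elsewhere, the fact that it is a completely isometric algebra homomorphism is immediate from the corresponding property of $\io_{\C B}$. For $\te_{\C A}$, the block-diagonal form makes complete isometry follow from the complete isometry of $\io_{\C A}$ combined with the contractivity of $Q\rho_{\C A}(\cdot)Q$; for multiplicativity the key computation is
\[
Q\rho(a_1)Q \cdot Q\rho(a_2)Q \;=\; Q\rho(a_1)P\rho(a_2)Q \;=\; QP\rho(a_1)P\rho(a_2)Q \;=\; Q\rho(a_1a_2)Q,
\]
where I used \eqref{rel2} in the first step, $Q=QP$ in the second, and $P\rho(a_1)P = P\rho(a_1)$ from \eqref{rel1} together with the $*$-homomorphism property of $\rho_{\C A}$ in the third. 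Combined with the multiplicativity of $\io_{\C A}$ this gives the algebra homomorphism property of $\te_{\C A}$.

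Next I would handle the norm estimates for $\te_{\C X}$ and $\te_{\C X^\da}$. Since $\te_{\C X}(x)$ has $Q\rho(x)T$ in the $(1,2)$-slot and $\io_{\C X}(x)$ in the $(2,2)$-slot with the rest zero, it is a ``column'' in the second coordinate and its norm squared equals $\|(Q\rho(x)T)^*(Q\rho(x)T) + \io_{\C X}(x)^*\io_{\C X}(x)\|$. The lower bound $\|x\| \le \|\te_{\C X}(x)\|$ comes from compressing to $H\oplus H$ to recover $\io_{\C X}(x)$; the upper bound $\|\te_{\C X}(x)\| \le \sqrt{2}\,\|x\|$ follows because both summands have norm at most $\|x\|^2$. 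The same argument (on matrix amplifications) gives the estimate at each matrix level, and the corresponding statement for $\te_{\C X^\da}$ is symmetric.

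The bulk of the work will be verifying \eqref{impl1}, \eqref{impl2}, \eqref{impl3} by direct block multiplication. For \eqref{impl1}, the only nontrivial entry is $(2,2)$, which equals $S\rho(x^\da)Q\rho(y)T + \io_{\C X^\da}(x^\da)\io_{\C X}(y)$; the second term vanishes by \eqref{eq: nilpotent}, and using $S\rho(x^\da)P=S\rho(x^\da)$ and $Q\rho(y)T=P\rho(y)T$ from \eqref{rel3} together with \eqref{eq:defining} reduces the first term to $\io_{\C B}(\inn{x,y}_{\C X})$. For \eqref{impl2} on the left, the nontrivial $(1,2)$-entry is $Q\rho(a)Q\rho(y)T$, which I rewrite as $QP\rho(a)P\rho(y)T = QP\rho(ay)T = Q\rho(ay)T$ via \eqref{rel3}, \eqref{rel1}, and \eqref{relnew}; the $(2,2)$-entry follows from \eqref{eq:Amod}. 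The right-sided identity in \eqref{impl2} is analogous, using $S\rho(x^\da)P = S\rho(x^\da)$ together with $\rho(x^\da)\rho(a) = \rho((a^\da x)^\da)$ from Remark \ref{warning}.

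I expect the most delicate step to be \eqref{impl3}, where no direct algebraic identity handles the $(1,2)$-entry $Q\rho(y)T\io_{\C B}(b)$ or the $(2,1)$-entry $\io_{\C B}(b)S\rho(x^\da)Q$. My plan here is to use the density arguments implicit in the definitions of $P$ and $Q$. For the first entry, the identity
\[
S\rho(x^\da)\rho(y)T\io_{\C B}(b) \;=\; \io_{\C B}(\inn{x,y}_{\C X})\io_{\C B}(b) \;=\; \io_{\C B}(\inn{x,y\cdot b}_{\C X}) \;=\; S\rho(x^\da)\rho(y\cdot b)T,
\]
valid for all $x\in\C X$, shows that the range of $\rho(y)T\io_{\C B}(b) - \rho(y\cdot b)T$ lies in the kernel of $S\rho(x^\da)$ for every $x$, hence (by the very definition of $P$ as the projection onto $\ov{\rho(\C X^\da)^*S^*H}$) in the kernel of $P$ and thus of $Q=QP$, which gives $Q\rho(y)T\io_{\C B}(b) = Q\rho(y\cdot b)T$. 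A symmetric argument, applied on the right to elements of the dense subspace $P\rho(\C X)TH$ of $QK$, handles $\io_{\C B}(b)S\rho(x^\da)Q = S\rho((x\cdot b^\da)^\da)Q$. Assembling these pieces into the block product concludes the proof of \eqref{impl3} and thus of the proposition.
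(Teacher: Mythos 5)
Your proposal is correct and follows essentially the same route as the paper's proof: block multiplication combined with the relations of Lemma \ref{l:relations} and the defining identity \eqref{eq:defining}, with a density/orthogonality argument for \eqref{impl3}. For that last step you unwind the definitions of $P$ and $Q$ directly (kernel of $P$ equals $\bigcap_{x}\ker S\rho(x^\da)$, range of $Q$ equals $\ov{P\rho(\C X)TH}$) where the paper packages the same fact as Lemma \ref{l:images} and tests against $\te_{\C X^\da}(\C X^\da)^*$ using \eqref{impl1}; the two arguments are interchangeable.
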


\begin{proof}
It is clear that the maps $\te_{\C A}$ and $\te_{\C B}$ are completely isometric and moreover that $\te_{\C B}$ is an algebra homomorphism. The norm-estimates for $\te_{\C X}$ and $\theta_{\C X^{\dag}}$ follow since $\io_{\C X}$ and $\iota_{\C X^{\dag}}$ are complete isometries and  $Q \cd \rho_{\C X} \cd T$ and $S\cd\rho_{\C X^{\dag}}\cd Q$ are completely contractive. 
\medskip

The fact that $\te_{\C A}$ is an algebra homomorphism follows from Lemma \ref{l:relations}:
\[
\te_{\C A}(a)\te_{\C A}(b)=Q\rho(a)Q\rho(b) Q=Q\rho(a)P\rho(b)Q= Q\rho(a)\rho(b)Q= Q \rho(ab) Q=\te_{\C A}(ab) \,.
\]

We now verify the identity in \eqref{impl1}. We apply the relations in Lemma \ref{l:relations} and Equations \eqref{eq: nilpotent}  and \eqref{eq:defining}. Indeed, we compute that
\[
\te_{\C X^\da}(x^\da) \cd \te_{\C X}(y) 
= 0 \op S \rho(x^\da) Q \rho( y) T = 0 \op S \rho(x^\da) \rho( y) T
= \te_{\C B}( \inn{x, y}_{\C X}) \, .
\]

Next we consider the left module structure applying Lemma \ref{l:relations} and Equation \eqref{eq:Amod}:
\[
\te_{\C A}(a) \cd \te_{\C X}(y)=
\ma{cc}{ 0 & Q\rho(a)Q\rho(y)T \\ 
0 & \io_{\C A}(a)\io_{\C X}(y) } = \ma{cc}{0 & Q\rho(a)P\rho(y)T \\ 0 & \io_{\C X}(a y) } = \te_{\C X}(ay) \, .
\]
Similarly, we obtain the second part of \eqref{impl2} from Lemma \ref{l:relations}, Remark \ref{warning} and Equation \eqref{eq:Amod}. Indeed,
\[
\begin{split}
\te_{\C X^\da}(x^\da)\cd \te_{\C A}(a)
& = \ma{cc}{0 & 0 \\
S\rho(x^\da)Q\rho(a)Q & \io_{\C X^{\dag}}(x^\da)\io_{\C A}(a)} 
= \ma{cc}{0 & 0 \\ 
S\rho(x^\da)P\rho(a)Q & \io_{\C X^{\dag}}((a^{\dag}x)^{\dag}) }  \\
& = \ma{cc}{ 0 & 0 \\
S\rho(x^\da)\rho(a)Q & \io_{\C X^{\dag}}((a^{\dag}x)^{\dag}) } 
= \te_{\C X^\da}( (a^\da x)^\da) \, .
\end{split}
\]

We now consider the $\C B$-module structures, starting with the first identity in \eqref{impl3}. By definition of $\te_{\C B}$ and $\theta_{\C X}$ and Equation \eqref{eq:Bmod} it is immediate that
\[
\theta_{\C X}(y)\cd\theta_{\C B}(b)-\theta_{\C X}(y b) 
= (Q\oplus 0_H)(\theta_{\C X}(y)\cd \theta_{\C B}(b)-\theta_{\C X}(y b)) \, .
\]
By Lemma \ref{l:images} it thus suffices to show that for all $\xi\in K\oplus H$ and $\eta \in \ov{(Q\oplus 0_H) \theta_{\C X^{\dag}}(\C X^{\dag})^{*}(K\oplus H)}$ it holds that
\begin{equation*}
\binn{ (Q \oplus 0_H)(\theta_{\C X}(y)\cd\theta_{\C B}(b)-\theta_{\C X}(yb))\xi, \eta } = 0 \, . 
\end{equation*}
This in turn follows, if for all $x\in \C X$ we have 
\begin{equation}\label{eq:orth}
\theta_{\C X^{\dag}}(x^{\dag})(\theta_{\C X}(y)\theta_{\C B}(b)-\theta_{\C X}(yb))=\theta_{\C X^{\dag}}(x^{\dag})\cd (\theta_{\C X}(y)\theta_{\C B}(b)-\theta_{\C X}(yb))=0 \, .
\end{equation}
To this end we compute, using \eqref{impl1}, that
\[
\te_{\C X^\da}(x^\da) \cd \te_{\C X}(y) \cd \te_{\C B}(b) 
= \te_{\C B}( \inn{x,y}_{\C X} \cd b) 
= \te_{\C X^\da}(x^{\da})\cd \te_{\C X}(y\cd b) \, ,
\]
which yields \eqref{eq:orth}. The second identity in \eqref{impl3} is proved in a similar manner by observing that
\[
\te_{\C B}(b)\cd \te_{\C X^{\dag}}(x^{\dag})-\te_{\C X^{\dag}}((x\cd b^{\dag})^{\dag})=( \te_{\C B}(b)\cd \te_{\C X^{\dag}}(x^{\dag})-\te_{\C X^{\dag}}((x\cd b^{\dag})^{\dag}))(Q\oplus 0_H) \, .
\]
Indeed, employing Lemma \ref{l:images}, it suffices that the above expression vanishes on all elements in the subspace $\ov{ (Q\oplus 0_H)\te_{\C X}(\C X)(K\oplus H) } \su K \op H$. This is established since
\[
(\te_{\C B}(b)\cd \te_{\C X^{\dag}}(x^{\dag})-\te_{\C X^{\dag}}((x\cd b^{\dag})^{\dag})\te_{\C X}(y)=0, \q  \forall y \in \C X \, ,
\]
which completes the proof.
\end{proof}

We now arrive at the main result of this section.
\begin{thm}\label{t:isochar}
Let $\C X$ be an operator $*$-correspondence over $(\C A,\C B)$ satisfying Assumption \ref{a:faithCES}. Then there exist a Hilbert space $H_{\pi}$ together with completely isometric algebra homomorphisms $\pi_{\C B} : \C B \to B(H_{\pi})$ and $\pi_{\C A} : \C A \to B(H_{\pi})$, a completely bounded linear map $\pi_{\C X} : \C X \to B(H_{\pi})$, and a selfadjoint unitary $U \in B(H_{\pi})$ such that
\begin{enumerate}
\item $\pi_{\C X}$ satisfies the norm-estimate:
\[
\| x \|_{\C X} \leq \| \pi_{\C X}(x) \| \leq \sqrt{2} \cd \| x \|_{\C X},
\]
for all $x \in M_n(\C X)$, $n \in \nn$. In particular $\C X$ is cb-isomorphic to the image $\pi_{\C X}(\C X) \su  B(H_\pi)$.
\item The module relations hold:
\[
\pi_{\C X}(x \cd b) = \pi_{\C X}(x) \cd \pi_{\C B}(b), \q
\pi_{\C A}(a) \cd \pi_{\C X}(x) = \pi_{\C X}(a \cd x) ,
\]
for all $x \in \C X$ and all $a \in \C A$, $b \in \C B$.
\item $U$ implements the involutions: 
\[
U \pi_{\C A}(a)^* U = \pi_{\C A}(a^\da), \q U \pi_{\C B}(b)^* U = \pi_{\C B}(b^\da),
\]
for all $a \in \C A$, $b \in \C B$.
\item $U$ implements the inner product:
\[
U \pi_{\C X}(x)^* U \pi_{\C X}(y) = \pi_{\C B}( \inn{x,y} ),
\]
for all $x,y \in \C X$.
\end{enumerate}
\end{thm}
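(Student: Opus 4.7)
The approach is to build on Proposition \ref{p:stine}. That proposition already produces maps $\te_{\C A}, \te_{\C B}, \te_{\C X}, \te_{\C X^\da}$ on $B(K \op H)$ satisfying the module relations \eqref{impl2}--\eqref{impl3} and the inner-product identity \eqref{impl1}, together with the sandwich $\|x\| \leq \|\te_{\C X}(x)\| \leq \sqrt{2}\|x\|$. The one feature missing is a single selfadjoint unitary that simultaneously conjugates the involutions on $\C A$ and $\C B$ and relates $\te_{\C X}$ to $\te_{\C X^\da}$. The natural way to obtain such a symmetry is to double the representation in the spirit of Propositions \ref{p:invo} and \ref{p:invo1}, letting the symmetry be the flip that swaps the two copies.

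Accordingly I would set $H_\pi := (K \op H) \op (K \op H)$ and define
\[
\pi_{\C A}(a) := \begin{pmatrix} \te_{\C A}(a) & 0 \\ 0 & \te_{\C A}(a^\da)^* \end{pmatrix}, \quad
\pi_{\C B}(b) := \begin{pmatrix} \te_{\C B}(b) & 0 \\ 0 & \te_{\C B}(b^\da)^* \end{pmatrix},
\]
\[
\pi_{\C X}(x) := \begin{pmatrix} \te_{\C X}(x) & 0 \\ 0 & \te_{\C X^\da}(x^\da)^* \end{pmatrix}, \quad U := \begin{pmatrix} 0 & 1 \\ 1 & 0 \end{pmatrix}.
\]
The lower diagonal blocks $c \mapsto \te(c^\da)^*$ are compositions of two anti-multiplicative involutions, hence completely isometric algebra homomorphisms; combined with the top blocks this gives that $\pi_{\C A}$ and $\pi_{\C B}$ are completely isometric algebra homomorphisms, and $\pi_{\C X}$ inherits the sandwich estimate (1) since $\da$ is itself a complete isometry. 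Property (3) is then immediate because conjugation by $U$ swaps the two diagonal blocks and takes adjoints. The module relations (2) and the inner-product relation (4) reduce block by block to \eqref{impl1}--\eqref{impl3}; for instance the top-left block of $U \pi_{\C X}(x)^* U \pi_{\C X}(y)$ is $\te_{\C X^\da}(x^\da) \te_{\C X}(y) = \te_{\C B}(\inn{x,y})$ by \eqref{impl1}, while the bottom-right block is $(\te_{\C X^\da}(y^\da) \te_{\C X}(x))^* = \te_{\C B}(\inn{y,x})^* = \te_{\C B}(\inn{x,y}^\da)^*$, matching the two diagonal blocks of $\pi_{\C B}(\inn{x,y})$.

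The essential analytic content has already been absorbed into Proposition \ref{p:stine}, which itself rests on the Wittstock--Paulsen--Smith representation of the completely bounded bilinear map $\phi(x^\da,y) = \io_{\C B}(\inn{x,y})$. What remains in the step above is therefore purely organizational and I anticipate no serious obstacle; the only technical point that must be tracked carefully is the interplay between $\da$ and $*$ in the lower diagonal blocks, which dictates precisely why the lower block of $\pi_{\C X}$ must be $\te_{\C X^\da}(x^\da)^*$ rather than $\te_{\C X^\da}(x^\da)$, and analogously for the ``$*$-twist'' in the lower blocks of $\pi_{\C A}$ and $\pi_{\C B}$.
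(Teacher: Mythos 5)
Your construction is exactly the paper's: the authors also set $H_\pi := (K \op H)^{\op 2}$, take $U$ to be the flip, and define $\pi_{\C A}$, $\pi_{\C B}$, $\pi_{\C X}$ by the same doubled block matrices with the $*$-twisted lower diagonal entries, reducing all verifications to Proposition \ref{p:stine}. The proposal is correct and follows the same route.
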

\begin{proof}
We choose the Hilbert spaces $H,K$ and the maps $\te_{\C A}$, $\te_{\C B}$, $\te_{\C X}$, $\te_{\C X^\da}$ as in Proposition \ref{p:stine}. We then define the Hilbert space $H_{\pi}:=(K\oplus H)^{\oplus 2}$ and the  selfadjoint unitary operator
\[
U:=\begin{pmatrix}0 & 1_{K\oplus H}  \\  1_{K\oplus H}& 0 \end{pmatrix} \in B( H_\pi ) \, .
\]
Subsequently, we define the completely bounded linear map
\[
\pi_{\C X} : \C X \to B(H_{\pi}), \q \pi_{\C X}(x) := \ma{cc}{\te_{\C X}(x) & 0 \\ 0 & \te_{ \C X^\da}(x^\da)^{*} },
\]
and the completely isometric algebra homomorphisms
\[
\begin{split}
& \pi_{\C B} : \C B \to B(H_{\pi}), \q \pi_{\C B}(b) := \ma{cc}{\te_{ \C B}(b) & 0 \\ 0 & \te_{\C B}(b^{\da})^{*}} \q \T{and} \\
& \pi_{\C A} : \C A \to B(H_{\pi}), \q \pi_{\C A}(a) :=  \ma{cc}{\te_{ \C A}(a) & 0 \\ 0 & \te_{ \C A}(a^\da)^*} \, .
\end{split}
\]
The fact that these maps satisfy the identities $(2)$, $(3)$ and $(4)$ and the required norm estimates can be verified by straightforward methods using Proposition \ref{p:stine}.
\end{proof}

\subsection{The standard form of an operator $*$-correspondence} Inspired by Theorem \ref{t:isochar} we make the following general definition. We emphasize that in the present section Assumption \ref{a:faithCES} is no longer in effect.

\begin{dfn}\label{d:staform}
Let $\C X$ be an operator $*$-correspondence from $\C A$ to $\C B$. We say that a quintuple $(H_\pi, U, \pi_{\C A}, \pi_{\C B}, \pi_{\C X})$ is a \emph{standard form representation of $\C X$}, when $H_\pi$ is a Hilbert space, $U \in B(H_\pi)$ is a selfadjoint unitary operator, $\pi_{\C A} : \C A \to B(H_\pi)$ and $\pi_{\C B} : \C B \to B(H_\pi)$ are completely bounded algebra homomorphisms and $\pi_{\C X} : \C X \to B(H_\pi)$ is a completely bounded linear map such that
\begin{enumerate}
\item $\C A$, $\C B$ and $\C X$ are cb-isomorphic to their respective images $\pi_{\C A}(\C A)$, $\pi_{\C B}(\C B)$ and $\pi_{\C X}(\C X) \su B(H_\pi)$ via the maps $\pi_{\C A}, \pi_{\C B}$ and $\pi_{\C X}$ (in particular these images are closed in operator norm).
\item We have the module relations:
\[
\pi_{\C X}(x \cd b) = \pi_{\C X}(x) \cd \pi_{\C B}(b), \q \pi_{\C A}(a \cd x) = \pi_{\C A}(a) \cd \pi_{\C X}(x),
\]
for all $a \in \C A$, $x \in \C X$, $b \in \C B$.
\item The selfadjoint unitary $U \in B(H_\pi)$ implements the involutions and the inner product:
\[
U \pi_{\C A}(a)^* U = \pi_{\C A}(a^\da) \, \, , \q U \pi_{\C B}(b)^* U = \pi_{\C B}(b^\da) 
\, \, , \q U \pi_{\C X}(x)^* U \pi_{\C X}(y) = \pi_{\C B}( \inn{x,y}),
\]
for all $a \in \C A$, $b \in \C B$, $x,y \in \C X$ (where $*$ refers to the Hilbert space adjoint operation).
\end{enumerate}
\end{dfn} 
Having established the existence of standard form representations for operator $*$-correspondences satisfying Assumption \ref{a:faithCES}, we are now ready to prove our main result: The existence of a standard form representation for an arbitrary operator $*$-correspondence. 
\newline

Let $\C A$ and $\C B$ be operator $*$-algebras and let $\C X$ be an operator $*$-correspondence from $\C A$ to $\C B$. Since $\C X$ is, in particular, an operator $\C A$-$\C B$-bimodule, by \cite[Theorem 5.2.17]{BlMe:OMO} there exist a Hilbert space $H_{0}$, completely bounded algebra homomorphisms $\al_{\C A} : \C A \to B(H_{0})$ and $\al_{\C B} : \C B \to B(H_{0})$ and a completely bounded linear map $\al_{\C X} : \C X \to B(H_{0})$ such that
\begin{enumerate}
\item $\C A$, $\C B$ and $\C X$ are cb-isomorphic to their respective images $\al_{\C A}(\C A)$, $\al_{\C B}(\C B)$ and $\al_{\C X}(\C X) \su B(H_0)$ via the above maps (in particular, these images are closed in operator norm).
\item The module relations hold:
\[
\al_{\C A}(a) \cd \al_{\C X}(x) = \al_{\C X}(a \cd x) \q \T{and} \q
\al_{\C X}(x) \cd \al_{\C B}(b) = \al_{\C X}(x \cd b),
\]
for all $x \in \C X$, $a \in \C A$, $b \in \C B$.
\end{enumerate}

Using the maps $\al_{\C A}$, $\al_{\C B}$ and $\al_{\C X}$, we introduce new matrix norms on $\C A$, $\C B$ and $\C X$ and denote the resulting operator spaces by $\C A_{\al}$, $\C B_{\al}$ and $\C X_{\al}$, respectively. Let $H:=H_{0}\oplus H_{0}$. The matrix norms on $\C A_{\al} = \C A$ and $\C B_{\al} = \C B$ are defined via the injective algebra homomorphisms
\begin{equation}\label{eq:alpisometI}
\begin{split}
& \ga_{\C A_\al} : \C A \to B(H), \q \ga_{\C A_\al} : a\mapsto \begin{pmatrix}\al_{\C A}(a) & 0 \\ 0 & \al_{\C A}(a^{\da})^{*} \end{pmatrix}, \\
& \ga_{\C B_\al} : \C B \to B(H), \q \ga_{\C B_\al} : b\mapsto \begin{pmatrix}\al_{\C B}(b^\da)^{*} & 0 \\ 0 & \al_{\C B}(b) \end{pmatrix} 
\, ,
\end{split}
\end{equation}
making $\C A_{\al}$ and $\C B_{\al}$ into operator $*$-algebras  (where the $*$-algebra structures come from the $*$-algebra structures on $\C A$ and $\C B$, respectively). The matrix norms on $\C X_{\al} = \C X$ are defined via the injective linear map 
\begin{equation}\label{eq:alpisometII}
\ga_{\C X_\al} : \C X \to B(H), \q \ga_{\C X_\al}: x\mapsto \begin{pmatrix} 0 & \al_{\C X}(x) \\ 0 & 0\end{pmatrix}  \, . 
\end{equation}
 Clearly $\C X_\al$ becomes an operator $\C A_\al$-$\C B_\al$-bimodule when equipped with the bimodule structure coming from $\C X$. Moreover, we have the following:

\begin{lemma}
Let $C := \| \al_{\C B} \|_{\T{cb}} \cd \| \al_{\C X}^{-1} \|_{\T{cb}}^2$. The inner product $\inn{\cd,\cd}_{\C X_\al} : \C X_\al \ti \C X_\al \to \C B_\al$ defined by $\inn{x,y}_{\C X_\al} := \frac{1}{C} \cd \inn{x,y}_{\C X}$, $x,y \in \C X_\al$, provides $\C X_\al$ with the structure of an operator $*$-correspondence from $\C A_\al$ to $\C B_\al$.
\end{lemma}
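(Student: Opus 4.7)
The algebraic properties of $\inn{\cd,\cd}_{\C X_\al}$ required by Definition \ref{d:herope} (1)-(3) and the compatibility relation $\inn{a\cd x,y}_{\C X_\al}=\inn{x,a^\da\cd y}_{\C X_\al}$ with the left $\C A_\al$-action are inherited from $\C X$ verbatim, since multiplication by the positive scalar $1/C$ does not affect any of these identities, and the algebraic bimodule and involution structure of $\C X_\al$ is by definition the same as that of $\C X$. So the only things requiring verification are (i) that $\C X_\al$ is indeed an operator $\C A_\al$-$\C B_\al$-bimodule in the sense of Definition \ref{d:opebim}, and (ii) the Cauchy-Schwarz estimate (4) of Definition \ref{d:herope}.

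For (i), I would exploit the concrete form of the embeddings in \eqref{eq:alpisometI}-\eqref{eq:alpisometII}: a direct block-matrix computation shows
\[
\ga_{\C A_\al}(a)\cd\ga_{\C X_\al}(x)=\ga_{\C X_\al}(a\cd x)\quad\text{and}\quad \ga_{\C X_\al}(x)\cd\ga_{\C B_\al}(b)=\ga_{\C X_\al}(x\cd b),
\]
and these identities pass to matrices entry-wise. Since the matrix norms on $\C A_\al$, $\C B_\al$ and $\C X_\al$ are by construction inherited from the operator norm on $M_n(B(H))=B(H^{\oplus n})$, submultiplicativity of the operator norm in $B(H^{\oplus n})$ immediately yields the required bimodule estimates.

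The main step is (ii). Here the chosen constant $C=\|\al_{\C B}\|_{\T{cb}}\cd\|\al_{\C X}^{-1}\|_{\T{cb}}^{2}$ is engineered to do all the work. From \eqref{eq:alpisometI} one reads off
\[
\|b\|_{M_n(\C B_\al)}=\max\bigl\{\|\al_{\C B}(b^\da)\|,\,\|\al_{\C B}(b)\|\bigr\}\leq \|\al_{\C B}\|_{\T{cb}}\cd\|b\|_{M_n(\C B)},
\]
where we used that the involution $\da$ on $\C B$ is completely isometric. From \eqref{eq:alpisometII} one similarly reads off
\[
\|x\|_{M_n(\C X)}\leq \|\al_{\C X}^{-1}\|_{\T{cb}}\cd\|\al_{\C X}(x)\|=\|\al_{\C X}^{-1}\|_{\T{cb}}\cd\|x\|_{M_n(\C X_\al)}.
\]
Chaining these estimates with the original Cauchy-Schwarz inequality available in $\C X$ gives, for every $n\in\nn$ and all $x,y\in M_n(\C X_\al)$,
\[
\|\inn{x,y}_{\C X_\al}\|_{M_n(\C B_\al)}
=\tfrac{1}{C}\|\inn{x,y}_{\C X}\|_{M_n(\C B_\al)}
\leq \tfrac{1}{C}\|\al_{\C B}\|_{\T{cb}}\cd\|x\|_{M_n(\C X)}\cd\|y\|_{M_n(\C X)}
\leq \|x\|_{M_n(\C X_\al)}\cd\|y\|_{M_n(\C X_\al)}.
\]

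There is no genuine obstacle, only careful bookkeeping of the cb-norms: the apparent subtlety is that the norm on $\C B_\al$ involves a maximum over $\al_{\C B}(b)$ and $\al_{\C B}(b^\da)$, but the complete isometry of $\da$ on $\C B$ bounds both halves by $\|\al_{\C B}\|_{\T{cb}}\cd\|b\|_{M_n(\C B)}$, so nothing is lost. The power $2$ in the constant $C$ is exactly what is needed to convert the two factors $\|x\|_{M_n(\C X)}$ and $\|y\|_{M_n(\C X)}$ on the right-hand side back into norms in $\C X_\al$.
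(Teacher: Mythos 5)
Your proof is correct and follows essentially the same route as the paper: the paper's own proof addresses only the Cauchy--Schwarz inequality and establishes it by exactly the chain of estimates you give (reading off $\|\cdot\|_{\C B_\al}$ as a maximum over $\al_{\C B}(b)$ and $\al_{\C B}(b^\da)$, using the complete isometry of $\da$ on $\C B$, the original Cauchy--Schwarz inequality in $\C X$, and the bound $\|x\|_{\C X}\leq\|\al_{\C X}^{-1}\|_{\T{cb}}\|x\|_{\C X_\al}$, with the constant $C$ absorbing everything). Your additional verification of the bimodule estimates via the block-matrix identities is a fine way to handle the parts the paper leaves implicit.
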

\begin{proof}
We only address the Cauchy-Schwarz inequality. This follows from the estimates
\[
\begin{split}
& \| \inn{x,y}_{\C X_\al} \|_{\C B_\al} = \frac{1}{C} \| \ga_{\C B_\al}(\inn{x,y}_{\C X}) \|_\infty \\
& \q \leq \frac{1}{C} \cd \sup \{ \| \al_{\C B}( \inn{y,x}_{\C X} ) \|_\infty \, , \, \, \| \al_{\C B}( \inn{x,y}_{\C X} ) \|_\infty \} \\
& \q \leq \frac{1}{C} \cd \| \al_{\C B} \|_{\T{cb}} \cd \| \inn{x,y}_{\C X} \|_{\C B}
\leq \frac{1}{C} \cd \| \al_{\C B} \|_{\T{cb}} \cd \| x \|_{\C X} \cd \| y \|_{\C X} \\
& \q \leq \frac{1}{C} \cd \| \al_{\C B} \|_{\T{cb}} \cd \| \al_{\C X}^{-1} \|_{\T{cb}}^2 \cd \| \al_{\C X}(x) \|_\infty \cd \| \al_{\C X}(y) \|_\infty
= \| x \|_{\C X_\al} \cd \| y \|_{\C X_\al} \, ,
\end{split}
\]
which are valid for all $x,y \in M_n(\C X_\al)$.
\end{proof}

 Clearly, the two operator $*$-correspondences $\C X$ and $\C X_\al$ are cb-isomorphic in the sense of Definition \ref{d:opstarcor}. The cb-isomorphism is provided by the cb-isomorphisms $\C A \to \C A_\al$ and $\C B \to \C B_\al$ (both induced by the identity map) as well as the cb-isomorphism $\C X \to \C X_\al$ given by multiplication by $\frac{1}{\sqrt{C}}$, where the constant $C > 0$ is defined in the above lemma. Moreover, we have that $\C X_\al$ satisfies the conditions of Assumption \ref{a:faithCES}. Indeed, we may to this end use the complete isometries $\ga_{\C A_\al} : \C A_\al \to B(H)$, $\ga_{\C B_\al} : \C B_\al \to B(H)$ and $\ga_{\C X_\al} : \C X_\al \to B(H)$ defined in \eqref{eq:alpisometI} and \eqref{eq:alpisometII}. The following result is therefore a corollary to Theorem \ref{t:isochar}:

\begin{thm}\label{t:bouchar}
 Let $\C A$ and $\C B$ be operator $*$-algebras and let $\C X$ be an operator $*$-correspondence from $\C A$ to $\C B$. Then there exists a standard form representation $(H_\pi,U,\pi_{\C A}, \pi_{\C B},\pi_{\C X})$ for $\C X$.
\end{thm}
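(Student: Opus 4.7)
The plan is to reduce to the already-established Theorem \ref{t:isochar} via the auxiliary correspondence $\C X_\al$ constructed in the paragraph immediately preceding the theorem. The core observation is that once we know $\C X_\al$ satisfies Assumption \ref{a:faithCES} and is cb-isomorphic to $\C X$ as an operator $*$-correspondence, any standard form representation of $\C X_\al$ transfers to one of $\C X$ by pulling back along the cb-isomorphism.

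Concretely, I would first invoke the preceding construction: applying \cite[Theorem 5.2.17]{BlMe:OMO} yields the completely bounded maps $\al_{\C A}, \al_{\C B}, \al_{\C X}$, and the matrix norms built from $\ga_{\C A_\al}, \ga_{\C B_\al}, \ga_{\C X_\al}$ in \eqref{eq:alpisometI}--\eqref{eq:alpisometII} make $\C A_\al, \C B_\al$ operator $*$-algebras whose canonical representations on $H = H_0 \oplus H_0$ are completely isometric and intertwine with the complete isometry $\ga_{\C X_\al}$ in the sense required by Assumption \ref{a:faithCES}. By the lemma immediately before the theorem, the rescaled inner product $\tfrac{1}{C} \inn{\cdot,\cdot}_{\C X}$ makes $\C X_\al$ an operator $*$-correspondence from $\C A_\al$ to $\C B_\al$. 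Thus the hypotheses of Theorem \ref{t:isochar} are satisfied for $\C X_\al$, yielding a quintuple $(H_\pi', U', \pi_{\C A_\al}', \pi_{\C B_\al}', \pi_{\C X_\al}')$ which is a standard form representation of $\C X_\al$.

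Next I would verify that this quintuple already serves as a standard form representation of $\C X$. The identity maps $\C A \to \C A_\al$ and $\C B \to \C B_\al$ are cb-isomorphisms of operator $*$-algebras, and multiplication by $\tfrac{1}{\sqrt{C}}$ gives a cb-isomorphism $\C X \to \C X_\al$ of operator $*$-correspondences (in the sense of Definition \ref{d:opstarcor}). Setting $\pi_{\C A} := \pi_{\C A_\al}'$, $\pi_{\C B} := \pi_{\C B_\al}'$ and $\pi_{\C X}(x) := \tfrac{1}{\sqrt{C}} \pi_{\C X_\al}'(x)$, together with $H_\pi := H_\pi'$ and $U := U'$, the cb-isomorphism and the scaling factor together ensure that $\C X$ is cb-isomorphic to $\pi_{\C X}(\C X)$, that the module relations of Definition \ref{d:staform}(2) are preserved, that the involution identities on $\C A$ and $\C B$ are inherited verbatim, and that the inner product relation $U \pi_{\C X}(x)^* U \pi_{\C X}(y) = \pi_{\C B}(\inn{x,y}_{\C X})$ holds because the factor $\tfrac{1}{C}$ from the product $\tfrac{1}{\sqrt{C}} \cdot \tfrac{1}{\sqrt{C}}$ exactly absorbs the rescaling in the definition of $\inn{\cdot,\cdot}_{\C X_\al}$.

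There is no serious obstacle: the theorem is genuinely a corollary of Theorem \ref{t:isochar}, with the only bookkeeping being to track the scaling constant $C = \|\al_{\C B}\|_{\T{cb}} \cdot \|\al_{\C X}^{-1}\|_{\T{cb}}^2$ and to check that cb-isomorphism of operator $*$-correspondences transports standard form representations. The mildly delicate point, which I would treat explicitly, is the passage from $\inn{\cdot,\cdot}_{\C X_\al}$ to $\inn{\cdot,\cdot}_{\C X}$ via the factor $\tfrac{1}{\sqrt{C}}$ in $\pi_{\C X}$; everything else is immediate from the definitions.
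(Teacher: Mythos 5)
Your proposal is correct and follows exactly the paper's route: the paper likewise obtains Theorem \ref{t:bouchar} as an immediate corollary of Theorem \ref{t:isochar} applied to the auxiliary correspondence $\C X_\al$, which satisfies Assumption \ref{a:faithCES} and is cb-isomorphic to $\C X$, and it simply leaves the transfer along the cb-isomorphism implicit, which you spell out. The only quibble is the direction of the scaling: since $\inn{\cdot,\cdot}_{\C X_\al} = \tfrac{1}{C}\inn{\cdot,\cdot}_{\C X}$, setting $\pi_{\C X} := \tfrac{1}{\sqrt{C}}\,\pi_{\C X_\al}'$ makes the two factors compound to $\tfrac{1}{C^{2}}$ rather than cancel, so one should take $\pi_{\C X} := \sqrt{C}\,\pi_{\C X_\al}'$ for $U \pi_{\C X}(x)^* U \pi_{\C X}(y) = \pi_{\C B}(\inn{x,y}_{\C X})$ to hold on the nose --- a reciprocal slip that the paper's own description of the cb-isomorphism $\C X \to \C X_\al$ as multiplication by $\tfrac{1}{\sqrt{C}}$ shares.
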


\subsection{The linking algebra of a standard form representation}
Let $\C X$ be an operator $*$-correspondence over a pair $(\C A,\C B)$ of operator $*$-algebras. By Theorem \ref{t:bouchar} we may choose a standard form representation $(H_\pi, U, \pi_{\C A}, \pi_{\C B}, \pi_{\C X})$ of $\C X$. This data then provides us with a completely bounded linear map
\[
\pi_{\C X^{\da}}:\C X^{\da}\to B(H_\pi), \q \pi_{\C X^{\da}}(x^{\da}):=U\pi_{\C X}(x)^{*}U \, .
\]
such that $\C X^\da$ becomes cb-isomorphic (as an operator space) to the image $\pi_{\C X^\da}(\C X^\da) \su B(H_\pi)$. Moreover, we have the relations
\[
\pi_{\C X^\da}(x^\da \cd a) = \pi_{\C X^\da}(x^\da) \cd \pi_{\C A}(a), \q 
\pi_{\C X^\da}(b \cd x^\da) = \pi_{\C B}(b) \cd \pi_{\C X^\da}(x^\da), \q
\pi_{\C X^\da}(x^{\da})\pi_{\C X}(y) = \pi_{\C B}(\langle x,y\rangle) \, .
\]
for all $x, y \in \C X$, $a \in \C A$, $b \in \C B$. In particular, we obtain a completely bounded algebra homomorphism
\[
\pi_{\mathbb{K}(\C X)} : \mathbb{K}(\C X) \to B(H_\pi), \q \pi_{\mathbb{K}(\C X)}(x \ot y^\da) := \pi_{\C X}(x) \cd \pi_{\C X^\da}(y^\da) \, .
\]

In order to ease the notation we will sometimes omit the subscripts from the various maps $\pi_{\C A}$, $\pi_{\C B}$, $\pi_{\C X}$, $\pi_{\C X^\da}$, $\pi_{\mathbb{K}(\C X)}$.

\begin{dfn}\label{d:link} Let $\C A$ and $\C B$ be operator $*$-algebras and let $\C X$ be an operator $*$-correspondence from $\C A$ to $\C B$. For a standard form representation $\pi := (H_\pi, U, \pi_{\C A}, \pi_{\C B}, \pi_{\C X})$ of $\C X$ we define the \emph{linking algebra} $\mathscr{L}_{\pi}(\C X) \su B( H_\pi \op H_\pi)$ as the operator norm closure of the subalgebra
\[
\ma{cc}{ \pi_{\C B}(\C B) & \pi_{\C X}(\C X^\da) \\ \pi_{\C X}(\C X) & \pi_{\C A}(\C A) + \pi_{\mathbb{K}(\C X)}( \mathbb{K}(\C X)) }
\su B(H_\pi \op H_\pi) \, . 
\]

\end{dfn}

Clearly, the linking algebra $\sL_\pi(\C X) \su B(H_\pi \op H_\pi)$ is an operator algebra. Moreover, $\mathscr{L}_{\pi}(\C X)$ admits a $*$-structure turning it into an operator $*$-algebra: 

\begin{lemma} Let $\pi$ be a standard form representation of $\C X$. The involution 
\[
\begin{pmatrix}
\pi(b)& \pi(y^\da) \\ \pi(x) & \pi(a)+ \pi(K) \end{pmatrix}^{\da}
:= \begin{pmatrix} U & 0 \\ 0 & U\end{pmatrix}\begin{pmatrix}\pi(b)& \pi(y^\da) \\ \pi(x) & \pi(a)+\pi(K)\end{pmatrix}^{*} \begin{pmatrix} U & 0 \\ 0 & U\end{pmatrix} \, ,
\]
makes the linking algebra $\mathscr{L}_{\pi}(\C X)$ into an operator $*$-algebra. The triple $\left( H_\pi \op H_\pi, \io, \ma{cc}{U & 0 \\ 0 & U}\right)$, where $\io : \mathscr{L}_{\pi}(\C X) \to B( H_\pi \op H_\pi)$ denotes the inclusion, is a standard form representation of $\sL_{\pi}(\C X)$.
\end{lemma}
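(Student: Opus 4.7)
The plan is to verify that conjugation by the selfadjoint unitary $V := \ma{cc}{U & 0 \\ 0 & U} \in B(H_\pi \op H_\pi)$ combined with the Hilbert space adjoint preserves the linking algebra $\sL_\pi(\C X)$, and that its restriction coincides with the involution $\da$ in the statement. Since $V$ is a selfadjoint unitary, the operation $T \mapsto V T^* V$ is a conjugate-linear, anti-multiplicative, involutive and completely isometric map on $B(H_\pi \op H_\pi)$, so once I know that $\sL_\pi(\C X)$ is invariant under this operation, its restriction automatically endows $\sL_\pi(\C X)$ with the structure of an operator $*$-algebra.

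The main computation is on the dense subalgebra described in Definition \ref{d:link}. From item $(3)$ of Definition \ref{d:staform} I immediately obtain $U \pi_{\C A}(a)^* U = \pi_{\C A}(a^\da)$ and $U \pi_{\C B}(b)^* U = \pi_{\C B}(b^\da)$ for the diagonal entries. For the off-diagonal entries I use the definition $\pi_{\C X^\da}(x^\da) := U \pi_{\C X}(x)^* U$, which gives both $U \pi_{\C X}(x)^* U = \pi_{\C X^\da}(x^\da)$ and $U \pi_{\C X^\da}(y^\da)^* U = \pi_{\C X}(y)$ (using $U^2 = 1$). The only subtle piece is the summand $\pi_{\mathbb{K}(\C X)}(K)$ in the $(2,2)$-entry; for an elementary tensor $K = x \otimes_{\C B} y^\da$ I compute
\[
U \bigl(\pi_{\C X}(x)\,\pi_{\C X^\da}(y^\da)\bigr)^* U = \pi_{\C X}(y)\,\pi_{\C X^\da}(x^\da) = \pi_{\mathbb{K}(\C X)}\bigl(y \otimes_{\C B} x^\da\bigr) = \pi_{\mathbb{K}(\C X)}(K^\da),
\]
and then extend by linearity to all of $\mathbb{K}(\C X)$. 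Assembling these four ingredients shows that $V T^* V$ is again of the matrix form displayed in Definition \ref{d:link} whenever $T$ is, and passing to the operator norm closure gives the invariance of $\sL_\pi(\C X)$ under $V(\cdot)^* V$. By construction the induced involution agrees with $\da$ on the dense subalgebra.

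Consequently $\da$ is a well-defined, completely isometric involution making $\sL_\pi(\C X)$ an operator $*$-algebra. The second assertion is then essentially tautological: $\iota$ is the inclusion of a closed subalgebra into $B(H_\pi \op H_\pi)$ and is therefore a completely isometric algebra homomorphism onto its image, $V$ is a selfadjoint unitary, and the defining formula of $\da$ rearranges to $\iota(T)^* = V \iota(T^\da) V$ for all $T \in \sL_\pi(\C X)$. This is exactly the conclusion of Proposition \ref{p:invo} applied to the operator $*$-algebra $\sL_\pi(\C X)$, so $(H_\pi \op H_\pi, \iota, V)$ is a standard form representation. I expect no substantive obstacle; the only delicate step is the compatibility between the abstract involution on $\mathbb{K}(\C X)$ and the Hilbert space adjoint via the symmetry $U$, which is resolved by the short computation displayed above.
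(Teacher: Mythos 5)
Your proposal is correct and follows essentially the same route as the paper: the paper's proof simply records that conjugation by $\ma{cc}{U & 0 \\ 0 & U}$ composed with the adjoint sends the generating matrix to $\ma{cc}{\pi(b^\da) & \pi(x^\da) \\ \pi(y) & \pi(a^\da) + \pi(K^\da)}$ and declares this "straightforward to verify," which is exactly the entrywise computation you carry out (including the slightly delicate $\pi_{\mathbb{K}(\C X)}$ term, which you handle correctly via $U(\pi_{\C X}(x)\pi_{\C X^\da}(y^\da))^*U = \pi_{\C X}(y)\pi_{\C X^\da}(x^\da)$).
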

\begin{proof} The identity
\[
\begin{pmatrix} U & 0 \\ 0 & U\end{pmatrix}\begin{pmatrix}\pi(b)& \pi(y^\da) \\ \pi(x) & \pi(a)+ \pi(K)\end{pmatrix}^{*} \begin{pmatrix} U & 0 \\ 0 & U\end{pmatrix}=\begin{pmatrix} \pi(b^{\da}) & \pi(x^\da) \\ \pi(y) & \pi(a^{\da})+ \pi(K^\da)\end{pmatrix},
\]
is straightforward to verify and proves the lemma.
\end{proof}

Our results show that one can always construct operator $*$-algebras containing a given operator $*$-correspondence. Conversely, we will now show that corners in a standard form representation of an operator $*$-algebra are always operator $*$-correspondences.

The following result should be compared to \cite[Theorem 5.5]{Ble:Gen} as well as to the (now classical) result on Hilbert $C^{*}$-modules \cite[Theorem 1.1]{BGR}. 
 
\begin{thm} Let $\C L$ be an operator $*$-algebra and let $(H,\pi,U)$ be a standard form representation of $\C L$. Moreover, let $P\in B(H)$ be an orthogonal projection such that
\begin{enumerate}
\item $PU=UP$;
\item $P\pi(\C L) \, , \, \, \pi(\C L)P\su \pi(\C L)$.

\end{enumerate}
Then the closed subalgebras $P\pi(\C L)P$ and $\C B:=(1-P)\pi(\C L)(1-P)$ of $B(H)$ are operator $*$-algebras in the involution $T^{\da}:=UT^{*}U$, $T \in B(H)$. Furthermore, for any closed subalgebra $\C A\su P\pi(\C L)P$ with $\C A^\da = \C A$, the closed subspace $\C X:=P\pi(\C L )(1-P)\su B(H)$ is an operator $*$-correspondence from $\C A$ to $\C B$ with bimodule structure coming from the algebra structure on $B(H)$ and with the inner product
\begin{equation}\label{eq:innercorner}
\inn{S,T}_{\C X} := S^\da \cd T := U S^* U T, \q S, T \in \C X \, .
\end{equation}
Moreover, up to cb-isomorphism, any operator $*$-correspondence arises in this way.
\end{thm}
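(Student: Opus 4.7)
The plan is to establish the three assertions in sequence, deriving the converse from the linking algebra construction already set up in this section. For the first assertion, I would observe that $T \mapsto UT^*U$ is an antilinear complete isometry of $B(H)$, and it preserves $\pi(\C L)$ since $(H,\pi,U)$ is a standard form representation of $\C L$. Combined with $PU = UP$, this yields $(PaP)^\da = P(Ua^*U)P \in P\pi(\C L)P$ for $a \in \pi(\C L)$, and similarly $\C B^\da \su \C B$. Complete contractivity of multiplication and of the involution on these closed subspaces is inherited from $B(H)$, so both $P\pi(\C L)P$ and $\C B$ are operator $*$-algebras.

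Next I would verify the operator $*$-correspondence axioms for $\C X = P\pi(\C L)(1-P)$. Condition (2) together with associativity in $B(H)$ yields the bimodule inclusions $\C A \cd \C X \su \C X$ and $\C X \cd \C B \su \C X$. For $S,T \in \C X$, the equality $PU = UP$ together with the $\da$-stability of $\pi(\C L)$ forces $US^*U \in (1-P)\pi(\C L)P$, and hence $\inn{S,T}_{\C X} = US^*UT \in \C B$. Conjugate symmetry, sesquilinearity and $\C B$-linearity in the second slot are immediate from the formula $\inn{S,T}_{\C X} = US^*UT$, while the Cauchy-Schwarz estimate at matrix level follows from the identity $\inn{S,T}_{\C X} = U^{(n)} S^* U^{(n)} T$ in $B(H^n)$, where $U^{(n)} = \T{diag}(U,\ldots,U)$, combined with submultiplicativity of the operator norm and the complete isometry of the ambient involution. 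Compatibility with the left $\C A$-action reduces to the manipulation $\inn{aS,T}_{\C X} = US^*a^*UT = US^*U a^\da T = \inn{S,a^\da T}_{\C X}$.

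For the converse, given an arbitrary operator $*$-correspondence $\C X$ from $\C A$ to $\C B$, the plan is to apply Theorem \ref{t:bouchar} to obtain a standard form representation $(H_\pi,U,\pi_{\C A},\pi_{\C B},\pi_{\C X})$ and to form the linking algebra $\mathscr{L}_\pi(\C X) \su B(H_\pi \op H_\pi)$ of Definition \ref{d:link}, which was already shown to admit the standard form representation $(H_\pi \op H_\pi, \io, \ma{cc}{U & 0 \\ 0 & U})$. Taking $P = \ma{cc}{0 & 0 \\ 0 & 1}$, conditions (1) and (2) are immediate from the explicit block structure of $\mathscr{L}_\pi(\C X)$. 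A direct block-matrix computation then identifies the corner $P\,\mathscr{L}_\pi(\C X)\,(1-P)$ with $\pi_{\C X}(\C X) \cong \C X$, identifies $(1-P)\mathscr{L}_\pi(\C X)(1-P)$ with $\pi_{\C B}(\C B) \cong \C B$, and exhibits $\pi_{\C A}(\C A)$ as a $\da$-closed subalgebra of $P\,\mathscr{L}_\pi(\C X)\,P = \pi_{\C A}(\C A) + \pi_{\mathbb{K}(\C X)}(\mathbb{K}(\C X))$.

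The main obstacle is not the mechanical bookkeeping but the verification that the inner product obtained from the corner construction reproduces the original inner product on $\C X$ up to cb-isomorphism. This is exactly where the defining relation $U\pi_{\C X}(x)^*U\pi_{\C X}(y) = \pi_{\C B}(\inn{x,y})$ of the standard form in Definition \ref{d:staform} is indispensable: without the symmetry $U$ implementing both the involutions and the inner product, the corner of a merely completely bounded bimodule representation would recover only the operator bimodule structure, not its hermitian content.
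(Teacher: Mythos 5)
Your proposal is correct and follows essentially the same route as the paper: the corner algebras inherit the involution $T \mapsto UT^*U$ because $P$ commutes with $U$ and $\pi(\C L)$ is $\da$-stable, the correspondence axioms are checked directly from the block identity $US^*UT = (1-P)\pi(x^\da)P\pi(y)(1-P)$, and the converse is obtained from the linking algebra of a standard form representation with $P = \bigl(\begin{smallmatrix}0&0\\0&1\end{smallmatrix}\bigr)$. The only difference is that you spell out the matrix-level Cauchy--Schwarz estimate and the left-action compatibility, which the paper leaves as ``clear''.
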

\begin{proof} 
By construction, the closed subalgebra $\pi(\C L) \su B(H)$ is an operator $*$-algebra in the involution $\pi(x)^\da := U \pi(x)^* U = \pi(x^\da)$, $x \in \C L$, and $\pi : \C L \to \pi(\C L)$ is then a cb-isomorphism of operator $*$-algebras. It is then immediate from condition (1) and (2) that the closed subalgebras $P \pi(\C L) P$ and $\C B = (1 - P) \pi(\C L) (1 - P)$ of $\pi(\C L)$ are operator $*$-algebras when given the involution inherited from $\pi(\C L)$. Hence any closed subalgebra $\C A \su P \pi(\C L) P$ with $\C A = \C A^\da$ is also an operator $*$-algebra.

The operator space $\C X = P \pi(\C L) (1 - P) \su \pi(\C L)$ clearly becomes an operator $\C A$-$\C B$-bimodule when equipped with the bimodule structure inherited from the algebra structure in $\pi(\C L)$. Moreover, the pairing defined in \eqref{eq:innercorner} takes values in $\C B$ since
\[
\begin{split}
\inn{P \pi(x) (1 - P), P \pi(y) (1 - P)}_{\C X} 
& = U ( P \pi(x) (1 - P))^* U P \pi(y) (1 - P)  \\
& = (1 - P) \pi(x^\da) P \pi(y) (1 - P) ,
\end{split}
\]
for all $x,y \in \C X$. It is then clear that $\C X$ is an operator $*$-correspondence from $\C A$ to $\C B$.

Conversely, suppose $\C X$ is an operator $*$-correspondence from $\C A$ to $\C B$ and let $\pi = (H_\pi,V, \pi_{\C A}, \pi_{\C B},\pi_{\C X})$ be a standard form representation of $\C X$, which exists by Theorem \ref{t:bouchar}. The linking algebra $\mathscr{L}_{\pi}(\C X)\su B(H_\pi \oplus H_\pi)$, the unitary and the projection 
\[
U:=\begin{pmatrix} V & 0 \\0 & V \end{pmatrix}\in B(H_\pi \oplus H_\pi ),\q P:=\begin{pmatrix} 0 & 0 \\0 & 1 \end{pmatrix}\in B(H_\pi \oplus H_\pi) \, ,
\]
satisfy conditions (1) and (2). Moreover, $\C X$ is cb-isomorphic (as an operator space) to $P\mathscr{L}_{\pi}(\C X)(1-P)$, $\C B$ is cb-isomorphic to $(1-P)\mathscr{L}_{\pi}(\C X)(1-P)$ (as an operator $*$-algebra) and $\C A$ is cb-isomorphic to a closed $*$-subalgebra of $P\mathscr{L}_{\pi}(\C X) P$ (as an operator $*$-algebra), and these cb-isomorphisms can be chosen to be compatible in the sense described after Definition \ref{d:opstarcor}.
\end{proof}

We note that due to the lack of an appropriate notion of adjointable operators on operator $*$-modules, we are confined, in the above theorem, to a formulation involving standard form representations. A more intrinsic characterization of the linking operator $*$-algebra is the subject of further investigations and requires a better understanding of the (a priori) complicated representation theory of the operator $*$-algebra $\mathbb{K}(\C X)$.

\bibliography{JK}
\bibliographystyle{amsalpha-lmp}

\end{document}